\newtheorem{prop}{Proposition}[section]
\newtheorem{coro}[prop]{Corollary}
\newtheorem{defi}[prop]{Definition}
\newtheorem{exam}[prop]{Example}
\newtheorem{lemm}[prop]{Lemma}
\newtheorem{pf-thm}[prop]{proof of theorem}
\newtheorem{rema}[prop]{Remark}
\newtheorem{theo}[prop]{Theorem}
\newtheorem*{ack}{Acknowledgments}
\def\cC{\mathcal{C}}
\def\cH{\mathcal{H}}
\def\cJ{\mathcal{J}}
\def\cK{\mathcal{K}}
\def\cM{\mathcal{M}}
\def\cO{\mathcal{O}}
\def\cW{\mathcal{W}}
\def\cY{\mathcal{Y}}
\def\cX{\mathcal{X}}
\def\cZ{\mathcal{Z}}
\def\cYM{\mathcal{YM}}
\def\CC{\mathbb{C}}
\def\RR{\mathbb{R}}
\def\dd{\mathrm{partial}}
\def\diff{\mathrm{Diffeo}}
\def\Emap{\mathrm{EMap}}
\def\End{\mathrm{End}}
\def\ham{\mathrm{Ham}}
\def\Kmap{\mathrm{KMap}}
\def\map{\mathrm{Map}}
\def\proj{\mathrm{Proj}}
\def\psh{\mathrm{PSH}}
\def\Ric{\mathrm{Ric}}
\def\sym{\mathrm{Sym}}
\def\tr{\mathrm{Tr}}
\def\vol{\mathrm{vol}}
\def\Ad{\mathrm{Ad}}
\def\dbar{\bar \partial}
\def\dd{\partial}
\newcommand{\ddbar}{\sqrt{-1}\partial\bar\partial}
\begin{document}

\title{Moment map for coupled equations of K{\"a}hler forms and curvature}

\author{King Leung Lee}
\address{Instituto de Ciencias Matem\'{a}ticas, ICMAT, C. Nicol\'{a}s Cabrera, 13-15, 28049 Madrid, Spain}
\email{king.lee@icmat.es}

\date{\today}
\maketitle
\begin{abstract}
In this paper we introduce two new systems of equations in K{\"a}hler geometry: The coupled p equation and the generalized coupled cscK equation. We motivate the equations from the moment map pictures, prove the uniqueness of solutions and find out the obstructions to the  solutions for the second equation. We also point out the connections between the coupled cscK equation, the coupled K{\"a}hler Yang-Mills equations and the deformed Hermitian Yang-Mills equation.

Moreover, using this moment map, we can show the Mabuchi functional for the generalized coupled cscK equation, and a special case of the coupled K{\"a}hler Yang-Mills equations and the deformed Hermitian Yang-Mills equation are convex along the smooth geodesic, which is different from the one using the moment map picture from the gauge group. In our case, the geodesic is given by the natural metric on the product of smooth K\"{a}hler potential $\cK(X,\omega_0)\times \cdots \times \cK(X,\omega_k)$.
\end{abstract}

\section{Introduction}
\subsection{Motivation}
Over the years, many important equations in complex geometry have been given moment map interpretations. A few examples of equations with moment map interpretations are the cscK equation (\cite{Don00} and \cite{Fuj92}), the coupled K{\"a}hler Yang-Mills equation (\cite{AGG13}) and the coupled constant scalar curvature (\cite{DaPi19}). In this paper, we combine the moment maps for the latter two together with some new ideas to define a new type of canonical metrics. We begin by recalling the definition of the coupled Kähler-Yang-Mills equation.

\begin{defi}[\cite{AGG13}]
Let $P$ be a principal $U(k)$ bundle on a K{\"a}hler manifold $(X,\omega_X)$,  $A$ be a connection on $P$, and $F_A$ be the curvature which is an $Lie(G)$-valued 2 form. Then the coupled K{\"a}hler equation is given by 
\[\begin{matrix}
\alpha_0S_{g}+\alpha_1 \bigwedge^2 F_A\wedge F_A &=& c\\
\bigwedge F_A &=& z
\end{matrix}\]
where $z\in Lie(G)$ is invariant under the adjoint $U(k)$ action and $c$ is a constant, which depended on the topological constraint on $P$ and $\alpha_0,\alpha_1,[\omega]$. Also, we need the integrability condition
\[F_A^{0,2} = 0.\]
\end{defi}
If  $P=U(1)^k$, then the Lie algebra is $u(1)\oplus\cdots \oplus u(1)$, and the $F_A$ can be represented as \[F_A=\omega_1+\cdots+\omega_k,\] where $\omega_1,...,\omega_k$ are $L$-valued K{\"a}hler forms on $X$, which can be realized as K{\"a}hler forms on $X$. In this special case,  the moment map equation is given by 
\[\begin{matrix}
\displaystyle\alpha_0S_{g}+\alpha_1 \sum_{i=1}^k\dfrac{\omega_i^{2}\wedge \omega_0^{n-2}}{\omega_0^n}&=& c_0\\
\tr_{\omega_0}(\omega_1)&=&c_1\\
\vdots &=&\vdots\\
\tr_{\omega_0}(\omega_k)&=&c_k.
\end{matrix}.\]
 In \cite{HuNy16},   Hultgren and Witt Nystr\"{o}m introduced another type of canonical metrics: the coupled K{\"a}hler Einstein equation. This equation was later generalized by Datar and Pingali (\cite{DaPi19}) to the coupled cscK equation:
 \begin{defi}[\cite{DaPi19}]
 Let $X$ be a K{\"a}hler manifold and $\omega_0,...,\omega_k$ be K{\"a}hler forms on $X$, and let $\displaystyle \omega=\sum_{i=0}^k\omega_i$. Then the coupled cscK equation is given by 
 \[\begin{matrix}
 \dfrac{\omega_0^n}{\vol(\omega_0)}=\cdots= \dfrac{\omega_k^n}{\vol(\omega_k)}\\
 S_{\omega_0}=\tr_{\omega_0}\omega+c,
 \end{matrix}\]
 here $c$ is the topological constant depending on the class of $\omega_i$ and $\Ric(\omega)$. If $c=0$, then this reduces to the coupled K{\"a}hler-Einstein equation. 
 \end{defi}
 
 Both the cKYM equation and the ccscK equation are moment map equations. For both setups, the domains are in a subspace of $\cY\subset\cJ\times \mathcal{A}$, which for each $(J,A)$, $A\in \Omega_J^{1,1}(ad(P))$ (See \cite{AGG13}, \cite{DaPi19}). Notice that in order to get the topological constraint, the setups are in the complexifed orbit. But we can study deformation of ths solutions if we consider the bigger subspace $\cY$. When $P$ is a  principal $U(1)$ bundle, the moment map for the coupled K{\"a}hler Yang-Mills equation is 
 \begin{align*}
\mu_{cKYM}(J,A)(\varphi,\xi)=&\int_X\varphi\left(c-S(J)-\alpha_2\dfrac{\omega_X^{n-2}\wedge\omega_A^2}{\omega_X^n}+\alpha_1\dfrac{\omega_X^{n-1}\wedge\omega_A}{\omega_X^n}\right)\dfrac{\omega_X^n}{n!}\\
&+\int_X\theta_A\xi \wedge\left(\alpha_1z-\alpha_2\dfrac{\omega_X^{n-1}\wedge\omega_A}{\omega_X^n}\right)\omega_X^n;\end{align*}
 and the moment map for coupled cscK equation is 
 \[\mu_{ccscK}(J,A)(H_{\xi,0}, H_{\xi,A})=\int_XH_{\xi,0}\left(c-S(J)+\dfrac{\omega_X^{n-1}\wedge\omega_A}{\omega_X^n}\right)+\int_X H_{\xi,A} \left(\dfrac{\omega_A^n}{\omega_X^n}-z\right)\]
 \\
 
We will now explain how both these moment maps can be constructed using the moment map for the cscK metrics together with a new construction involving maps between two symplectic manifolds  $(X,\omega_X)$ and $(Y,\omega_Y)$ which are diffeomorphic to each others. 
\begin{defi}[Defintion \ref{eqn: main moment map}]
 We denote the map
 \[\mu_p:\map(X,Y;p)^+ \rightarrow Lie(\ham(X,\omega_X)\times \ham(Y,\omega_Y))^*\] by
\begin{equation}
\mu_{p,\omega_X\omega_Y}(f)=\dfrac{n}{n-p}\left(c_1\dfrac{\omega_X^n}{n!}-\dfrac{\omega_X^{n-p-1}\wedge f^*\omega_Y^{p+1}}{(n-p-1)!(p+1)!},c_2\dfrac{\omega_Y^n}{n!}- \dfrac{f_*\omega_X^{n-p}\wedge\omega_Y^p}{(n-p)!p!}\right),
\end{equation}
where $\displaystyle c_1=\dfrac{\int_X\omega_X^{n-p-1}\wedge f^*\omega_Y^{p+1}}{\int_X\omega_X^n}$, $\displaystyle c_2=\dfrac{\int_Yf_*\omega_X^{n-p}\wedge\omega_Y^p}{\int_Y\omega_Y^n}$. 
\end{defi}

We also have the classical moment map: Denote $\cJ_{int}(X)$ be the space of all integrable almost complex structure, and  let \[\cJ(X,\omega_0):=\{J \in \cJ_{int}(X)| \omega_0(\bullet, \bullet)= \omega_0(J\bullet, J\bullet), \omega_0(\bullet, J\bullet)>0\}\] be the space of integrable almost complex structure compactible with $\omega_0$. The metric $g_J:=\omega(\bullet, J\bullet)$ induces a pairing on $T_J\cJ(X,\omega_0)$, and \[\Omega_{\cJ}(\delta_1J,\delta_2J):=\int_X g_J(\delta_1J,\delta_2J)\dfrac{\omega^n}{n!}.\] Then the map
\[\mu_{\cJ}(J)=(S_{J}-\underline{S_J})\dfrac{\omega_0^n}{n!}=\Ric(X,J)\wedge\dfrac{\omega_0^{n-1}}{(n-1)!}-S\dfrac{\omega_n}{n!}\] is a moment map corresponding to $(\cJ(X,\omega_0), \Omega_{\cJ})$ (see \cite{Don97}, \cite{Fuj92}), where 
\[\underline{S_J}=\dfrac{1}{Vol(X,\omega^n)}\int_X S_J\dfrac{\omega_0^n}{n!}\] is the average of $S_J$.\\  

As $X$ is diffeomorphic to $Y$, if we take $\omega_A=f^*\omega_Y$, then under suitable domain, 
\begin{enumerate}
	\item $\mu_{cKYM}(J,A)=0$ iff $\mu_{\cJ}(J)+c_1\mu_1^*(f)-c_2\mu_0^*(f)=0$ for some suitable constant $c_1,c_2$;
	\item $\mu_{ccscK}(J,A)=0$ iff $\mu_{\cJ}(J)+c\mu_0(f)=0$ for some suitable constant $c$.
\end{enumerate}

Notice that with a suitable choice of domain and symplectic form, the sum of two moment maps can also be a moment map.  Therefore, we unify the ccscK equations and coupled K{\"a}hler Yang Mills equation into one general moment map setup, namely,  the sum of different moment maps $\mu_p$ with the standard moment map $\mu_{\cJ}$. Moreover, using the same idea, we reconstruct the moment map for deformed Hermitian Yang Mills equation (dHYM) (see \cite{CXY17}) and the coupled dHYM (\cite{ScSt19}) in section 3.4. \\

\subsection{Construction}
We will now explain how to choose the symplectic form, the domain and the range to make the sum of two moment maps a moment map in general by considering the construction for the moment map $\mu_{\cJ}+\mu_0$, i.e, the moment map for ccscK, as an example.
\begin{enumerate}
	\item[Step 1] Define \[\mu_{\cJ,0}: \cJ(X,\omega_0)\times\map(X,X)\rightarrow  Lie(\ham(X,\omega_0))^*\oplus Lie(\ham(X,\omega_0)\times \ham(X,\omega_1))^*\] by 
	\[\mu_{\cJ,0}(J,f):=(\mu_{\cJ},\mu_0).\] We need to show that this is a moment map for the K{\"a}hler form \[\Omega_{\cJ,0}:=\Omega_{\cJ}+\Omega_0\]  For this moment map, the range contains more equations than we want, and the domain $J$ and $f$ has no relation. We will fix this issue by the following steps.\\
	\item[Step 2] Consider the subgroup $H\cong \ham(X,\omega_0)\times \ham(X,\omega_1)$,  and the embedding map 
	$\iota: H\rightarrow  \ham(X,\omega_0)\times \ham(X,\omega_0)\times \ham(X,\omega_1)$ by 
	\[\iota(\sigma, \eta)=(\sigma^{-1},\sigma, \eta)\] It induces a map
	$\iota^*: Lie(\ham(X,\omega_0)\times\ham(X,\omega_0)\times \ham(X,\omega_1))^*\rightarrow Lie(H)^*$,  and the map
	\[\iota^*\circ\mu_{\cJ,0}=\left(\dfrac{\omega_0^{n-1}\wedge(-\Ric(\omega_0,J)+f^*\omega_1-c_1\omega_0)}{(n-1)!}, \dfrac{f_*\omega_0^n-c_2\omega_1^n}{n!}\right)\] is also a moment map.
	\item[Step 3] In order to make sure the solution indeed is K{\"a}hler, we consider the subspace 
	\[\cY_{0}:=\{(J,f)| Df J Df^{-1}\in \cJ(X,\omega_1)\},\] and we need to show that this space has the following properties:
	\begin{enumerate}
		\item It is closed under the action of $H$;
		\item It is a smooth manifold. If we want the solutions to be K{\"a}hler, we need this space to be a K{\"a}hler manifold. 
	\end{enumerate} 
Then $f^*\omega_1$ is $J$ invariant and hence it is a K{\"a}hler form. But our theory also need the domain to be the complexified orbit space $H^{\CC}\cdot (J_0,f_0)$. Notice that this space is equivalent to 
\[\{(\varphi_0, \varphi_1)\in \map(X,X)| \varphi_i^*\omega_i=\omega_i+\ddbar h_i\text{ for some }h_i\in PSH(X,\omega_i), i=0,1\}.\] We will show that the solution of $\iota^*\circ \mu_{\cJ,0}|_{H^{\CC}\cdot (J_0,f_0)}=0$ is equivalent to the solution of ccscK equation in the K{\"a}hler class $[\omega_0], [\omega_1]$. 
\end{enumerate}
\begin{rema}
	\begin{enumerate}
		\item  we may also choose $\Omega_{\cJ,0;a_1,a_2}:=a_1\Omega_{\cJ}+a_2\Omega_0$ in step 1 for some positive number $a_1,a_2$ to affect the constant of the outcome moment map, that is,
		\[\iota^*\circ\mu_{\cJ,0}=\left(\dfrac{\omega_0^{n-1}\wedge(-a_1\Ric(\omega_0,J)+a_2f^*\omega_1+c_1'\omega_0)}{(n-1)!}, \dfrac{a_2f_*\omega_0^n-c_2'\omega_1^n}{n!}\right)\] but $a_1,a_2$ need to be positive so that $\Omega_{\cJ,0}$ is still a symplectic (or K{\"a}hler form if it is $J$ invariant).
		\item  Notice that the embedding is not unique. For example ,it may also be $(\sigma, \sigma ,\eta)$, $(\sigma, \sigma^{-1} ,\eta)$ or $(\sigma^{-1}, \sigma ,\eta^{-1})$ . These embedding change part of the sign of the moment map. For example,  if we change the embedding to be $(\sigma, \sigma ,\eta)$, then the moment map becomes
		\[\iota^*\circ\mu_{\cJ,0}=\left(\dfrac{\omega_0^{n-1}\wedge(-a_1\Ric(\omega_0,J)-a_2f^*\omega_1-c_1'\omega_0)}{(n-1)!}, \dfrac{a_2f_*\omega_0^n-c_2'\omega_1^n}{n!}\right).\]
		\item Notice that if $(J,f)\in \cY_0$, then $(J^{-1},f)=(-J,f)\in \cY_0$. This implies the above choices of embedding won't affect the space $\cY_0$. However, the corresponding Mabuchi functional will be the same.
	\end{enumerate}
\end{rema}
 In general, the main technical part for this set up is to find the correct domain space (which is $\cY_0$ here). We need a space that is closed under the action and is a K{\"a}hler submanifold. In section \ref{sec: Kahler coupled equation p}, we will discuss the difficulties of finding the suitable complex submanifold of $\map(X,X)$ for the moment map $\mu_p$.\\

Similarly, for coupled K{\"a}hler Yang-Mills equation, we first construct \[\mu_{01}=(\mu_{\cJ}+a_1\mu_0^*-a_2\mu_1^*)|_{cKYM_{01}}=\iota^*\circ (\mu_{\cJ},\mu_0^*,\mu_1^*)|_{\cYM_{01}}\] using step 1 and step 2 with a suitable embedding restricted in a suitable subspace $\cYM_{01}^+$. The subspace we take in step 3 should be \[\cYM_{01}\subset \{(J,f,g)\in \cJ(X,\omega_0)\times\map(X_1,X_0;n-2)^+\times \map(X_1,X_0;n-1)^+| g= f^{-1}, Df J Df^{-1}\in \cJ(X,\omega_1)\},\] such that 
\[\Omega_{\cJ,01;\alpha_0,\alpha_1,\alpha_2}=\alpha_0\Omega_{\cJ}-\alpha_1\Omega_0^*+\alpha_2\Omega_1^*>0\}.\] 
Here $\mu^*$ and $\Omega^*$ are defined in Definition \ref{dual moment map}, as we need
\[\int_X (\alpha_1\omega_1\wedge \omega_0^{[n-1]}-\alpha_2\omega_0^{[n]})=0,\] where $\omega^{[k]}=\dfrac{\omega^k}{k!}$. If we take the undual one, $\alpha_0\Omega_{\cJ}-\alpha_1\Omega_0+\alpha_2\Omega_1$ must not be positive.

\subsection{Main result}
To discuss the main result, We first define the coupled p equation. 
\begin{defi}[coupled p equation]\label{eqn: main equation}
	Let $(X,\omega_X) ,(Y,\omega_Y)$ be symplectic manifolds which are diffeomorphic to each other, $0\leq p\leq n-1$ and let $\map(X,Y;p)^+$ be the space of diffeomorphism such that $\omega_X^{n-p}\wedge f^*\omega_Y^p$ is a volume form (see definition (\ref{def: map(X,Y;p)^+})). Then the { couple equation p} is given by \[\mu_p=0,\] where $\mu_p:\map(X,Y;p)^+\rightarrow Lie(\ham(X,\omega_X)\times \ham(Y,\omega_Y))^*$ is defined by
\begin{equation*}
\mu_p(f):=\left(
c_1\dfrac{\omega_X^n}{n!}-\dfrac{\omega_X^{n-p-1}\wedge f^*\omega_Y^{p+1}}{(n-p-1)!(p+1)!}, \quad
c_2\dfrac{\omega_Y^n}{n!}-\dfrac{f_*\omega_X^{n-p}\wedge\omega_Y^{p}}{(n-p)!p!}
\right), 
\end{equation*}
with $c_1,c_2\in \RR$  such that 
\begin{equation*}
\begin{matrix}\displaystyle
\int_X\dfrac{\omega_X^{n-p-1}\wedge f^*\omega_Y^{p+1}}{(n-p-1)!(p+1)!}&=&\displaystyle c_1\int_X\dfrac{\omega_X^n}{n!};\\ \displaystyle
\int_Y\dfrac{f_*\omega_X^{n-p}\wedge\omega_Y^{p}}{(n-p)!p!}&=&\displaystyle c_2\int_Y\dfrac{\omega_Y^n}{n!}.
\end{matrix} 
\end{equation*}
\end{defi}

After that, we will study  the procedure of combining the moment maps $\mu_{p}$ and $\mu_{\cJ}$ by
a special case which we call the generalized ccscK equation:
 \begin{defi}
 	Let $X$ be a compact K{\"a}hler manifold with K{\"a}hler forms $\omega_0,...,\omega_k$. Then we define the generalised ccscK equation to be the following:  
 	\[\left\{\begin{matrix} \displaystyle
 	\sum_{i=0}^k\left(\dfrac{\omega_{i,\varphi_i}^{p_i+1}}{(p_i+1)!}\wedge \dfrac{\omega_{0,\varphi_0}^{n-p_i-1}}{(n-p_i-1)}\right)-\Ric(\omega_{0,\varphi_0}, J_0)\wedge\dfrac{\omega_0^{n-1}}{(n-1)!}-c_0\dfrac{\omega_{0,\varphi_0}^n}{n!}&=&0\\
 	\dfrac{\omega_{0,\varphi_0}^{n-p_1}}{(n-p_1)!}\wedge \dfrac{\omega_{1,\varphi_1}^{p_1}}{p_1!}-c_1\dfrac{\omega_{1,\varphi_1}^n}{n!}&=&0\\
 	\vdots\\
 	\dfrac{\omega_{0,\varphi_0}^{n-p_k}}{(n-p_k)!}\wedge \dfrac{\omega_{k,\varphi_k}^{p_k}}{p_k!}-c_k\dfrac{\omega_{k,\varphi_k}^n}{n!}
 	&=&0.
 	\end{matrix}\right.\]
 \end{defi}
 We will show that this system of equations has a moment map setup. Moreover, there exists a  underlying space which has a K{\"a}hler structure and is compatible with the action. Hence, by considering the orbit space \[\cO_{J,\vec{f}}:=\left(\prod_{i=0}^k\ham_{J_i}^{\CC}(X_i,\omega_i)\right)\cdot (J,f_1,...,f_k).\] Then the moment map equation is given by theorem \ref{thm: ccscK equation gerenal}:
 \begin{theo}
 	Consider the moment map $\mu_{\vec{p}}:\cO_{J,\vec{f}}\rightarrow Lie(H_0\times...\times H_k)^*$ defined by theorem \ref{thm: Kahler moment map of ccscK general} restricted on $\cO_{J,\vec{f}}$. Then $\mu_{\vec{p}}=0$ has a solution  iff the generalized ccscK equation has a solution $(\varphi_0,\cdots,\varphi_k)$.
 \end{theo}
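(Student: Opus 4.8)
The plan is to run the Fujiki--Donaldson dictionary in the coupled form produced by Steps 1--3 of the Construction: the vanishing of the restricted moment map along a complexified orbit is translated, point by point, into a scalar PDE on Kähler potentials.

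First I would identify the orbit $\cO_{J,\vec f}=\left(\prod_{i=0}^k\ham_{J_i}^{\CC}(X_i,\omega_i)\right)\cdot(J,f_1,\dots,f_k)$ with a space of potential tuples. Concretely, an orbit point should be described by diffeomorphisms $\psi_0,\dots,\psi_k$ of $X$ with $\psi_i^{\ast}\omega_i=\omega_i+\ddbar\varphi_i=:\omega_{i,\varphi_i}$ for potentials $\varphi_i\in\psh(X,\omega_i)$, the complex structure being carried to $\psi_0^{\ast}J$ and the maps to $\psi_i^{-1}\circ f_i\circ\psi_0$, with the precise bookkeeping dictated by the embedding $\iota$ and the chosen signs. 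Both inclusions are established exactly as in the $\mu_{\cJ}+\mu_0$ model case: the imaginary directions of $\ham^{\CC}$ integrate, via Moser's lemma and the $\ddbar$-lemma, to a path of Kähler potentials, and conversely every $\omega_{i,\varphi_i}$ is reached this way; membership of the orbit in the analogue of $\cY_0$ is precisely what forces each pulled-back form to remain of type $(1,1)$ and positive, i.e. a genuine Kähler form in $[\omega_i]$.

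Next I would restrict the moment map of theorem \ref{thm: Kahler moment map of ccscK general} to $\cO_{J,\vec f}$ and pull it back by the $\psi_i$. By diffeomorphism-equivariance, $\mu_{\vec p}$ vanishes at the orbit point iff the corresponding identity of forms holds on the fixed manifold $X$; substituting $\psi_i^{\ast}\omega_i=\omega_{i,\varphi_i}$ and using naturality of the Ricci form, $\Ric(\psi_0^{\ast}\omega_0,\psi_0^{\ast}J)=\psi_0^{\ast}\Ric(\omega_0,J)$ (which supplies the $\partial\bar\partial\log$ term), each component of the pulled-back equation becomes one line of the generalized ccscK system: the first component the twisted cscK-type equation carrying the sum $\sum_i \omega_{i,\varphi_i}^{p_i+1}\wedge\omega_{0,\varphi_0}^{n-p_i-1}$, and the $i$-th component ($i\geq 1$) the balanced relation $\omega_{0,\varphi_0}^{n-p_i}\wedge\omega_{i,\varphi_i}^{p_i}=c_i\,\omega_{i,\varphi_i}^n$. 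This is a direct unwinding of definitions; the only care needed is matching the $f_\ast$ versus $f^\ast$ directions and the signs to the embedding $\iota$, as flagged in the Remark. The normalizing constants $c_0,\dots,c_k$ are fixed by demanding that the integral of $\mu_{\vec p}$ vanish, and since $[\psi_i^{\ast}\omega_i]=[\omega_i]$ the relevant integrals $\int_X\omega_{i,\varphi_i}^{a}\wedge\omega_{j,\varphi_j}^{b}$ and $\int_X\Ric(\omega_0)\wedge\omega_0^{n-1}$ are topological and independent of $\vec\varphi$, so these constants coincide with the topological constants in the definition of the generalized ccscK equation. The converse direction is then automatic: a solution $(\varphi_0,\dots,\varphi_k)$ corresponds under the Step-1 identification to a point of $\cO_{J,\vec f}$ at which $\mu_{\vec p}=0$.

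I expect the main obstacle to be Step 1 — proving that $\cO_{J,\vec f}$ is \emph{exactly} the space of such potential tuples, with both inclusions and the positivity/integrability statement that places the pulled-back forms in $\cJ(X,\omega_i)$; this is where the delicate structure of the Kähler submanifold built in the preceding sections is genuinely used. Once that identification is secured, the remainder is equivariance bookkeeping together with a cohomological count of the constants.
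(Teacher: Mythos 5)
Your proposal is correct and follows essentially the same route as the paper: the proof there is exactly the unwinding you describe, writing the moment map at the orbit point, pulling back by $\varphi_0$ and $f_i^*\varphi_i^*$, and using $\varphi_0^*\Ric(\omega_0,J_0^{\varphi_0^{-1}})=\Ric(\omega_{0,\varphi_0},J_0)$ to land on the generalized ccscK system, with the constants fixed cohomologically. The identification you flag as the main obstacle is handled in the paper essentially by definition, since $\ham_{J}^{\CC}(X,\omega)$ is defined as the set of diffeomorphisms with $\varphi^*\omega=\omega+\ddbar h_\varphi$ and the orbit is shown beforehand to equal $F^{-1}\left(\prod_i\ham_{J_i}^{\CC}(X,\omega_i)\right)$, so no separate Moser-type argument is invoked.
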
 
 
 In particular, if $X_0=\cdots=X_k$, $f_1=f_2=\cdots=f_k=id$, $\vec{p}=(0,...,0)$, then this is the ccscK equation with the classes fixed. Also, using similar idea,  we can get an alternate setup for the coupled K{\"a}hler Yang-Mills equation (see \cite{AGG13}) for the case $G=U(1)^k$.
 
\subsection{Application}

 As a result, similar consequences in \cite{Don00}, \cite{Wang04} (see also \cite{PS2004}, \cite{PS2009}, \cite{Sz10}, \cite{LeSz15}, \cite{AGG13}) can be applied for generalised ccscK:
 \begin{enumerate}
 	\item {Corollary \ref{cor: unique solution}:} The solution is unique up to automorphism.
 	\item Corollary \ref{cor: Aut is reductive}: If the solution of $\mu_{\cJ,p}=0$ exists, then $\displaystyle \bigcap_{i=0}^kAut(X_i,L_i)$ is reductive.
 	\item Corollary \ref{cor: solution exists implies Futaki invariant vanish}: Futaki invariant is given by $\langle\mu_{\cJ,p}, \xi\rangle$; and if solution exists, then Futaki variant are $0$.
 	\item Corollary \ref{cor: extremal metric and ccscK}: Calabi functional is defined by $||\mu_{\cJ,p}||^2$; and if $||\mu||^2$ have a critical point and Futaki invariant vanished implies $\mu=0$ has a solution.	
 	\item  Corollary \ref{cor: unique solution}: The Mabuchi functional can be defined. (See definition \ref{def: Calabi functional and Mabuchi functional}) This functional is geodesic convex along the geodesic $e^{\sqrt{-1}\xi}\cdot p$, where $\xi \in Lie(G)$, and the minimums (if exists) are the solutions of $\mu=0$. 
\item Corollary \ref{unique in toric}: If the manifold $X$ is a toric variety, then the $(S^1)^n$ invariant solution is unique (if it exists).
 \end{enumerate}
\begin{rema}
Notice that if $G=Ham(X,\omega)$, there is no complexified group $G^{\CC}$. We can still define an orbit space, but the uniqueness of the solution still need to investigate. If the orbit is geodesically convex, i.e., any two points can be connected by the geodesic  $e^{\sqrt{-1}\xi}\cdot p$, then the solution is unique. However, in general, by \cite{Dar14}, $Ham^{\CC}(X,\RR)$ is not geodesically convex. Hence the uniqueness still need to study. 
\end{rema}

Denote $\cK(X,\omega_i):=\{h_i\in C^{\infty}(X,\RR)| \omega_{i,h_i}:=\omega_i+\ddbar h_{i}>0\}$. 
The smooth gedosic  we defined is given by 
$(h_{0,t}, h_{1,t},\cdots, h_{k,t})\subset \cK(X,\omega_0)\times \cdots  \cK(X,\omega_k) $ such that for all $0\leq i\leq k$, 
\[\ddot{h}_{i,t}=|\nabla \dot{h}_{i,t}|_{\omega_i}^2.\] 
Consider $k=1$ case. In \cite{AGG13},  denote the space of metic on the line bundle $L$ to be $\cH(L)$, then the geodesic (Proposition 3.17 of \cite{AGG13}) is given by $(h,H)\in \cK(X,\omega)\times \cH(L)$. 
\[\ddot{h}_{0,t}=|\nabla \dot{h}_{0,t}|_{\omega_0}^2, \]
\[\ddot{H}_t-2 d\dot{H}_t(JX_{\dot{h}_{0,t}})+ \sqrt{-1}F_{H_t}(X_{\dot{h}_{0,t}},JX_{\dot{h}_{0,t}}).\] 

Notice that the second equation is twisted by the K\"{a}hler potential $h$, while in our note, the geodesic are independent by each other. 
Therefore, in this note, we show that the functional is convex along different geodesic, which is more natural in the space of $\cK(X_{\omega_0})\times \cdots \cK(X,\omega_k)$.
%
%

\subsection{More result of $\mu_p$}
After the above applications, we will study if the couple p equation can be viewed as a moment map in a  K\"{a}hler manifold. Unfortunately, there is no K\"{a}hler submanifold in the domain which is closed under the action of $\ham(X,\omega_X)\times \ham(Y,\omega)$, and $\Omega$ is non -degenerated . The best result is  in Proposition \ref{prop:  pseudo moment map}, which implies that $\mu_p$ is a pseudo moment map in $\cX_{f,p}^+$ (as $\Omega_p|_{\cX_{f,p}^+}$ may be degenerated).

 After that, we give a special case for the moment map $\mu_p$ is still a moment map when $X$ is a submanifold of $Y$. 

Finally, in the appendix, we will give a rough idea about the relation between this setup and the setup in \cite{AGG13} and \cite{DaPi19}.

  \begin{ack}  First, the author would express the appreciation from the Asian Journal of Mathematic, and the comment from the reviewer, which points out some mistakes in the earlier version. The author thank for my advisors in my doctorate degree, Jacob Sturm and Prof. Xiaowei Wang, for many advises.  Besides, I appreciate the help from Raymond Yat Tin Chow, and the discusion with Ved V. Datar, K.K. Kwong, Alen Man Chun Lee, John Man Shun Ma, Macro Yat Hin Suen and YingYing Zhang. The last but not the least, I appreciate the help from my former advisor, Changzheng Li and I want to say thank you for the explanation about the moment map theory in cKYM equation by my current advisor Mario Garcia-Fernandez.

Finally, this work is supported from Sun Yat-Sen University and the author is currently partially funded by Grant EUR2020-112265 funded by MCIN/AEI/10.13039/501100011033 and by 
the European Union NextGenerationEU/PRTR, and Grant CEX2019-000904-S funded by MCIN/AEI/10.13039/501100011033, and also funded by MICINN under grant PID2019-109339GA-C32.
  \end{ack}
\section{moment map for coupled equation p}
In this section, we will define a class of moment map $\mu_p$ on a open subset of $\map(X,Y;p)^+\subset \map(X,Y)$, with a sympectic form $\Omega_p$ on $\map(X,Y;p)^+$.
To do so, first, we define the domain $\map(X,Y;p)^+$ and the symplectic form $\Omega_p$ on $\map(X,Y;p)^+$:
\begin{defi}\label{def: map(X,Y;p)^+}
Let $(X,\omega_X),(Y,\omega_Y)$ be two compact symplectic manifolds which are diffeomorphic to each other. We define $(\map(X,Y;p)^+,\Omega_p)$ to be the space 
 \[\map(X,Y;p)^+:=\{f\in\diff(X,Y)| \omega_X^{n-p}\wedge f^*\omega_Y^p>0\}.\] that is, $\omega_X^{n-p}\wedge f^*\omega_Y^p$ is a volume form,  with the symplectic form 
\begin{align*}\Omega_p(\delta_1f,\delta_2f):=&\dfrac{1}{(n-p)!p!}\int_X \omega_Y(\delta_1f,\delta_2f) \omega_X^{n-p}\wedge f^*\omega_Y^p,
\end{align*} 
where $\delta_1f,\delta_2f\in T_f(\map(X,Y))=f^*(TY):=\{s_f: X \rightarrow TY|| s_f|_x\in T_{f(x)}Y\} $.
\end{defi}	

Notice that $\Omega_p$ is a symplectic form on $\map(X,Y;p)$ as $\omega_Y$ is non degenerate and closed. Also, we have a group $H:=\ham(X,\omega_X)\times\ham(Y,\omega_Y)$ acts on $\map(X,Y;p)$ defined by 
\[(\sigma,\eta)\cdot f:=\eta\circ f\circ \sigma^{-1},\] where $\ham(X,\omega_X)$ and $\ham(Y,\omega_Y)$ are the Hamiltonian groups with respect to $\omega_X$ and $\omega_Y$ respectively. Also, $\map(X,Y;p)^+$ is an open set in $\map(X,Y;p)$, hence it is also a symplectic manifold.

We will first show that the Hamiltonian action on $\map(X,Y;p)$ is closed in $\map(X,Y;p)^+$. Then up to constants, we can define a map $\mu_p$ by 
\begin{defi}\label{eqn: main moment map}
 We denote the map
 \[\mu_p:\map(X,Y;p)^+ \rightarrow Lie(\ham(X,\omega_X)\times \ham(Y,\omega_Y))^*\] by
\begin{equation}
\mu_{p,\omega_X\omega_Y}(f)=\dfrac{n}{n-p}\left(c_1\dfrac{\omega_X^n}{n!}-\dfrac{\omega_X^{n-p-1}\wedge f^*\omega_Y^{p+1}}{(n-p-1)!(p+1)!}, \dfrac{f_*\omega_X^{n-p}\wedge\omega_Y^p}{(n-p)!p!}-c_2\dfrac{\omega_Y^n}{n!}\right),
\end{equation}
where $\displaystyle c_1=\dfrac{\int_X\omega_X^{n-p-1}\wedge f^*\omega_Y^{p+1}}{\int_X\omega_X^n}$, $\displaystyle c_2=\dfrac{\int_Yf_*\omega_X^{n-p}\wedge\omega_Y^p}{\int_Y\omega_Y^n}$. 
\end{defi}
We will show that this is a moment map corresponding to  $(\map(X,Y;p)^+,\Omega_p)$.
In particular, if we take $(X,\omega_X)=(X,\omega_0)$ and $(Y,\omega_Y)=(X,\omega_1)$, then we will get the moment map for coupled equation p. \\

\begin{lemm}\label{lem: closedness of positivity}
The group action $H$ on $\map(X,Y;p)$ is closed in $\map(X,Y;p)^+$. Also, $\Omega_p$ is invariant under the action of $H$ for $p=0,...,n-1$. 
\end{lemm}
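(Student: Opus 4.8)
The plan is to exploit a single structural fact: every element of $\ham(X,\omega_X)$ is a symplectomorphism of $(X,\omega_X)$ lying in the identity component of the symplectomorphism group, so $\sigma^*\omega_X=\omega_X$ and $(\sigma^{-1})^*\omega_X=\omega_X$, and $\sigma$ (hence $\sigma^{-1}$) is orientation-preserving; likewise $\eta^*\omega_Y=\omega_Y$ for $\eta\in\ham(Y,\omega_Y)$. Once this is in place, both assertions reduce to elementary bookkeeping with pullbacks.

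For closedness of positivity, fix $f\in\map(X,Y;p)^+$ and $(\sigma,\eta)\in H$, and put $g:=(\sigma,\eta)\cdot f=\eta\circ f\circ\sigma^{-1}$. Since $g^*=(\sigma^{-1})^*\circ f^*\circ\eta^*$ and $\eta^*\omega_Y=\omega_Y$, we get $g^*\omega_Y=(\sigma^{-1})^*(f^*\omega_Y)$, and therefore
\[
\omega_X^{n-p}\wedge g^*\omega_Y^{\,p}
=(\sigma^{-1})^*\omega_X^{n-p}\wedge(\sigma^{-1})^*\!\bigl(f^*\omega_Y^{\,p}\bigr)
=(\sigma^{-1})^*\!\bigl(\omega_X^{n-p}\wedge f^*\omega_Y^{\,p}\bigr).
\]
Because $\sigma^{-1}$ is an orientation-preserving diffeomorphism of $X$, the pullback of a volume form (compatible with the chosen orientation) is again a volume form of the same sign; hence $g\in\map(X,Y;p)^+$, i.e. $H$ preserves $\map(X,Y;p)^+$.

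For invariance of $\Omega_p$, I would differentiate the action. If $f_t$ is a path through $f$ with $\dot f_0=\delta f\in\Gamma(f^*TY)$, then $g_t=\eta\circ f_t\circ\sigma^{-1}$ gives $\delta g(x)=d\eta_{f(\sigma^{-1}x)}\bigl(\delta f(\sigma^{-1}x)\bigr)$. Using $\eta^*\omega_Y=\omega_Y$ pointwise, the function $\omega_Y(\delta_1 g,\delta_2 g)$ on $X$ equals $\bigl(\omega_Y(\delta_1 f,\delta_2 f)\bigr)\circ\sigma^{-1}$. Combining this with the identity $\omega_X^{n-p}\wedge g^*\omega_Y^{\,p}=(\sigma^{-1})^*\!\bigl(\omega_X^{n-p}\wedge f^*\omega_Y^{\,p}\bigr)$ established above, the entire integrand defining $\Omega_p(\delta_1 g,\delta_2 g)$ at $g$ is the $\sigma^{-1}$-pullback of the integrand defining $\Omega_p(\delta_1 f,\delta_2 f)$ at $f$. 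Since $\sigma^{-1}$ is an orientation-preserving diffeomorphism, $\int_X(\sigma^{-1})^*\alpha=\int_X\alpha$, so $\Omega_p(\delta_1 g,\delta_2 g)|_g=\Omega_p(\delta_1 f,\delta_2 f)|_f$, which is the claimed $H$-invariance.

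No step here is genuinely hard; the only point requiring attention is orientation — one must record that Hamiltonian diffeomorphisms lie in the identity component of $\mathrm{Symp}$, so that ``$\omega_X^{n-p}\wedge f^*\omega_Y^{p}$ is a volume form'' is preserved with the correct sign rather than merely ``nowhere zero''. The remaining minor care is the chain-rule computation of $\delta g$; after that, the closedness computation and the invariance computation are the same manipulation, run once on a top-form and once on a top-form times a function.
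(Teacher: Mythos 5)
Your proof is correct and follows essentially the same route as the paper: both arguments rest on $\sigma^*\omega_X=\omega_X$, $\eta^*\omega_Y=\omega_Y$, the chain rule $\delta g|_x=D\eta\,\delta f|_{\sigma^{-1}(x)}$, and invariance of the integral under the change of variables by $\sigma$. The only cosmetic difference is that you verify positivity pointwise (pullback of a volume form by the orientation-preserving map $\sigma^{-1}$), whereas the paper tests the density against nonnegative functions; your explicit remark about orientation is a welcome precision that the paper leaves implicit.
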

\begin{proof}
Let $\varphi:X\rightarrow \RR$ be a test function, that is $\varphi\geq 0$ is a smooth function, and there exists $x\in X$ such that $\varphi(x)>0$

Let $f\in \map(X,Y;p)^+$ and denote $u=\sigma^{-1}(x)$. Then 
 \begin{align*}
 \int_X \varphi(x) \omega_X^{n-p}\wedge (\eta\circ f\circ \sigma^{-1})^*\omega_Y^p=&\int_X \varphi(x) \omega_X^{n-p}\wedge (\sigma^{-1})^* f^*\eta^*\omega_Y^p\\
 =&\int_X \varphi(u) \sigma^*\omega_X^{n-p}\wedge  f^*\eta^*\omega_Y^p|_u\\
 =&\int_X\varphi(u)\omega_X^{n-p}\wedge f^*\omega_Y^p|_u\\
 =&\int_X\varphi(x)\omega_X^{n-p}\wedge f^*\omega_Y^p>0.
\end{align*}
Hence $\eta\circ f\circ\sigma^{-1}\in \map(X,Y;p)^+$. Also,if we choose $\varphi$ such that $\varphi(x)=1$, by the above calculation, we can see that the volume is unchanged. Finally,
 notice that $(\sigma,\eta)\cdot (\delta f)|_x=9D\eta) (\delta f)|_{\sigma^{-1}(x)}$. Hence
\begin{align*}
&\int_{X}\omega_Y(D\eta)(\delta_1 f), (D\eta)(\delta_2 f))|_{\eta\circ f\circ\sigma^{-1}(x)} \omega_X^{n-p}\wedge (\sigma^{-1})^*f^*\eta^*\omega_Y^{p}\\
=&\int_{X}\omega_Y((D\eta)(\delta_1 f),(D\eta)(\delta_2f))|_{\eta\circ f(x)} \sigma^*\omega_X^{n-p}\wedge f^*\sigma_Y^*\omega_Y^{p}\\
=&\int_{X}\eta^*\omega_Y((\delta_1 f), (\delta_2 f))|_{ f(x)} \sigma^*\omega_X^{n-p}\wedge f^*\eta^*\omega_Y^{p}\\
=&\int_{X}\omega_Y((\delta_1 f),(\delta_2 f))|_{ f(x)} \omega_X^{n-p}\wedge f^*\omega_Y^{p}.
\end{align*}
\end{proof}
\begin{rema}
The proof also applies to the case $p=n$. Indeed, $\sym(X,\omega_X)\times\sym(Y,\omega_Y)\subset \sym(\map(X,Y), \Omega_p)$ for all $p=0,...,n$.
\end{rema}
Before we prove the first main theorem, we first prove a technical lemma.
\begin{lemm}\label{lem: weak interior product commute}
	Let $(X,\alpha,\beta)$ be a symplectic manifold with symplectic forms $\alpha,\beta$. Denote $\gamma_p:=\alpha^{n-1-p}\wedge \beta^p$. If $\alpha^{n-p}\wedge \beta^p>0$, then for any $u,v\in TX$,
	\[n\iota_u\alpha \wedge \iota_v\beta \wedge \gamma_p=-\beta(u,v) \alpha\wedge\gamma_p.\]
Similarly, we have
\[n\iota_u\alpha \wedge \iota_v\alpha \wedge \gamma_p=-\alpha(u,v) \alpha\wedge\gamma_p.\]
\end{lemm}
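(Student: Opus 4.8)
The plan is to reduce the identity to a pointwise statement in linear symplectic algebra and then verify it on a convenient basis. Fix a point $x\in X$; since both sides of $n\,\iota_u\alpha\wedge\iota_v\beta\wedge\gamma_p = -\beta(u,v)\,\alpha\wedge\gamma_p$ are $2n$-forms on a $2n$-dimensional space, it suffices to check the coefficient relative to a fixed volume element. The hypothesis $\alpha^{n-p}\wedge\beta^p>0$ guarantees we are in the open locus where $\alpha$ and $\beta$ are simultaneously "in general position" in the appropriate sense; in particular $\alpha$ is nondegenerate, so I may use $\alpha$ to set up coordinates. First I would choose a Darboux-type basis $e_1,\dots,e_n,f_1,\dots,f_n$ of $T_xX$ adapted to $\alpha$ (so $\alpha=\sum_i e^i\wedge f^i$ in the dual coframe), and it is harmless to further normalize so that $\beta$ is in a canonical form relative to $\alpha$ — either simultaneously diagonalizable, $\beta=\sum_i \lambda_i\, e^i\wedge f^i$, on the relevant open set, or at worst in the standard block normal form for a pair of skew forms. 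The condition $\alpha^{n-p}\wedge\beta^p>0$ then translates into a nonvanishing condition on the elementary symmetric functions of the $\lambda_i$ that is exactly what is needed below.

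With such coordinates in hand, the computation splits by expanding $u$ and $v$ in the basis and using bilinearity in $u$ and $v$ of both sides. The left-hand side $n\,\iota_u\alpha\wedge\iota_v\beta\wedge\gamma_p$ and the right-hand side $-\beta(u,v)\,\alpha\wedge\gamma_p$ are each bilinear in $(u,v)$, so it is enough to verify the identity for $u,v$ ranging over the chosen basis vectors. For generic basis pairs most terms in $\iota_u\alpha\wedge\iota_v\beta\wedge\gamma_p$ vanish for degree/repetition reasons, and the surviving ones can be matched against $\beta(u,v)\,\alpha\wedge\gamma_p$ by a short combinatorial count of how $\gamma_p=\alpha^{n-1-p}\wedge\beta^p$ pairs with the remaining one-forms. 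The key algebraic input is the standard identity $n\,(\iota_u\alpha)\wedge(\iota_v\alpha)\wedge\alpha^{n-1} = -\alpha(u,v)\,\alpha^n$ for a single symplectic form, together with its mixed analogue; the statement to be proved is precisely the $(\alpha,\beta,\gamma_p)$-weighted version of this, and the diagonalization reduces it to the one-form case applied in each $(e_i,f_i)$-plane.

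An alternative, coordinate-free route is to contract the trivially-true identity $\iota_u\!\left(\alpha\wedge\alpha\wedge\gamma_p'\right)=\dots$ — i.e. apply $\iota_u$ and then $\iota_v$ to a suitable top-degree form and expand by the graded Leibniz rule — but I expect the bookkeeping of signs there to be no lighter than the basis computation, so I would present the diagonalized version as the main argument and perhaps remark on the invariant interpretation afterwards. The main obstacle, such as it is, is not conceptual but organizational: making sure the normal form for the pair $(\alpha,\beta)$ is valid on the open set cut out by $\alpha^{n-p}\wedge\beta^p>0$ (as opposed to needing the stronger $\beta>0$), and keeping the signs in the wedge reorderings consistent. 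Once the normalization is justified, the identity falls out by matching coefficients, and the factor $n$ and the minus sign appear exactly as in the classical single-form case.
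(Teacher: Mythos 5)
Your plan --- reduce to a pointwise linear-algebra statement, put the pair $(\alpha,\beta)$ in simultaneous normal form $\alpha=\sum_i e^i\wedge f^i$, $\beta=\sum_i\lambda_i\,e^i\wedge f^i$, and verify on basis vectors --- is exactly the right way to test this identity, but you never actually perform the check; you assert that ``the identity falls out,'' and it does not. The diagonalization does \emph{not} reduce the claim to the single-form identity applied plane by plane, because $\gamma_p=\alpha^{n-1-p}\wedge\beta^p$ couples the $(e_i,f_i)$-planes through elementary symmetric polynomials: one computes $e^k\wedge f^k\wedge\gamma_p=(n-1-p)!\,p!\,\sigma_p(\lambda_{\hat k})\,\mathrm{vol}$ (the variable $\lambda_k$ omitted) while $\alpha\wedge\gamma_p=(n-1-p)!\,p!\,(n-p)\,\sigma_p(\lambda)\,\mathrm{vol}$. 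Hence for $u=\partial_{x^k}$, $v=\partial_{y^k}$ the left-hand side is proportional to $n\,\lambda_k\,\sigma_p(\lambda_{\hat k})$ and the right-hand side to $(n-p)\,\lambda_k\,\sigma_p(\lambda)$, and for $p\geq 1$ the ratio depends on $k$ and on the $\lambda_i$, so no universal constant or sign convention can equate them. Concretely, for $n=2$, $p=1$, $\alpha=dx^1\wedge dy^1+dx^2\wedge dy^2$, $\beta=\lambda_1\,dx^1\wedge dy^1+\lambda_2\,dx^2\wedge dy^2$, $u=\partial_{x^1}$, $v=\partial_{y^1}$, one finds
\[
n\,\iota_u\alpha\wedge\iota_v\beta\wedge\gamma_1=2\lambda_1\lambda_2\,\mathrm{vol},\qquad -\beta(u,v)\,\alpha\wedge\gamma_1=-\lambda_1(\lambda_1+\lambda_2)\,\mathrm{vol},
\]
which differ in absolute value whenever $\lambda_1\neq\lambda_2$. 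The identity is therefore correct only for $p=0$ (where $\sigma_0=1$ and the magnitudes match, up to the overall sign convention for $\iota$ and $\beta(u,v)$), or when $\beta$ is pointwise proportional to $\alpha$; and since the failure already occurs on the open dense locus where the pair is simultaneously diagonalizable, the normal-form subtleties you worry about are not the issue.

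For comparison, the paper's own proof makes the same unjustified leap in one line: it asserts $n\,\eta\wedge\gamma_p/(\alpha\wedge\gamma_p)=\eta_{ij}A^{ji}$ for an arbitrary $2$-form $\eta$, i.e.\ that the trace against the mixed volume $\alpha\wedge\gamma_p$ coincides with the trace against $\alpha$; testing this on $\eta=\beta$ shows it holds only when $\gamma_p=\alpha^{n-1}$, i.e.\ $p=0$. So your instinct to check the statement on an adapted basis was precisely the right corrective --- you simply needed to carry the computation to the end, at which point you would have found that the lemma must be restricted to $p=0$ or replaced by the genuine identity obtained by contracting the vanishing $(2n+2)$-form, $0=\iota_v\iota_u\bigl(\alpha^{n-p}\wedge\beta^{p+1}\bigr)$, which carries additional cross terms that do not assemble into $\beta(u,v)\,\alpha\wedge\gamma_p$ alone.
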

\begin{proof}
	We prove it on local coordinate. As $\alpha\wedge\gamma_p>0$, it is a volume form, so if we denote $\alpha=A_{ij}$, then for any 2 form $\eta=d_{ij}$, 
	\[\dfrac{n\eta\wedge \gamma_p}{\alpha\wedge\gamma_p}=d_{ij}A^{ji},\] where $A^{ji}$ is the inverse matric of $A_{ij}$. As a result, if we denote $u=u^i$, $v=v^i$, $\beta=B_{ij}$, then 
	\[\dfrac{n\iota_u\alpha \wedge \iota_v\beta \wedge \gamma_p}{\alpha\wedge\gamma_p}=u^iA_{ij}v^kB_{kl}A^{lj}=u^lv^kB_{kl}=-\beta(u,v).\]
the second statement follows from the same proof.
\end{proof}
Notice that 
We now give the first theorem of this note.
\begin{theo}\label{thm: main theorem of moment map}
	Let $0\leq p\leq n-1$, then the map $\mu:\map(X,Y;p)^+\rightarrow Lie(\ham(X,\omega_X)\times \ham(Y,\omega_Y))^*$ defined by \[\mu_p(f):=\dfrac{n}{n-p}\left(c_1\dfrac{\omega_X^n}{n!}-\dfrac{\omega_X^{n-1-p}}{(n-1-p)!}\wedge f^*\dfrac{\omega_Y^{p+1}}{(p+1)!},c_2\dfrac{\omega_Y^n}{n!}-\dfrac{f_*\omega_X^{n-p}}{(n-p)!}\wedge \dfrac{\omega_Y^p}{p!}\right)\] is a moment map with respect to the action $\eta\circ f\circ\sigma^{-1}$, where 
	\[c_1:=c_1(f)=\dfrac{n!}{(n-1-p)!(p+1)!}\dfrac{\int_X\omega_X^{n-1-p}\wedge f^*\omega_Y^{p+1}}{\int_X\omega_1^n}, c_2:=c_2(f)=\dfrac{n!}{(n-p)!p!}\dfrac{\int_Yf_*\omega_X^{n-p}\wedge \omega_Y^p}{\int_Y\omega_Y^n}.\] 
\end{theo}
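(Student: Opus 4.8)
The plan is to verify the defining property of a moment map: for every element $(\xi,\zeta)\in Lie(\ham(X,\omega_X)\times\ham(Y,\omega_Y))$, with associated vector field $V_{(\xi,\zeta)}$ on $\map(X,Y;p)^+$ generated by the action $\eta\circ f\circ\sigma^{-1}$, one has
\[
d\langle\mu_p,(\xi,\zeta)\rangle = \iota_{V_{(\xi,\zeta)}}\Omega_p,
\]
together with equivariance (which, since the action preserves $\Omega_p$ by Lemma \ref{lem: closedness of positivity} and the target is built from natural pushforward/pullback densities, will follow from the same kind of change-of-variables computation already used there). First I would compute the infinitesimal action: if $\xi$ corresponds to a Hamiltonian $h_X$ on $X$ and $\zeta$ to $h_Y$ on $Y$, then the variation of $f$ is $\delta f = X_{h_Y}\circ f - Df\cdot X_{h_X}$, where $X_{h_X}, X_{h_Y}$ are the Hamiltonian vector fields. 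Splitting the two pieces, it suffices by linearity to treat the $Y$-part and the $X$-part separately and match them against the two components of $\mu_p$.

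The core computation is the $Y$-component. Pairing $\mu_p$ with $\zeta$ gives $\frac{n}{n-p}\int_Y h_Y\big(\tfrac{f_*\omega_X^{n-p}\wedge\omega_Y^p}{(n-p)!p!}-c_2\tfrac{\omega_Y^n}{n!}\big)$; I would push the first term back to $X$ via $f$, differentiate along a path $f_t$ with $\dot f_0=\delta f$, and use that $\tfrac{d}{dt}f^*\omega_Y^{p+1} = (p+1)\,d(\iota_{\delta f}\omega_Y)\wedge f^*\omega_Y^p$ (Cartan, plus $d\omega_Y=0$) — integrating by parts against the closed form $\omega_X^{n-p-1}$ moves the $d$ off. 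Then Lemma \ref{lem: weak interior product commute}, applied with $\alpha=\omega_X$, $\beta=f^*\omega_Y$, converts the resulting wedge $\iota_u\omega_X\wedge\iota_v(f^*\omega_Y)\wedge\gamma_p$ into $-\tfrac1n f^*\omega_Y(u,v)\,\omega_X\wedge\gamma_p$; this is exactly the algebraic identity that produces the integrand $\omega_Y(\delta_1 f,\delta_2 f)\,\omega_X^{n-p}\wedge f^*\omega_Y^p$ of $\Omega_p$, and it explains the normalizing factor $\tfrac{n}{n-p}$. The constant $c_2$ piece integrates to zero after pairing because $h_Y$ has zero mean against $\tfrac{\omega_Y^n}{n!}$ up to the same total-integral identity; one checks the $c_1,c_2$ normalization is precisely what makes $\mu_p$ land in the annihilator of the constants, i.e. well-defined into $Lie(\ham)^*$. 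The $X$-component is handled symmetrically: write the first slot's pairing with $\xi$, vary, integrate by parts, and apply Lemma \ref{lem: weak interior product commute} again (now directly on $X$ with $\alpha=\omega_X$, $\beta=f^*\omega_Y$) to recover the same $\Omega_p$-integrand with the opposite sign coming from $\delta f = -Df\cdot X_{h_X}$, matching the sign pattern $(c_1\cdots-\cdots,\ \cdots-c_2\cdots)$ in the definition of $\mu_p$.

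The main obstacle I anticipate is bookkeeping rather than conceptual: keeping the two halves of $\delta f$, the pullback/pushforward conventions, and the combinatorial factors $\tfrac{1}{(n-p)!p!}$ versus $\tfrac{1}{(n-p-1)!(p+1)!}$ consistent so that the factor $\tfrac{n}{n-p}$ emerges cleanly and the two components assemble into a single closed one-form that is exactly $\iota_{V}\Omega_p$. A secondary point to be careful about is that $\map(X,Y;p)^+$ is only an open subset where $\omega_X^{n-p}\wedge f^*\omega_Y^p>0$, so Lemma \ref{lem: weak interior product commute} is applicable pointwise along the whole computation; Lemma \ref{lem: closedness of positivity} guarantees we never leave this set under the flow, so the identity $d\langle\mu_p,(\xi,\zeta)\rangle=\iota_{V_{(\xi,\zeta)}}\Omega_p$ holds on all of $\map(X,Y;p)^+$, and closedness of $\Omega_p$ (noted after Definition \ref{def: map(X,Y;p)^+}) finishes the verification that $\mu_p$ is a genuine moment map.
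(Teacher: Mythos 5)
Your proposal is correct and follows essentially the same route as the paper's proof: compute $H_{(\varphi,\psi)}=\langle\mu_p,(\varphi,\psi)\rangle$, vary $f$ along a general path using Cartan's formula and closedness of $\omega_X,\omega_Y$ to integrate by parts, apply Lemma \ref{lem: weak interior product commute} to convert the resulting contractions into $\omega_Y(\cdot,\cdot)$ against the volume density $\omega_X^{n-p}\wedge f^*\omega_Y^p$, and match the outcome with $\iota_{X_{(\varphi,\psi)}}\Omega_p$ for the infinitesimal action $\xi_\psi\circ f-f_*\xi_\varphi$, with the $c_1,c_2$ terms dropping out because those integrals are locally constant in $f$. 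The only caution is notational: reserve $\delta f$ for the arbitrary test variation and keep it distinct from the action-generated vector field, since the moment map identity must be checked against all tangent directions, as your final bilinear expression correctly reflects.
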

\begin{proof}
	Recall that $Lie(\ham(X,\omega_X))\cong C_0^{\infty}(X, \RR):=\{\varphi| \int_X\varphi\omega_X^n=0\}$ such that for any $\varphi\in C^{\infty}(X,\RR)$,
	\[d\varphi=\iota_{\xi_{\varphi}}\omega_X.\]
	Hence we have $Lie(\ham(X,\omega_X))^*\cong \Omega^n(X,\RR)$, the space of volume form of $X$.  \\

	We let $(\varphi,\psi)\in C^{\infty}(X)\times C^{\infty}(Y)$ and $H_{(\varphi,\psi)}(f):=\langle\mu_p(f),(\varphi,\psi)\rangle$. Then
	\[H_{(\varphi,\psi)}(f)=\dfrac{n}{n-p}\left(-\int_X\varphi \dfrac{\omega_X^{n-1-p}}{(n-1-p)!}\wedge f^*\dfrac{\omega_Y^{p+1}}{(p+1)!}+c_1\int_X\varphi\dfrac{\omega_X^n}{n!}-\int_Y \psi \dfrac{f_*\omega_X^{n-p}}{(n-p)!}\wedge \dfrac{\omega_Y^p}{p!}+c_2\int_Y\psi\dfrac{\omega_Y^n}{n!}\right).\]
Our goal is to show that 
\[\iota_{X_{(\varphi,\psi)}}\Omega_p(v)=dH_{(\varphi,\psi)}(v).\]

  Let $f_t$ be a family of diffeomorphism. By defining $\eta_t=f_t\circ f^{-1}$, then 
	$f_t=\eta_t\circ f$.  we denote \[v=\left.\dfrac{d}{dt}\right|_{t=0}\eta_t \in T_{id}(\map(Y,Y;p)^+). \] 	Notice that $f^*\omega_Y$ is a  symplectic form which is closed,
	therefore, 
	\[\left.\dfrac{d}{dt}\right|_{t=0}f_t^*\omega_Y=\left.\dfrac{d}{dt}\right|_{t=0}f^* \eta_t^*\omega_Y=f^*\mathcal{L}_v\omega_Y=f^*d\iota_v\omega_Y=df^*\iota_v\omega_Y.\]
	Similarly, 
	\[\left.\dfrac{d}{dt}\right|_{t=0}{f_t}_*\omega_X=\left.\dfrac{d}{dt}\right|_{t=0}{\eta_t}_*{f}_*\omega_X=d\iota_v f_*\omega_X.\]  
	Notice that these are exact forms. By the fact that for any compact manifold $M^{2n}$, and for any $2k-1$ form $\beta$ and $2n-2k$ closed form $\alpha$,
		\[\int_M \alpha \wedge d\beta=-\int_M (d \alpha)\wedge \beta=0.\] 
		It implies
		\[\dfrac{d}{dt}\int_X \omega_X^{n-p}\wedge f_t^*\omega_Y^p=\dfrac{d}{dt}\int_Y {f_t}_*\omega_X^{n-p}\wedge \omega_Y^p=0.\] 
	We now identify  $ v'\in T \map(X,Y)$ and $v\in T\map(Y,Y)$ by 
	\[v|_{f(x)}=v'|_x.\] Then 
	\begin{align*}
	&dH_{(\varphi,\psi)}(v')\\
	=&\dfrac{-n}{(n-p)!(p)!}\int_X\varphi \omega_X^{n-1-p}\wedge f^*\omega_Y^p\wedge \left(\dfrac{d}{dt}f_t^*\omega_Y(\cdot, \cdot)\right)-\dfrac{n}{(n-p)!p!}\int_Y \psi \left(\dfrac{d}{dt}(f_t^{-1})^*\omega_X\right)\wedge f_*\omega_X^{n-p-1}\wedge \omega_Y^p\\
	=&\dfrac{-n}{(n-p)!(p)!}\int_X\varphi\omega_X^{n-1-p}\wedge f^*\omega_Y^p\wedge df^*\iota_{v}\omega_Y+\dfrac{n}{(n-p)!p!}\int_Y \psi d\iota_{v}f_*\omega_X\wedge f_*\omega_X^{n-p-1}\wedge \omega_Y^p\\
	=&\dfrac{n}{(n-p)!(p)!}\int_Xd\varphi\wedge f^*\iota_{v}\omega_Y\wedge\omega_X^{n-1-p}\wedge f^*\omega_Y^p-\dfrac{n}{(n-p)!p!}\int_Y d\psi \wedge \iota_{v}f_*\omega_X\wedge f_*\omega_X^{n-p-1}\wedge \omega_Y^p\\
	=& \dfrac{n}{(n-p)!(p)!}\int_X \iota_{\xi_{\varphi}}\omega_X\wedge f^*\iota_v\omega_Y \wedge \alpha_f-\dfrac{n}{(n-p)!p!}\int_Y \iota_{\xi_{\psi}}\omega_Y\wedge \iota_vf_*\omega_X\wedge f_*\alpha_f,
	\end{align*}
	where $\alpha_f=\omega_X^{n-p-1}\wedge f^*\omega_Y^{p}$. 
	By Lemma \ref{lem: weak interior product commute}, as $f_*\alpha_f=f_*\omega_X^{n-p-1}\wedge \omega_Y^{p}$,
	\[n\iota_{\xi_{\psi}}\omega_Y\wedge \iota_vf_*\omega_X\wedge \alpha_f=-n\iota_vf_*\omega_X\wedge\iota_{\xi_{\psi}}\omega_Y\wedge \alpha_f=\omega_Y(v,\xi_{\psi})f_*\omega_X\wedge\alpha_f=-\omega_Y(\xi_{\psi},v)f_*\omega_X\wedge\alpha_f.\]
 Moreover,
	\begin{align*}
	n\iota_{\xi_{\varphi}}\omega_X\wedge f^*\iota_v\omega_Y\wedge\alpha_f|_x
	=&nf^*\left(\iota_{f_*\xi_{\varphi}}f_*\omega_X\wedge\iota_v\omega_Y\wedge f_*\alpha_f|_{f(x)}\right)\\
	=&-\omega_Y(f_*\xi_{\varphi}|_x,v|_{f(x)})f^*( f_*\omega_X\wedge f_*\alpha_f|_{f(x)})\\
	=&-\omega_Y(f_*\xi_{\varphi},v') \omega_X\wedge\alpha_f.
	\end{align*}
	Therefore,
	\begin{align*}
	&dH_{(\varphi,\psi)}(v')\\
	=& \dfrac{1}{(n-p)!p!}\left(-\int_X \omega_Y(f_*\xi_{\varphi},v') \omega_X \wedge \alpha_f+\int_Y \omega_Y(\xi_{\psi},v)f_*\omega_X\wedge f_*\alpha_f\right)\\
	=&\dfrac{1}{(n-p)!p!}\left(-\int_X\omega_Y(f_*\xi_{\varphi},v')\omega_X \wedge \alpha_f+\int_X \omega_Y(\xi_{\psi}\circ f,v')\omega_X\wedge \alpha_f\right)\\
	=&\dfrac{1}{(n-p)!p!}\left(\int_X \omega_Y(\xi_{\psi}\circ f,v')\omega_X \wedge \alpha_f-\int_X\omega_Y(f_*\xi_{\varphi},v')\omega_X \wedge \alpha_f\right).
	\end{align*}
	On the other hand, for the action $\eta\circ f\circ\sigma^{-1}$ with $(\varphi,\psi)\in C^{\infty}(X)\times C^{\infty}(Y)$, the induced vector field is given by 
	\[X_{(\varphi,\psi)}= \xi_{\psi}\circ f-f_*\cdot \xi_{\varphi}.\]  So 
	\[\iota_{X_{(\varphi,\psi)}}\Omega_p(v)=\dfrac{1}{(n-p)!p!}\int_X\omega_Y(\xi_{\psi}\circ f-f_*\cdot \xi_{\varphi},v')\omega_X \wedge \alpha_f=dH_{(\varphi,\psi)}(v).\]
\end{proof}
\begin{rema}
Notice that $\map(X,Y;p)^+$ is an open set in $\map(X,Y;p)$, hence it is still a symplectic submanifold. Also, $c_1$, $c_2$ may not be constant, as $\diff(X,Y;p)^+$ may not be connected. However, if $f_1,f_2$ are path connected, then $c_1(f_1)=c_1(f_2)$ and $c_2(f_1)=c_2(f_2)$.
\end{rema}
\begin{defi}
We call the above moment map to be the moment map $p$ with respect to $\omega_X,\omega_Y$, denoted as $\mu_{p;\omega_X,\omega_Y}$, or simply $\mu_p$ if no confusion arises.
\end{defi}

Finally, we define the "dual" moment map by the following.
\begin{defi}\label{dual moment map}
	We define the dual moment map of $\mu_p$ to be $\mu_{p}^*:\map(Y,X;n-p-1)^+\rightarrow \ham(Y,\omega_Y)\times \ham(X,\omega_X)$, with \[\mu_{p }^*(g):=\mu_{n-p-1,\omega_Y,\omega_X}(g)=\dfrac{n}{p+1}\left(c_1\dfrac{\omega_Y^n}{n!}-\dfrac{\omega_Y^{p}\wedge g^*\omega_X^{n-p}}{p!(n-p)!}, c_2\dfrac{\omega_X^n}{n!}-\dfrac{g_*\omega_Y^{p+1}\wedge\omega_X^{n-p-1}}{(p+1)!(n-p-1)!}\right).\]
\end{defi}
Notice that $(\mu_{p}^*)^*=\mu_p$. Also, it is obvious that
\begin{lemm}\label{dual moment map lemma}
	$f:X\rightarrow Y$ solves the coupled equation $p$ (i.e., $\mu_p(f)=0$) iff $f^{-1}$ solve 
	\[\mu_{p}^*(g)=0.\]
\end{lemm}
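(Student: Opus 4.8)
The plan is that this is purely a matter of unwinding the definitions; the one substantive identity is that for a diffeomorphism $f\colon X\to Y$ one has $(f^{-1})^{*}=f_{*}$ and $(f^{-1})_{*}=f^{*}$ on differential forms, so that replacing $f$ by $f^{-1}$ simply interchanges pullback and pushforward. Throughout I take $f\in\map(X,Y;p)^{+}$ with $f^{-1}\in\map(Y,X;n-p-1)^{+}$, which is what is needed for both sides of the asserted equivalence to be defined.

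First I would set $q:=n-p-1$ and expand $\mu_{p}^{*}(g)=\mu_{q;\omega_{Y},\omega_{X}}(g)$ for a diffeomorphism $g\colon Y\to X$ using the formula of Theorem~\ref{thm: main theorem of moment map} with the roles of $(X,\omega_{X})$ and $(Y,\omega_{Y})$ exchanged and $p$ replaced by $q$; since $n-q=p+1$, $n-q-1=p$ and $q+1=n-p$, this reproduces the displayed formula defining $\mu_{p}^{*}$. Now substitute $g=f^{-1}$: using $g^{*}\omega_{X}^{a}=(f^{-1})^{*}\omega_{X}^{a}=f_{*}\omega_{X}^{a}$ and $g_{*}\omega_{Y}^{b}=(f^{-1})_{*}\omega_{Y}^{b}=f^{*}\omega_{Y}^{b}$, the equation $\mu_{p}^{*}(f^{-1})=0$ becomes the pair
\[
\frac{f_{*}\omega_{X}^{n-p}\wedge\omega_{Y}^{p}}{(n-p)!\,p!}=c_{1}(f^{-1})\,\frac{\omega_{Y}^{n}}{n!}\ \text{ on }Y,
\qquad
\frac{f^{*}\omega_{Y}^{p+1}\wedge\omega_{X}^{n-p-1}}{(p+1)!\,(n-p-1)!}=c_{2}(f^{-1})\,\frac{\omega_{X}^{n}}{n!}\ \text{ on }X,
\]
where $c_{1}(f^{-1}),c_{2}(f^{-1})$ denote the normalizing constants of $\mu_{q;\omega_{Y},\omega_{X}}$ at $f^{-1}$. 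After the harmless reordering $f^{*}\omega_{Y}^{p+1}\wedge\omega_{X}^{n-p-1}=\omega_{X}^{n-p-1}\wedge f^{*}\omega_{Y}^{p+1}$ (both factors of even degree), these are exactly the second and first components, respectively, of the equation $\mu_{p}(f)=0$.

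It remains to match the constants. Substituting $g^{*}\omega_{X}^{n-p}=f_{*}\omega_{X}^{n-p}$ into the expression for the first normalizing constant of $\mu_{q;\omega_{Y},\omega_{X}}$ gives $c_{1}(f^{-1})=\tfrac{n!}{(n-p)!\,p!}\cdot\tfrac{\int_{Y}f_{*}\omega_{X}^{n-p}\wedge\omega_{Y}^{p}}{\int_{Y}\omega_{Y}^{n}}$, which is $c_{2}(f)$; similarly $c_{2}(f^{-1})=\tfrac{n!}{(n-p-1)!\,(p+1)!}\cdot\tfrac{\int_{X}\omega_{X}^{n-p-1}\wedge f^{*}\omega_{Y}^{p+1}}{\int_{X}\omega_{X}^{n}}=c_{1}(f)$ (one may also invoke the change-of-variables identity $\int_{X}\omega_{X}^{a}\wedge f^{*}\omega_{Y}^{b}=\int_{Y}f_{*}\omega_{X}^{a}\wedge\omega_{Y}^{b}$). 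Thus $\mu_{p}^{*}(f^{-1})=0$ and $\mu_{p}(f)=0$ are literally the same two equations, listed in opposite order, which proves the equivalence. There is no genuine obstacle here; the only thing demanding care is the bookkeeping — keeping the exponents $n-p$, $n-p-1$, $p$, $p+1$ and the matching factorials straight through the substitution $q=n-p-1$, together with noting the domain hypothesis on $f^{-1}$ so that both statements make sense.
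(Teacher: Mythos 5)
Your argument is correct and is exactly the paper's proof, which consists of the single observation $f^*=f_*^{-1}$, $f_*=(f^{-1})^*$; you have simply written out the exponent and constant bookkeeping that the paper leaves implicit (and which the paper itself records just afterwards in the identity $\mu_p^*(f^{-1})=-\tfrac{p+1}{n-p}\mu_p(f)$). Nothing further is needed.
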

\begin{proof}
	$f^*=f_*^{-1}$ and $f_*=(f^{-1})^*$; and the result follows.
\end{proof}

 The main difference between $\mu_p(f)$ and $\mu_p^*(f^{-1})$ is the following: if we put $g=f^{-1}$, and we reorder the domain into $Lie (\ham(X,\omega_X)\times\ham(Y,\omega_Y))^*$,
\[\mu_p^*(f^{-1})=\dfrac{p+1}{n-p}\mu_p(f).\] Hence, we can change the sign of the moment map without changing the action on $\map(X,Y)$. 

Also, we can change the sign by changing the action. For example, we may change the action to be $\eta^{-1}\circ f \circ\sigma^{-1}$, then the sign of the second part of the moment map will change.

\section{moment map picture for coupled equations with curvature}\label{sec: ccscK}

\subsection{Combining moment maps}
We now use this moment map to get some coupled equations related to Ricci curvature.   Recall that we have the following fact: 

\begin{lemm}
	Let $(M_1,\alpha_1)$, $(M_2,\alpha_2)$ be two symplectic manifolds with hamiltonian group action $G_1,G_2$, and let their corresponding moment map be $\mu_i:M_i\rightarrow Lie(G_i)^*$. Let $H$ be a subgroup of $G_1\times G_2$ and $M$ be a (even dimensional) submanifold of $M_1\times M_2$ such that $H$ is closed under $M$ and $(\Omega_1+\Omega_2)|_M$ is non degenerate (i.e, it is a symplectic form). Then the map 
	$\mu:M\rightarrow Lie(H)^*$ corresponding to the symplectic form $\Omega_1+\Omega_2$  defined by 
	\[\mu_H=\proj_{Lie(H)^*}(\mu_1,\mu_2)|_{M}\] is a moment map, where $\proj_{Lie(H)^*}:Lie(G_1)^*\times Lie(G_2)^*\rightarrow Lie(H)^*$ is the projection map. 
\end{lemm}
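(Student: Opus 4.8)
The plan is to verify the moment map condition directly from the definitions, exploiting the fact that both $\mu_1$ and $\mu_2$ are already moment maps for the full groups $G_1$ and $G_2$. First I would recall that the moment map identity for $\mu_H$ with respect to the symplectic form $\Omega := (\Omega_1+\Omega_2)|_M$ asks that for every $\zeta \in Lie(H)$, writing $H_\zeta := \langle \mu_H, \zeta\rangle \in C^\infty(M)$, one has $dH_\zeta = \iota_{X_\zeta}\Omega$, where $X_\zeta$ is the fundamental vector field of the $H$-action on $M$, together with $H$-equivariance of $\mu_H$. The key structural observation is that, since $H \subset G_1 \times G_2$, the fundamental vector field $X_\zeta$ of the $H$-action on $M_1 \times M_2$ (which is tangent to $M$ by the closedness hypothesis) is simply the restriction of $(X^1_{\zeta_1}, X^2_{\zeta_2})$, where $\zeta = (\zeta_1, \zeta_2) \in Lie(G_1)\oplus Lie(G_2)$ under the inclusion $Lie(H)\hookrightarrow Lie(G_1)\oplus Lie(G_2)$, and $X^i$ denotes the $G_i$-fundamental vector field on $M_i$.

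Next I would assemble the computation. By definition of $\proj_{Lie(H)^*}$ as the dual of the inclusion $Lie(H)\hookrightarrow Lie(G_1)\oplus Lie(G_2)$, we have $H_\zeta = \langle \mu_H, \zeta\rangle = \langle (\mu_1,\mu_2)|_M, \zeta\rangle = \langle \mu_1, \zeta_1\rangle + \langle \mu_2, \zeta_2\rangle =: (H^1_{\zeta_1} + H^2_{\zeta_2})|_M$. Since $\mu_1, \mu_2$ are moment maps on $M_1, M_2$, we have $dH^1_{\zeta_1} = \iota_{X^1_{\zeta_1}}\Omega_1$ on $M_1$ and $dH^2_{\zeta_2} = \iota_{X^2_{\zeta_2}}\Omega_2$ on $M_2$. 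Pulling back along the inclusion $M \hookrightarrow M_1\times M_2$ and using that $d$ commutes with restriction, that $\iota$ commutes with pullback for vector fields tangent to $M$, and that $X_\zeta$ restricts to $(X^1_{\zeta_1}, X^2_{\zeta_2})$, I obtain
\[
dH_\zeta = \bigl(\iota_{X^1_{\zeta_1}}\Omega_1 + \iota_{X^2_{\zeta_2}}\Omega_2\bigr)\big|_M = \iota_{X_\zeta}\bigl((\Omega_1+\Omega_2)|_M\bigr) = \iota_{X_\zeta}\Omega,
\]
where in the middle equality the cross terms $\iota_{X^1_{\zeta_1}}\Omega_2$ and $\iota_{X^2_{\zeta_2}}\Omega_1$ vanish because $\Omega_i$ is pulled back from $M_i$ and $X^j_{\zeta_j}$ has no component in the $M_i$ direction for $i\neq j$. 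This is the desired infinitesimal identity, and it is well-defined precisely because $\Omega$ is assumed non-degenerate on $M$ (so $\mu_H$ is determined up to the usual constant ambiguity, which the choice of $(\mu_1,\mu_2)$ fixes). Equivariance of $\mu_H$ under $H$ follows by restricting the $G_i$-equivariance of $\mu_i$ to $H$ and projecting, using that $\proj_{Lie(H)^*}$ is $H$-equivariant by naturality of the coadjoint action under the inclusion $H \hookrightarrow G_1\times G_2$.

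The main obstacle, such as it is, is bookkeeping rather than depth: one must be careful that the fundamental vector field of $H$ on the submanifold $M$ genuinely equals the restriction of the product fundamental vector field — this is where the hypothesis that $M$ is $H$-invariant (closed under $H$) is used, guaranteeing $X_\zeta$ is tangent to $M$ so that the pullback/interior-product manipulations are legitimate — and that the decomposition $\zeta = (\zeta_1,\zeta_2)$ is compatible with how $\proj_{Lie(H)^*}$ is defined. I would also remark that $M$ being even-dimensional is needed only so that "symplectic form" makes sense on it; the real content is the non-degeneracy of $(\Omega_1+\Omega_2)|_M$. No estimates or analysis are involved; the proof is a short diagram chase once the vector-field identification is set up correctly.
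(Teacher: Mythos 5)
Your proof is correct. The paper does not actually prove this lemma---it is introduced with ``Recall that we have the following fact'' and no argument is supplied---and your verification (splitting $\zeta=(\zeta_1,\zeta_2)$, using $H$-invariance of $M$ to ensure the fundamental vector field $(X^1_{\zeta_1},X^2_{\zeta_2})$ is tangent to $M$ so the pullback and interior product commute, observing that the cross terms $\iota_{X^1_{\zeta_1}}\Omega_2$ and $\iota_{X^2_{\zeta_2}}\Omega_1$ vanish, and obtaining equivariance from the duality between $\proj_{Lie(H)^*}$ and the $\mathrm{Ad}$-equivariant inclusion $Lie(H)\hookrightarrow Lie(G_1)\oplus Lie(G_2)$) is exactly the standard argument the paper leaves implicit.
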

Using this lemma, we can combine the moment map we defined above and the scalar curvature to get different coupled equations.
\begin{rema}
If $\Omega$ is a K{\"a}hler form, and $M$ is a complex submanifold, then $\Omega|_M$ is also a K{\"a}hler form. However, in general, for a submanifold $M$, $\Omega|_M$ may be degenerate. For example, we may take $M\subset L$, where $L$ is the Lagrangian of $M_1\times M_2$. 
\end{rema}

	 Let $Y=X$ with symplectic forms $\omega_i$. 
Let $\cZ_i:=\map((X,\omega_0), (X,\omega_i);p)^+$, \[\Omega_{p,i}((\delta f)_1,(\delta f)_2):=\dfrac{1}{(n-p)!p!}\int_X \omega_{i}((\delta f)_1,(\delta f)_2)\omega_0^{n-p}\wedge f^*\omega_i^p.\] Denote $\ham(X,\omega_i):=H_i$, and we denote the corresponding moment map to be $\mu_{p,i}$, in which 
	\[\mu_{p,i}(f):=\dfrac{n}{n-p}\left(c_{1,i}\dfrac{\omega_0^n}{n!}-\dfrac{\omega_0^{n-1-p}}{(n-1-p)!}\wedge f^*\dfrac{\omega_i^{p+1}}{(p+1)!},\dfrac{f_*\omega_0^{n-p}}{(n-p)!}\wedge \dfrac{\omega_i^p}{p!}-c_{2,i}\dfrac{\omega_i^n}{n!}\right),\]  by theorem \ref{thm: main theorem of moment map}. Then by considering the space $\cZ_1\times \cZ_3\times \cdots \cZ_k$ with 
	\[\Omega=\sum_{i=1}^k\dfrac{n-p}{n}\pi_{i-1}^*\Omega_{p,i},\] we have a moment map 
	$\displaystyle\overline{\mu_p}:\cZ_1\times...\cZ_k\rightarrow \prod_{i=1}^k(H_0\times H_i)$ defined by 
	\[\overline{\mu_p}=\dfrac{n-p}{n}(\mu_{p,1},...,\mu_{p,k}).\]
	
	The next step is finding the suitable subgroup so that the image of moment map can be combined. To be precise,   the embedding $\iota:H_1\times...H_k\rightarrow (H_0\times H_1)\times\cdots\times (H_0\times H_k)$ defined by 
	\[\iota(\sigma_0,...,\sigma_k)=(\sigma_0,\sigma_1,\sigma_0,\sigma_2,...,\sigma_0,\sigma_k)\] induces a map $\displaystyle\iota:Lie(\prod_{i=1}^k(H_0\times H_i))^*\rightarrow Lie(H_0\times H_2\times ...\times H_k)^*$.  Hence the moment map 
	$\mu_p:=\iota^*\circ\overline{\mu_p|_{\cZ}}$ is given by 
	\[\mu_p(f_1,...,f_k)=\left\{\begin{matrix}
	c_0\dfrac{\omega_0^n}{n!}-\dfrac{f_1^*\omega_1^{p+1} +...+f_k^*\omega_k^{p+1}}{(p+1)!}\wedge \dfrac{\omega_0^{n-p-1}}{(n-p-1)!}\\
	\dfrac{{f_1}_*\omega_0^{n-p}}{(n-p)!}\wedge \dfrac{\omega_1^p}{p!}-c_1\dfrac{\omega_1^n}{n!}\\
	\vdots\\
	\dfrac{{f_k}_*\omega_0^{n-p}}{(n-p)!}\wedge \dfrac{\omega_k^p}{p!}-c_k\dfrac{\omega_k^n}{n!}.
	\end{matrix}\right.\]

In general, for different $i$, we can choose different $0\leq p_i\leq n-1$, hence we have the following:
\begin{lemm}\label{lem: moment map of twisted equation}
	Let $(X_i,\omega_i,J_i)$ be K{\"a}hler manifolds, and $X_0$ is diffeomorphic to $X_i$ for all $i=0,1,...,k$. Denote $\vec{p}=(p_1,...,p_k)$. Consider the space 
	\[\cZ_{\vec{p}}:=\prod_{i=1}^k \map(X_0,X_i)_{p_i}^+,\] where 
	\[\map(X_0,X_i)_p^+:=\{f\in\map(X_0,X_i)| \omega_0^{n-p_i}\wedge f^*\omega_i^{p_i}>0\}.\]  We define the symplectic form   on $\cZ(p_1,...,p_k)$ by
	\[\Omega_{\vec{p}}((v_1,...,v_k),( w_1,...,w_k)):=\sum_i\dfrac{n-p_i}{n}\int_X\omega_i(v_i,w_i)\omega_0^{n-p_i}\wedge \omega_i^{p_i}.\] Then with the action of $\displaystyle\prod_{i=0}^k\ham(X_i,\omega_i)$, the moment map is given by 
	\[\mu_{\vec{p}}(\vec{f}):=\left\{\begin{matrix} \displaystyle
	c_0\dfrac{\omega_0^n}{n!}-\sum_{i=1}^k\dfrac{\omega_0^{n-p_i-1}}{(n-p_i-1)!}\wedge \dfrac{f_i^*\omega_i^{p_i+1}}{(p_i+1)!}\\
	\dfrac{{f_1}_*\omega_0^{n-p_1}}{(n-p_1)!}\wedge \dfrac{\omega_1^{p_1}}{p_1!}-c_1\dfrac{\omega_1^n}{n!}\\
	\vdots\\
	\dfrac{{f_k}_*\omega_0^{n-p_k}}{(n-p_k)!}\wedge \dfrac{\omega_k^{p_k}}{p_k!}-c_k\dfrac{\omega_k^n}{n!}
	\end{matrix}\right..\]
	
	Also, by identifying $\cZ_i$ and $\cZ_i^*$, and considering 
	\[\Omega^*=\sum_{i=1}^k\dfrac{p+1}{n}\pi_{i-1}^*|\Omega_i,\] we have 
		\[\mu^*_{\vec{p}}(\vec{f}):=\left\{\begin{matrix} \displaystyle
	\sum_{i=1}^k\dfrac{\omega_0^{n-p_i-1}}{(n-p_i-1)!}\wedge \dfrac{f_i^*\omega_i^{p_i+1}}{(p_i+1)!}-c_0\dfrac{\omega_0^n}{n!}\\
	c_1\dfrac{\omega_1^n}{n!}-\dfrac{{f_1}_*\omega_0^{n-p_1}}{(n-p_1)!}\wedge \dfrac{\omega_1^{p_1}}{p_1!}\\
	\vdots\\
	c_k\dfrac{\omega_k^n}{n!}-\dfrac{{f_k}_*\omega_0^{n-p_k}}{(n-p_k)!}\wedge \dfrac{\omega_k^{p_k}}{p_k!}
	\end{matrix}\right..\]
\end{lemm}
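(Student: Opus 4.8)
The plan is to obtain Lemma~\ref{lem: moment map of twisted equation} as a formal consequence of Theorem~\ref{thm: main theorem of moment map}, Lemma~\ref{lem: closedness of positivity}, and the combining lemma above, following the same "product $\to$ embedding $\to$ projection" recipe already carried out for $\mu_p$. \emph{First}, apply Theorem~\ref{thm: main theorem of moment map} once for each $i=1,\dots,k$ with $(X,\omega_X)=(X_0,\omega_0)$, $(Y,\omega_Y)=(X_i,\omega_i)$ and $p=p_i$: this gives that $\mu_{p_i;\omega_0,\omega_i}$ is a moment map on $\cZ_i:=\map(X_0,X_i)_{p_i}^+$ for the $H_0\times H_i$-action $(\sigma_0,\sigma_i)\cdot f=\sigma_i\circ f\circ\sigma_0^{-1}$ with respect to $\tfrac{n-p_i}{n}\Omega_{p_i,i}$ (this prefactor is chosen so that it later cancels the $\tfrac{n}{n-p_i}$ in the theorem's formula). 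Since a finite direct sum of symplectic forms is symplectic and the product of moment maps is a moment map, $\overline{\mu_{\vec p}}:=\bigl(\tfrac{n-p_1}{n}\mu_{p_1,1},\dots,\tfrac{n-p_k}{n}\mu_{p_k,k}\bigr)$ is a moment map on $(\cZ_{\vec p},\Omega_{\vec p})$ for $\prod_{i=1}^k(H_0\times H_i)$, where $\Omega_{\vec p}=\sum_i\tfrac{n-p_i}{n}\pi_{i-1}^*\Omega_{p_i,i}$ is precisely the form in the statement, read off from the definition of $\Omega_{p,i}$.

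\emph{Second}, embed $H:=\prod_{i=0}^k H_i$ into $\prod_{i=1}^k(H_0\times H_i)$ by $\iota(\sigma_0,\dots,\sigma_k)=(\sigma_0,\sigma_1,\sigma_0,\sigma_2,\dots,\sigma_0,\sigma_k)$, repeating the $\sigma_0$-slot in every factor; the induced $H$-action on $\cZ_{\vec p}$ is $(\sigma_0,\dots,\sigma_k)\cdot(f_1,\dots,f_k)=(\sigma_1\circ f_1\circ\sigma_0^{-1},\dots,\sigma_k\circ f_k\circ\sigma_0^{-1})$. That $\cZ_{\vec p}$ is closed under this action and that $\Omega_{\vec p}$ is $H$-invariant both follow from Lemma~\ref{lem: closedness of positivity} applied in each factor; as $\Omega_{\vec p}$ is already non-degenerate on all of $\cZ_{\vec p}$, the combining lemma applies with $M=\cZ_{\vec p}$ and shows $\mu_{\vec p}:=\iota^*\circ\overline{\mu_{\vec p}}$ is a moment map for $H$. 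Computing $\iota^*$ amounts to noting that the $H_0$-component of $Lie(H)^*$ collects the sum of all $k$ copies of the $H_0$-slot while each $H_i$-component ($i\ge1$) is passed through unchanged: the $H_0$-slot of $\tfrac{n-p_i}{n}\mu_{p_i,i}$ is $c_{1,i}\tfrac{\omega_0^n}{n!}-\tfrac{\omega_0^{n-1-p_i}}{(n-1-p_i)!}\wedge\tfrac{f_i^*\omega_i^{p_i+1}}{(p_i+1)!}$, so summing over $i$ with $c_0:=\sum_i c_{1,i}$ gives the first line of the claimed $\mu_{\vec p}(\vec f)$, and the $H_i$-slot is $\tfrac{f_{i*}\omega_0^{n-p_i}}{(n-p_i)!}\wedge\tfrac{\omega_i^{p_i}}{p_i!}-c_i\tfrac{\omega_i^n}{n!}$ with $c_i:=c_{2,i}$, giving the remaining lines.

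\emph{Third} (the dual formula), identify $\cZ_i$ with $\cZ_i^*=\map(X_i,X_0;n-p_i-1)^+$ via $f\mapsto f^{-1}$, transport the symplectic form $\Omega^*=\sum_i\tfrac{p_i+1}{n}\pi_{i-1}^*\Omega_i^*$, and use the identity $\mu_{p}^*(f^{-1})=-\tfrac{p+1}{n-p}\mu_p(f)$ from the discussion preceding the lemma; running the same embedding/combining argument on the $\cZ_i^*$ then yields $\mu_{\vec p}^*$, the overall sign flips being exactly accounted for by that identity (together with the swap of the $H_0$ and $H_i$ roles induced by $f\mapsto f^{-1}$).

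The genuinely new input is entirely contained in Theorem~\ref{thm: main theorem of moment map}; everything else is bookkeeping. The two points requiring care are (a) arranging the normalizations $\tfrac{n-p_i}{n}$ so that they cancel the $\tfrac{n}{n-p_i}$ in $\mu_{p_i}$, leaving the constants $c_{1,i},c_{2,i}$ of Theorem~\ref{thm: main theorem of moment map}; and (b) pinning down $\iota^*$ — in particular that it is the $H_0$-slots, not the $H_i$-slots, that get summed — and keeping the sign and index conventions consistent when deriving $\mu_{\vec p}^*$. I expect (b), the sign/index bookkeeping for the dual map, to be the most error-prone step, but it presents no real obstacle.
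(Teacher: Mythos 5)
Your proposal is correct and follows essentially the same route as the paper: the paper derives this lemma from the immediately preceding construction (apply Theorem \ref{thm: main theorem of moment map} factorwise on $\cZ_1\times\cdots\times\cZ_k$ with the weights $\tfrac{n-p_i}{n}$ absorbing the prefactors, pull back along the diagonal embedding $\iota(\sigma_0,\dots,\sigma_k)=(\sigma_0,\sigma_1,\sigma_0,\sigma_2,\dots,\sigma_0,\sigma_k)$ so that the $H_0$-slots sum, and obtain the dual formula via the identity $\mu_p^*(f^{-1})=-\tfrac{p+1}{n-p}\mu_p(f)$). Your reading of $\iota^*$ and of the normalization is exactly the paper's.
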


Let $(\cJ(X,\omega_X),\Omega_J)$ to be the space of all integrable almost complex structure which are compatible to $\omega_X$, and for all $A,B\in T_J\cJ(X,\omega_X)$,
\[\Omega_J(A,B)=\dfrac{1}{n!}\int_X \langle A,B\rangle_{g_J}\omega_X^n,\] where $g_J(v,w)=\omega(v,Jw)$. Also, let the action $\ham(X,\omega_X)$ acts on $\cJ(X,\omega_X)$ by 
\[\sigma\cdot J=D\sigma^{-1}\cdot J\cdot D\sigma,\] and denote  
\[\wedge_0^{n}(X):=\{\alpha\in \wedge^n(X)| \int_X \alpha=0\}.\]Then we have a moment map (\cite{Don00},\cite{Don01}) \[\mu_J:\cJ(X,\omega_X)\rightarrow Lie(\ham(X,\omega_X))^*\cong \wedge_0^{n}(X)\] which is given by 
\[\mu_J(\sigma_{\varphi})=\Ric(\omega_{\varphi})\wedge\dfrac{\omega_{
	\varphi}^{n-1}}{(n-1)!}-\bar{S}\dfrac{\omega^n}{n!}=\left(S_{\varphi}-\overline{S}\right)\omega_{X,\varphi}^n,\] where $\sigma_{\varphi}^*\omega_X=\omega_{X,\varphi}$. To sum up, we have the following lemma.
\begin{lemm}\label{lem: moment map of ccscK 1}
Let $\cZ_{\vec{p}}$ be as above and  consider $\cJ(X,\omega_X)\times \cZ_{\vec{p}}, \Omega_{J,{\vec{p}}}:=\pi_{\cJ}^*\Omega_J+\pi_{\cZ}^*\Omega_{\vec{p}}$, then we have a moment map \[\widehat{\mu}_{\cJ,\vec{p}}:\cJ(X,\omega_X)\times \cZ_{\vec{p}} \rightarrow Lie(H_1\times H_1\times ...\times H_k)^*\] defined by 
\[\widehat{\mu}_{\cJ,\vec{p}}(J,f_1,...,f_k)=(\mu_J,\mu_{\cZ}^*).\] Moreover, by considering the group action $\iota:  H_0\times...\times H_k\rightarrow H_0\times H_0\times H_1\times...\times H_k$  by 
\[\iota(\sigma_0,\sigma_1,...,\sigma_k)=(\sigma_0^{-1},\sigma_0,\sigma_1,...,\sigma_k)\]we can restrict the moment map to be
\[{\mu}_{\cJ,\vec{p}}:\cJ(X,\omega_X)\times \cZ_{\vec{p}} \rightarrow Lie( H_0\times ...\times H_k)^*\] which is given by
\[{\mu}_{\cJ,\vec{p}}(J,f_1,...,f_k)=\begin{pmatrix} \displaystyle c_0\dfrac{\omega_0^n}{n!}-
\sum_{i=0}^k\left(\dfrac{f_i^*\omega_i^{p_i+1}}{(p_i+1)!}\wedge \dfrac{\omega_0^{n-p_i-1}}{(n-p_i-1)!}\right)+\Ric(\omega_0, J)\wedge\dfrac{\omega_0^{n-1}}{(n-1)!}\\
c_1\dfrac{\omega_1^n}{n!}-\dfrac{(f_1)_*\omega_0^{n-p_1}}{(n-p_1)!}\wedge \dfrac{\omega_1^{p_1}}{p_1!}\\
\vdots\\
c_k\dfrac{\omega_k^n}{n!}-\dfrac{(f_k)_*\omega_0^{n-p_k}}{(n-p_k)!}\wedge \dfrac{\omega_k^{p_k}}{p_k!}
\end{pmatrix}.\]
\end{lemm}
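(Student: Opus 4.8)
The plan is to obtain $\overline{\mu}_{\cJ,\vec p}$ by two successive applications of the combining–moment–maps lemma recalled above: first form the product symplectic manifold $\cJ(X,\omega_X)\times\cZ_{\vec p}$ with the product moment map, and then restrict the acting group along the embedding $\iota$. No submanifold needs to be cut out at either stage — one works on the whole product — so only the subgroup part of that lemma is used.

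First I would carry out Step 1. The form $\Omega_{J,\vec p}=\pi_{\cJ}^*\Omega_J+\pi_{\cZ}^*\Omega_{\vec p}$ is the standard product symplectic form, hence nondegenerate and closed; $H_0=\ham(X,\omega_X)$ acts on $\cJ(X,\omega_X)$ by $\sigma\cdot J=D\sigma^{-1}JD\sigma$ with Donaldson's moment map $\mu_J(\sigma_\varphi)=\Ric(\omega_{\varphi})\wedge\frac{\omega_{\varphi}^{n-1}}{(n-1)!}-\bar S\frac{\omega^n}{n!}$, while $H_0\times H_1\times\cdots\times H_k$ acts on $\cZ_{\vec p}$ with the dual moment map $\mu^*_{\vec p}$ of Lemma \ref{lem: moment map of twisted equation} (I take the dual version, carrying the symplectic form $\Omega^*$ there — which differs from $\Omega_{\vec p}$ only by positive constants and an inversion on the domain factor — precisely because the sign conventions of $\mu^*_{\vec p}$ are the ones that combine correctly below). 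By the combining lemma applied to the product group $H_0\times(H_0\times H_1\times\cdots\times H_k)$, the pair $\widehat{\mu}_{\cJ,\vec p}:=(\mu_J,\mu^*_{\vec p})$ is a moment map with values in $Lie(H_0\times H_0\times H_1\times\cdots\times H_k)^*$ (the group written, with a typo, as $H_1\times H_1\times\cdots\times H_k$ in the statement).

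Next, Step 2. The map $\iota(\sigma_0,\sigma_1,\dots,\sigma_k)=(\sigma_0^{-1},\sigma_0,\sigma_1,\dots,\sigma_k)$ is a Lie group homomorphism and an embedding; since the target group genuinely acts on the product, $H:=H_0\times\cdots\times H_k$ acts through $\iota$ and is trivially closed, and $\Omega_{J,\vec p}$ is already symplectic, so the subgroup case of the combining lemma yields that $\overline{\mu}_{\cJ,\vec p}:=\iota^*\circ\widehat{\mu}_{\cJ,\vec p}$ is a moment map for this $H$-action. It remains to match $\iota^*\circ\widehat{\mu}_{\cJ,\vec p}$ with the displayed formula. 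Differentiating $\iota$ gives $d\iota(\xi_0,\xi_1,\dots,\xi_k)=(-\xi_0,\xi_0,\xi_1,\dots,\xi_k)$, hence on coalgebras $\iota^*(\alpha_0',\alpha_0'',\alpha_1,\dots,\alpha_k)=(\alpha_0''-\alpha_0',\alpha_1,\dots,\alpha_k)$. Substituting $\alpha_0'=\mu_J$ (the first $H_0$ slot, from $\cJ$) and $\alpha_0''$ equal to the $X_0$-component of $\mu^*_{\vec p}$, and expanding $\mu_J$, the first component becomes
\[\sum_{i=1}^k\frac{\omega_0^{n-p_i-1}}{(n-p_i-1)!}\wedge\frac{f_i^*\omega_i^{p_i+1}}{(p_i+1)!}-c_0\frac{\omega_0^n}{n!}-\Ric(\omega_0,J)\wedge\frac{\omega_0^{n-1}}{(n-1)!}+\bar S\frac{\omega_0^n}{n!},\]
while the remaining $k$ components are unchanged from $\mu^*_{\vec p}$. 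Absorbing $\bar S$ — and, if one prefers the sum written from $i=0$ with the convention $f_0=\id$, also the constant term $\frac{\omega_0^{p_0+1}}{(p_0+1)!}\wedge\frac{\omega_0^{n-p_0-1}}{(n-p_0-1)!}$, which is a multiple of $\frac{\omega_0^n}{n!}$ — into a single renamed constant $c_0$ reproduces exactly the stated $\overline{\mu}_{\cJ,\vec p}$.

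The only real work, and the main place subtlety hides, is the sign and constant bookkeeping: the inversion $\sigma_0\mapsto\sigma_0^{-1}$ in the first slot of $\iota$ must cancel the sign built into the dual moment map $\mu^*_{\vec p}$ so that one ends with $+\sum f_i^*\omega_i^{p_i+1}\wedge\omega_0^{n-p_i-1}$ against $-\Ric(\omega_0)\wedge\omega_0^{n-1}$ (i.e. genuine coupling, not a decoupled system), and the normalizing factors $\frac{n-p_i}{n}$ versus $\frac{p_i+1}{n}$ attached to $\Omega_{\vec p}$ and $\Omega^*$ must stay consistent with the chosen pairings. One should also make explicit that the two copies of $\ham(X_0,\omega_0)$ — one acting on $\cJ(X,\omega_X)$, the other as domain reparametrizations of every factor of $\cZ_{\vec p}$ — are linked by $\iota$ exactly as prescribed, which is what forces the single curvature term to sit over all the $f_i^*\omega_i$ terms in one equation.
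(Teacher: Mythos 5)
Your proposal is correct and follows essentially the same route the paper intends: the paper gives no explicit proof of this lemma (it is introduced with ``to sum up'') and implicitly relies on exactly the combination you carry out, namely taking the product moment map $(\mu_J,\mu^*_{\vec p})$ on $\cJ(X,\omega_X)\times\cZ_{\vec p}$ and pulling back along $\iota$ via the subgroup case of the combining lemma. You also correctly supply the bookkeeping the paper leaves implicit --- the typo $H_1\times H_1$ for $H_0\times H_0$ in the target, the choice of the dual moment map $\mu^*_{\vec p}$ so that the inversion in the first slot of $\iota$ produces the coupled sign pattern, and the absorption of $\bar S\,\omega_0^n/n!$ and the $i=0$ term into the constant $c_0$.
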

\begin{rema}
	We can consider the action on $\cZ_i$ to be $(\sigma,\eta)\cdot f_i=(\eta^{-1}\circ f_i \circ \sigma^{-1})$, then we can change the sign of all the expression $c_i\omega_0^n-{f_i}_*\omega_i^n$.
\end{rema}
Notice that it is not the equation we aim to obtain yet.  In the next section, we will define a suitable submanifold as the domain of the moment map, and discuss how to transform this moment map equation into the moment map equation we want.  

\subsection{K{\"a}hler structure on generalized ccscK}
We now define the domain of the generalized ccscK $\cY_{\vec{p}}$, which hope to be the largest K\"{a}hler manifold which is closed in the group action, and  \[\cY_{\vec{p}}\subset \{(\omega_0,\cdots \omega_k)\in \Omega^2(X_0, \RR)\times \cdots \times\Omega^2(X_k,\RR)| \omega_i \text{ is K\"{a}hler}\}.\]
This space is important as it is useful to study the deformation of solutions. Also, with this K\"{a}hler manifold, any complex orbit is a K\"{a}hler manifold. 
\begin{defi}\label{def: Y}
	Denote $J^f:= Df J Df^{-1}$. Define $\cY_{\vec{p}}\subset\cJ(X_0,\omega_0)\times \cZ_{\vec{p}}$ by 
	\[\cY_{\vec{p}}:=\{(J,f_1,..,f_k)| J^{f_i} \in \cJ(X_i,\omega_i).\}\]
\end{defi}

Our goal is to show that $\cY_{\vec{p}}$ is K\"{a}hler with respect to the symplectic form \[\Omega_{\cJ,p}:=\Omega_J+\Omega_{\vec{p}}.\] 

As a remark, in \cite{DaSm02}, the defintion of complex manifold is really the classical one; locally homeomorphic to the tangent space, and the change of coordinate maps is biholomorphic. Or in this case, the change of coordinate maps perserve the $J$. 

	Notice that we have a natural almost complex structure on $\displaystyle\cJ(X_0,\omega_0)\times \prod_{i=1}^k\map(X_0,X_i)$, denote by $\hat{J}$, which
	\[\hat{J}(\delta J,\delta f_1,...,\delta f_k)=(J\delta J, J^{f_1}\delta f_1,..., J^{f_k}\delta f_k).\]

On the other hand, let $(X_i,\omega_i)$ be K\"{a}hler manifolds diffeomorphic to each other. 
\begin{defi}
Let $X$ be a compact smooth manifold. Then we define $\cJ(X)$ is the space of all almost complex structure, and $\cJ_{int}(X)$ be  the space of all integrable almost complex structure. Moreover, suppose $(X,\omega)$ be a K\"{a}hler manifold. Then we denote
\[\cJ(X,\omega):=\{J\in \cJ_{int}(X)| \omega(J\bullet, J\bullet)=\omega(\bullet,\bullet), \omega(J\bullet, \bullet)>0\}.\] 
\end{defi} 
There is a natural almost complex structure in $\cJ(X_0,\omega_0)\times \prod_{i=1}^k\cJ(X_i ))$, where $\widetilde{J} \in End(TX)\times End(TX_1)\times \cdots \times End(TX_k)$ which is defined by  \[\hat{J}|_{(J_0,\cdots , J_k)}(A_0,\cdots, A_k):= (J_0A_0,\cdots , J_kA_k).\]
By \cite{DaSm02}, $\widetilde{J}$ is indeed integrable, and it is a K\"{a}hler manifold.
Moreover, the map \[F:\cJ(X_0,\omega_0)\times \prod_{i=1}^k\map(X_0,X_i))\rightarrow \cJ(X_0,\omega_0)\times \prod_{i=1}^k\cJ(X_i)\] defined by 
 \[F(J, f_1,...,f_k)=(J, J^{f_1},...,J^{f_k})=(J, Df_1 J Df_1^{-1},..., Df_k J Df_k^{-1})\] is a smooth map satisfying
\[\widetilde{J} (DF(A, \vec{v})|_{(J,f_1,...,f_k)})= (JA, J^{f_1}v_1,..., J^{f_k}v_k)=(DF) (\hat{J}(A, \vec{v})|_{(J,f_1,...,f_k)}). \] Hence  $\cJ(X_0,\omega_0)\times \prod_{i=1}^k\map(X_0,X_i))$ can be considered as a $\widetilde{J}$ closed submanifold of $\displaystyle \cJ(X_0,\omega_0)\times \prod_{i=1}^k\map(X_0,X_i))$, and \[\cY_{\vec{p}}=F^{-1}(\cJ(X_0,\omega_0)\times \prod_{i=1}^k\cJ(X_i,\omega_i)).\]

Therefore, $\hat{J}$ is integrable as $\hat{J}=F^*\widetilde{J}$,
 and hence $F$ is biholomorphic. By theorem 4 of \cite{DaSm02}, $(\cJ(X_0,\omega_0)\times \prod_{i=1}^k\cJ(X_i,\omega_i))$ is a complex manifold, hence we have:

\begin{lemm}\label{lem: Y is complex}
$\cY_{\vec{p}}$ is a complex manifold with integrable almost complex structure $\hat{J}$.
\end{lemm}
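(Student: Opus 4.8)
The plan is to realize $\cY_{\vec p}$ as a complex submanifold of the ambient space $\cJ(X_0,\omega_0)\times\cZ_{\vec p}$, which is open in $\cJ(X_0,\omega_0)\times\prod_{i=1}^k\map(X_0,X_i)$ and hence, by the preceding lemma, carries the integrable almost complex structure $\hat J$. Once that is done we are finished, because a complex submanifold of a complex manifold is again a complex manifold, so $\hat J|_{\cY_{\vec p}}$ is automatically integrable. Thus the task splits into (i) showing that the condition defining $\cY_{\vec p}$ cuts out a smooth submanifold, and (ii) showing that submanifold is $\hat J$-invariant.

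For (ii) — and as a tool for (i) — I would introduce, for each $i$, the map $\Psi_i:\cJ(X_0,\omega_0)\times\map(X_0,X_i)\to\cJ^{\mathrm{int}}(X_i)$, $\Psi_i(J,f_i):=J^{f_i}=Df_i\,J\,Df_i^{-1}$, into the space $\cJ^{\mathrm{int}}(X_i)$ of integrable almost complex structures on $X_i$ (note $J^{f_i}$ is automatically integrable, being the push-forward of the integrable $J$ by a diffeomorphism). Then $\cY_{\vec p}=\bigcap_{i=1}^k\Psi_i^{-1}\big(\cJ(X_i,\omega_i)\big)$, where $\cJ(X_i,\omega_i)\subset\cJ^{\mathrm{int}}(X_i)$ is the subset of those which are moreover $\omega_i$-compatible. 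The crucial point is that each $\Psi_i$ is holomorphic, the target carrying its natural almost complex structure $A\mapsto J^{f_i}A$ at the point $J^{f_i}$. Writing $v_i:=\delta f_i\circ f_i^{-1}$ for the vector field on $X_i$ attached to $\delta f_i$, one has $d\Psi_i(\delta J,\delta f_i)=(\delta J)^{f_i}\pm\mathcal L_{v_i}J^{f_i}$; applying $\hat J$ sends $\delta J\mapsto J\delta J$ and $v_i\mapsto J^{f_i}v_i$, and then $(J\delta J)^{f_i}=J^{f_i}(\delta J)^{f_i}$ by conjugation, while $\mathcal L_{J^{f_i}v_i}J^{f_i}=J^{f_i}\,\mathcal L_{v_i}J^{f_i}$ by the identity $\mathcal L_{\mathbb J w}\mathbb J=\mathbb J\,\mathcal L_w\mathbb J$, which holds for any integrable $\mathbb J$ (it is precisely the vanishing of the Nijenhuis tensor of $\mathbb J$ rearranged, and is essentially the cross-term already treated in the proof that $\hat J$ is integrable). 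Hence $d\Psi_i$ intertwines $\hat J$ with multiplication by $J^{f_i}$. Combined with the standard fact (Donaldson's cscK picture, cf.\ \cite{Don00}) that $\cJ(X_i,\omega_i)$ is a complex submanifold of $\cJ^{\mathrm{int}}(X_i)$ — equivalently that $T_{\mathbb J}\cJ(X_i,\omega_i)$, the $g_{\mathbb J}$-symmetric endomorphisms anticommuting with $\mathbb J$ that solve the linearized integrability equation, is invariant under $A\mapsto\mathbb J A$ — this yields that $T_{(J,\vec f)}\cY_{\vec p}=\{(\delta J,\delta\vec f):d\Psi_i(\delta J,\delta f_i)\in T_{J^{f_i}}\cJ(X_i,\omega_i)\ \forall i\}$ is carried into itself by $\hat J$.

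The step I expect to be the genuine obstacle is (i): that $\cY_{\vec p}$ is a smooth submanifold in the first place. The condition ``$f_i^*\omega_i$ is of type $(1,1)$ with respect to $J$'' is a closed, PDE-type constraint rather than an open one, so to produce local charts one must verify that the linearizations $d\Psi_i$, followed by the projection onto the normal directions of $\cJ(X_i,\omega_i)$, together form an operator with closed range and finite-dimensional cokernel — equivalently, that $\Psi=(\Psi_1,\dots,\Psi_k)$ is transverse to $\prod_i\cJ(X_i,\omega_i)$ near the base point $(J_0,\vec f_0)\in\cY_{\vec p}$ — and then apply the implicit function theorem in the appropriate (tame Fréchet, or Sobolev-completed) category. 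This is the same analytic input already needed to treat $\cJ(X,\omega)$ itself as a manifold; it is where the real work sits, whereas the rest of the argument reduces, as above, to the algebra of the Nijenhuis identity for $J^{f_i}$.
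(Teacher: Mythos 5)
Your proposal follows essentially the same route as the paper: the paper also introduces the map $F(J,f_1,\dots,f_k)=(J,J^{f_1},\dots,J^{f_k})$ (your $\Psi$), checks that $DF$ intertwines $\hat J$ with the natural complex structure on the target, and concludes that $\cY_{\vec p}=F^{-1}\bigl(\cJ(X_0,\omega_0)\times\prod_i\cJ(X_i,\omega_i)\bigr)$ is a complex submanifold. The smoothness/transversality issue you flag in step (i) is real, but the paper's own proof does not address it either (it computes $DF$, notes differentiability, and passes directly to the conclusion), so your write-up is if anything more candid about where the remaining analytic work lies.
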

%
As a consequence, we have the following result:
\begin{theo}\label{thm: Kahler moment map of ccscK general}
$(\cY_{\vec{p}}, \Omega_{J,\vec{p}}:=\Omega_J+\Omega_{\vec{p}}, \hat{J})$ is a K{\"a}hler manifold which is closed under the action $\displaystyle\prod_{i=0}^k\ham(X_i,\omega_i)$, in which 
\begin{align*}
(\sigma_0,...,\sigma_i)\cdot(J,f_1,...,f_k)=&(\sigma_0^{-1}\cdot J, \sigma_1\circ f_1 \circ \sigma_0^{-1},\cdots, \sigma_k\circ f_k\circ \sigma_0^{-1})\\
=&(D\sigma_0JD\sigma_0^{-1}, \sigma_1\circ f_1 \circ \sigma_0^{-1},\cdots, \sigma_k\circ f_k\circ \sigma_0^{-1}).
\end{align*} 
Therefore, the moment map defined in Lemma \ref{lem: moment map of ccscK 1} can be restricted in $\cY_{\vec{p}}$. \\

We denote this moment map as ${\mu}_{\cJ,\vec{p}}$. 
\end{theo}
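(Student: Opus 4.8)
The statement packages three assertions: (i) $(\cY_{\vec p},\Omega_{J,\vec p},\hat J)$ is K\"ahler; (ii) $\cY_{\vec p}$ is invariant under the product Hamiltonian action with the stated formula; (iii) the action is isometric, so the moment map of Lemma \ref{lem: moment map of ccscK 1} restricts. I would treat these in exactly that order. For (i), the complex structure $\hat J$ is already known to be integrable by the preceding lemma, and $\cY_{\vec p}$ is a complex submanifold of $\cJ(X_0,\omega_0)\times\cZ_{\vec p}$ by Lemma \ref{lem: Y is complex}; so what remains is to check that $\Omega_{J,\vec p}=\Omega_J+\Omega_{\vec p}$ restricted to $\cY_{\vec p}$ is compatible with $\hat J$, i.e. that $\Omega_{J,\vec p}(\hat J\cdot,\hat J\cdot)=\Omega_{J,\vec p}(\cdot,\cdot)$ and positivity of $\Omega_{J,\vec p}(\cdot,\hat J\cdot)$. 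The first factor $\Omega_J$ is the standard Donaldson form, known to be $J$-compatible on $\cJ(X_0,\omega_0)$. For each factor $\cZ_i$, one computes on a tangent vector $v_i$ at $f_i$ that $\Omega_{p_i,i}(J^{f_i}v_i,J^{f_i}w_i)=\int_X\omega_i(J^{f_i}v_i,J^{f_i}w_i)\,\omega_0^{n-p_i}\wedge f_i^*\omega_i^{p_i}$; since $(J,f_i)\in\cY_{\vec p}$ means $J^{f_i}\in\cJ(X_i,\omega_i)$, i.e. $\omega_i$ is $J^{f_i}$-invariant, the integrand equals $\omega_i(v_i,w_i)$ pointwise, giving invariance; positivity follows because $\omega_i(\cdot,J^{f_i}\cdot)$ is the associated Riemannian metric and $\omega_0^{n-p_i}\wedge f_i^*\omega_i^{p_i}$ is a volume form by the definition of $\map(X_0,X_i)_{p_i}^+$. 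Together with (ii) this makes $\Omega_{J,\vec p}$ a genuine K\"ahler form on $\cY_{\vec p}$.

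For (ii), I would verify closedness of $\cY_{\vec p}$ under the action by a direct conjugation computation. If $(\sigma_0,\dots,\sigma_k)$ acts, the new almost complex structure on $X_0$ is $\tilde J:=D\sigma_0\,J\,D\sigma_0^{-1}$ and the new map to $X_i$ is $\tilde f_i:=\sigma_i\circ f_i\circ\sigma_0^{-1}$. Then
\[
\tilde J^{\,\tilde f_i}=D\tilde f_i\,\tilde J\,D\tilde f_i^{-1}
=D\sigma_i\,Df_i\,D\sigma_0^{-1}\cdot D\sigma_0\,J\,D\sigma_0^{-1}\cdot D\sigma_0\,Df_i^{-1}\,D\sigma_i^{-1}
=D\sigma_i\,(J^{f_i})\,D\sigma_i^{-1}.
\]
Since $\sigma_i\in\ham(X_i,\omega_i)$ acts on $\cJ(X_i,\omega_i)$ preserving it (that is exactly the action $\sigma\cdot J = D\sigma^{-1}J D\sigma$ recalled before Lemma \ref{lem: moment map of ccscK 1}, up to the inverse convention, which is harmless since $\cJ$ is a group-orbit-closed space), $\tilde J^{\,\tilde f_i}\in\cJ(X_i,\omega_i)$, so $(\tilde J,\tilde f_1,\dots,\tilde f_k)\in\cY_{\vec p}$. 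One also needs that each factor action $(\sigma_0,\sigma_i)\cdot f_i=\sigma_i\circ f_i\circ\sigma_0^{-1}$ stays in $\map(X_0,X_i)_{p_i}^+$, which is precisely Lemma \ref{lem: closedness of positivity} applied with $(X,\omega_X)=(X_0,\omega_0)$, $(Y,\omega_Y)=(X_i,\omega_i)$, $p=p_i$.

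For (iii), isometry of the action means each summand of $\Omega_{J,\vec p}$ is preserved. On the $\cJ$-factor this is the standard invariance of $\Omega_J$ under $\ham(X_0,\omega_0)$; on each $\cZ_i$-factor it is the invariance of $\Omega_{p_i,i}$ under $\ham(X_0,\omega_0)\times\ham(X_i,\omega_i)$, which is again Lemma \ref{lem: closedness of positivity}. Since $\hat J$ is also preserved (conjugation by the differentials intertwines $\hat J$ before and after, as the Nijenhuis-type computation in the proof that $\hat J$ is integrable already shows the relevant naturality), the induced metric is preserved, so the action is isometric. Having established that $\cY_{\vec p}$ is a K\"ahler submanifold invariant under a Hamiltonian isometric action, the general restriction principle (the unnumbered Lemma on combining moment maps, with $M=\cY_{\vec p}$, $H=\prod_i\ham(X_i,\omega_i)$ embedded via $\iota$) applies verbatim and yields the restricted moment map $\mu_{\cJ,\vec p}$.

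\textbf{Main obstacle.} The only nonroutine point is matching the \emph{sign and inverse conventions} so that $\iota$ really embeds $\prod_i\ham(X_i,\omega_i)$ into the ambient group as a subgroup and so that $\Omega_{J,\vec p}$ — not some indefinite combination — is the symplectic form being used; the remark after Lemma \ref{lem: moment map of ccscK 1} flags exactly this freedom. Concretely, one must check that the $\sigma_0^{-1}$ appearing in the $\cJ$-slot of the embedding $\iota(\sigma_0,\dots,\sigma_k)=(\sigma_0^{-1},\sigma_0,\sigma_1,\dots,\sigma_k)$ is compatible with the action formula $(\sigma_0,\dots)\cdot(J,\vec f)=(D\sigma_0 J D\sigma_0^{-1},\dots)$ stated in the theorem, i.e. that pulling back along $\iota$ does not flip the overall sign of $\Omega_{J,\vec p}$. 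Once that bookkeeping is pinned down, everything else is an application of results already proved (Lemmas \ref{lem: closedness of positivity}, \ref{lem: weak interior product commute} via Theorem \ref{thm: main theorem of moment map}, Lemma \ref{lem: Y is complex}, and the integrability of $\hat J$).
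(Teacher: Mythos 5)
Your proposal is correct and follows essentially the same route as the paper's proof: check $\hat J$-invariance of $\Omega_{J,\vec p}$ using $J^{f_i}\in\cJ(X_i,\omega_i)$, verify closedness of the action via the conjugation identity $\tilde J^{\,\tilde f_i}=D\sigma_i\,J^{f_i}\,D\sigma_i^{-1}$, and establish isometry by the change-of-variables computation (which the paper writes out explicitly and you correctly recognize as the content of Lemma \ref{lem: closedness of positivity}). Your additional remarks on positivity and on the sign/inverse conventions of the embedding are sensible bookkeeping but do not change the argument.
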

\begin{proof}
Let $(A, \varphi_1,...,\varphi_k), (B,\psi_1,...,\psi_k)\in T_{(J,f_1,...,f_k)}\cY_{\vec{p}}$. 	Then 
\[\Omega_{J,\vec{p}}((A, \varphi_1,...,\varphi_k), (B,\psi_1,...,\psi_k)):=\langle A,B\rangle_{\omega_0}+\sum_{i=1}^k\dfrac{(n-p_i)}{(n-p_i)!p_i!}\int_{X_i}\omega_i(\varphi_i,\psi_i) \omega_0^{n-p_i}\wedge f_i^*\omega_i^{p_i}.\] Then 
\begin{align*}
&\Omega_{J,\vec{p}}(\hat{J}(A, \varphi_1,...,\varphi_k), \hat{J}(B,\psi_1,...,\psi_k))\\=&\langle JA,JB\rangle_{\omega_0}+\sum_{i=1}^k\dfrac{(n-p_i)}{(n-p_i)!p_i!}\int_{X_i}\omega_i(J^{f_i}\varphi_i,J^{f_i}\psi_i) \omega_0^{n-p_i}\wedge f_i^*\omega_i^{p_i}\\
=&\langle A,B\rangle_{\omega_0}+\sum_{i=1}^k\dfrac{(n-p_i)}{(n-p_i)!p_i!}\int_{X_i}\omega_i(\varphi_i,\psi_i) \omega_0^{n-p_i}\wedge f_i^*\omega_i^{p_i}=\Omega_{J,\vec{p}}((A, \varphi_1,...,\varphi_k), (B,\psi_1,...,\psi_k))
\end{align*}
as $J^{f_i}\in \cJ(X_i,\omega_i)$. Hence $\Omega_{J,\vec{p}}$ is $J$ invariant, which implies it is a K{\"a}hler form.\\

For the action part, first,
 \begin{align*}&(\sigma_0,...,\sigma_i)\cdot (J,J^{f_1},\cdots, J^{f_k})\\
 =&(D\sigma_0JD\sigma_0^{-1}, D\sigma_1Df_1 D\sigma_0^{-1}D\sigma_0 JD\sigma_0^{-1}D\sigma_0Df_1^{-1}D\sigma_1^{-1},\cdots,D\sigma_kDf_k D\sigma_0^{-1}D\sigma_0JD\sigma_0^{-1}D\sigma_0Df_k^{-1}D\sigma_k^{-1})\\
 =&(D\sigma_0JD\sigma_0^{-1}, D\sigma_1Df_1 JDf_1^{-1}D\sigma_1^{-1},\cdots,D\sigma_kDf_k JDf_k^{-1}D\sigma_k^{-1}).\end{align*}
As $J^{f_i}\in\cJ(X_i,\omega_i)$, $\omega_i(J^{f_i}\bullet, J^{f_i}\bullet)=\omega_i(\bullet,\bullet)$, 
\begin{align*}
\omega_i((D\sigma_i J^{f_i}D\sigma_i^{-1})\bullet,(D\sigma_i J^{f_i}D\sigma_i^{-1})\bullet)
=&\sigma_i^*\omega_i(J^{f_i} D\sigma_i^{-1}\bullet, J^{f_i}D\sigma_i^{-1}\bullet)\\
=&\omega_i(J^{f_i} D\sigma_i^{-1}\bullet, J^{f_i}D\sigma_i^{-1}\bullet)\\
=&\omega_i(D\sigma_i^{-1}\bullet, D\sigma_i^{-1}\bullet)\\
=&\sigma_i^*\omega_i(D\sigma_i^{-1}\bullet, D\sigma_i^{-1}\bullet)\\
=&\omega_i(\bullet,\bullet).
\end{align*}
Hence $(\sigma_0,...,\sigma_i)\cdot (J,J^{f_1},\cdots, J^{f_k})\in \cY_{\vec{p}}$.\\

\end{proof}

Recall that $\ham_{J}^{\CC}(X,\omega_X)$ is given as
\[\ham_{J}^{\CC}(X,\omega_X)=\{\sigma\in \map(X,X)|\varphi^*\omega_X=\omega_X+\sqrt{-1}\dd_{J}\dbar_{J} h_{\sigma}\}\] for some K{\"a}hler potential $h$. Notice that the $
\displaystyle\prod_{i=0}^k\ham(X_i,\omega_i)$ action  is closed in $\cY_{\vec{p}}$ and $\cY_{\vec{p}}$  is a complex manifold implies that the orbit space is given by  \[\cO_{J,\vec{f}}:=\left(\prod_{i=0}^k\ham_{J_i}^{\CC}(X_i,\omega_i)\right)\cdot (J,f_1,...,f_k)\] is in $\cY_{\vec{p}}$. Moreover,
\begin{align*}
&F((\sigma_0,\cdots,\sigma_k)\cdot(J,f_1,\cdots,f_k))\\
=& F(D\sigma_0 JD\sigma_0^{-1}, \sigma_1\circ f_1\circ \sigma_0^{-1},\cdots,\sigma_k\circ f_k\circ \sigma_k^{-1} )
\\ =& (D\sigma_0 JD\sigma_0^{-1},D(\sigma_1\circ f_1\circ \sigma_0^{-1})(D\sigma_0 JD\sigma_0^{-1})D(\sigma_1\circ f_1\circ \sigma_0^{-1})^{-1},\cdots, D(\sigma_k\circ f_k\circ \sigma_0^{-1})(D\sigma_0 JD\sigma_0^{-1})D(\sigma_k\circ f_k\circ \sigma_0^{-1})^{-1})\\
=&(D\sigma_0 JD\sigma_0^{-1},D(\sigma_1\circ f_1) JD(\sigma_1\circ f_1)^{-1},\cdots, D(\sigma_k\circ f_k) JD(\sigma_k\circ f_k)^{-1})\\
 =&(\varphi_0,\cdots,\varphi_k)\cdot F(J,f_1,\cdots,f_k).
\end{align*} Therefore, we have
\[\cO_{J,\vec{f}}=F^{-1}(\prod_{i=0}^k\left(\ham_{J_i}^{\CC}(X,\omega_i)\right),\]  hence $\cO_{J,\vec{f}}$ is also a K\"{a}hler submanifold of $\cY_{\vec{p}}$.  

\begin{rema}
Notice that although $(\varphi_0,\cdots,\varphi_k)\cdot (J,f_1,...,f_k)$ is well defined, $\ham_{J}^{\CC}(X,\omega_X)$ is not a group. As a remark, we can consider the orbit space as a subset of the action coming from $\displaystyle\prod_{i=0}^k\diff(X_i)$.
\end{rema}

\begin{theo}\label{thm: ccscK equation gerenal}
Consider the moment map $\mu_{\cJ,\vec{p}}:\cO_{J,\vec{f}}\rightarrow Lie(H_0\times...\times H_k)^*$ defined by Theorem \ref{thm: Kahler moment map of ccscK general} restricted on $\cO_{J,\vec{f}}$. Then $\mu_{\vec{p}}=0$ iff \[\left\{\begin{matrix}\displaystyle
\sum_{i=1}^k\left(\dfrac{f_i^*\omega_{i,\varphi_i}^{p_i+1}}{(p_i+1)!}\wedge \dfrac{\omega_{0,\varphi_0}^{n-p_i-1}}{(n-p_i-1)!}\right)-\Ric(\omega_{0,\varphi_0}, J_0)\wedge\dfrac{\omega_{0,\varphi_0}^{n-1}}{(n-1)!}-c_0\dfrac{\omega_{0,\varphi_0}^n}{n!}&=&0\\
\dfrac{\omega_{0,\varphi_0}^{n-p_1}}{(n-p_1)!}\wedge \dfrac{f_1^*\omega_{1,\varphi_1}^{p_1}}{p_1!}-c_1\dfrac{f_1^*\omega_{1,\varphi_1}^n}{n!}&=&0\\
\vdots\\
\dfrac{\omega_{0,\varphi_0}^{n-p_k}}{(n-p_k)!}\wedge \dfrac{f_k^*\omega_{k,\varphi_k}^{p_k}}{p_k!}-c_k\dfrac{f_k^*\omega_{k,\varphi_k}^n}{n!}
&=&0
\end{matrix}\right..\] In particular, if $X_0=\cdots=X_k$, $f_1=f_2=\cdots=f_k=id$, $\vec{p}=(0,...,0)$, then this is the ccscK equation with the classes fixed.
\end{theo}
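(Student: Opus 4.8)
The plan is to translate the vanishing of the restricted moment map $\mu_{\cJ,\vec{p}}$ on the orbit $\cO_{J,\vec{f}}$ into equations for the K\"ahler potentials $h_{\varphi_i}$, using the explicit formula for $\overline{\mu}_{\cJ,\vec{p}}$ from Lemma~\ref{lem: moment map of ccscK 1} together with the description of $\cO_{J,\vec{f}}$ as the image under the complexified action of $(J,f_1,\dots,f_k)$. First I would take a point $((\sigma_0,\eta_0),\dots,(\sigma_k,\eta_k))\cdot(J,f_1,\dots,f_k)$ in $\cO_{J,\vec{f}}$ and compute what each pulled-back form is: writing $\sigma_0^*\omega_0 = \omega_{0,\varphi_0}$ and, more delicately, tracking how $\sigma_i\circ f_i\circ\sigma_0^{-1}$ transforms $\omega_0^{n-p_i}\wedge f_i^*\omega_i^{p_i}$ and $f_i^*\omega_i^{p_i+1}\wedge\omega_0^{n-p_i-1}$. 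The key observation is that applying the diffeomorphism $\sigma_0$ to pull back the whole first component of $\overline{\mu}_{\cJ,\vec{p}}$ replaces $\omega_0$ everywhere by $\omega_{0,\varphi_0} = \sigma_0^*\omega_0$, replaces $\Ric(\omega_0,J)$ by $\Ric(\omega_{0,\varphi_0},J_0)$ since Ricci is natural under pullback (here $J_0 = \sigma_0^{-1}\cdot J$ in the convention of the paper), and turns $f_i^*\omega_i^{p_i+1}$ into $(f_i)_{\text{new}}^*\omega_{i,\varphi_i}^{p_i+1}$ with the correct potentials absorbed; the lower $k$ components transform the same way, and the $\sigma_0$-equivariance is exactly what makes the first slot homogeneous. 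This is the content already set up in the displayed equivariance computation $F((\varphi_0,\dots,\varphi_k)\cdot(J,\vec f)) = (\varphi_0,\dots,\varphi_k)\cdot F(J,\vec f)$ just before the theorem, so I would invoke that to stay on $\cY_{\vec p}$.

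Next I would handle the lower $k$ equations. The moment map component there reads $c_i\,\omega_i^n/n! - (f_i)_*\omega_0^{n-p_i}/(n-p_i)!\wedge\omega_i^{p_i}/p_i!$ (up to the sign fixed by the embedding $\iota$); pulling everything back by $f_i$ — equivalently, rewriting the pushforward as a pullback along $f_i^{-1}$, which is where the $\mu_p^*$/duality discussion is used — converts $(f_i)_*\omega_0^{n-p_i}\wedge\omega_i^{p_i}$ into $\omega_{0,\varphi_0}^{n-p_i}\wedge f_i^*\omega_{i,\varphi_i}^{p_i}$ and $\omega_i^n$ into $f_i^*\omega_{i,\varphi_i}^n$, giving exactly the $i$-th displayed equation. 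I would then check that the constants $c_0,c_1,\dots,c_k$ produced by the moment-map normalization (the integral constraints in Theorem~\ref{thm: main theorem of moment map} and Lemma~\ref{lem: moment map of ccscK 1}) match the topological constants in the statement; this is a cohomology-class computation, using that $[\sigma_0^*\omega_0]=[\omega_0]$ etc.\ and that $\int$ of the relevant wedge products depends only on the classes, so $c_i(f_i\text{ twisted}) = c_i(f_i)$ by the path-connectedness remark after Theorem~\ref{thm: main theorem of moment map}.

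For the converse direction, given a solution $(\varphi_0,\dots,\varphi_k)$ of the generalized ccscK system I would run the same identities backwards: set $\sigma_i$ to be the diffeomorphism realizing $\omega_{i,\varphi_i} = \sigma_i^*\omega_i$ (which exists inside $\ham_{J_i}^{\CC}$ by definition of that space), form the corresponding point of $\cO_{J,\vec f}$, and observe that the moment-map formula evaluated there is term-by-term the left-hand side of the system, hence zero. Finally I would note the specialization: with $X_0=\dots=X_k$, all $f_i=\id$, and $\vec p = 0$, the first equation becomes $\sum_i\omega_{i,\varphi_i}\wedge\omega_{0,\varphi_0}^{n-1}/(n-1)! - \Ric(\omega_{0,\varphi_0})\wedge\omega_{0,\varphi_0}^{n-1}/(n-1)! - c_0\,\omega_{0,\varphi_0}^n/n! = 0$, i.e.\ $S_{\omega_{0,\varphi_0}} = \tr_{\omega_{0,\varphi_0}}(\sum_i\omega_{i,\varphi_i}) - c_0$ after dividing by the volume form, and the remaining equations become $\omega_{0,\varphi_0}^n/n! = c_i\,\omega_{i,\varphi_i}^n/n!$, which is the volume-normalization $\omega_{0,\varphi_0}^n/\vol(\omega_0) = \dots = \omega_{k,\varphi_k}^n/\vol(\omega_k)$ of Datar--Pingali. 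The main obstacle I anticipate is purely bookkeeping rather than conceptual: getting all the signs from the embedding $\iota(\sigma_0,\dots,\sigma_k) = (\sigma_0^{-1},\sigma_0,\sigma_1,\dots,\sigma_k)$ and from pushforward-versus-pullback to line up consistently across the first slot and the $k$ lower slots, and verifying the constants genuinely agree; I would isolate these as one careful lemma on how the twisting action acts on each wedge-product expression, and then the theorem follows by applying it to each component.
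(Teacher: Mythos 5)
Your proposal is correct and follows essentially the same route as the paper: evaluate the moment map of Lemma \ref{lem: moment map of ccscK 1} at a point of the orbit, pull each component back by the appropriate diffeomorphism ($\varphi_0$ for the curvature slot, $\varphi_i\circ f_i$ for the $i$-th slot) so that everything is expressed through $\omega_{i,\varphi_i}$, invoke naturality of the Ricci form under pullback, and then specialize to $f_i=\id$, $p_i=0$. The paper presents this as a chain of componentwise-equivalent systems (so the ``iff'' is automatic), and your extra remarks on the constants and the converse are consistent with the paper's note that the $c_i$ are constant along the orbit.
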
 
\begin{proof}
	
\begin{align*}
&\left\{\begin{matrix}\displaystyle 
\sum_{i=0}^k\dfrac{(\varphi_0^{-1})^*f_i^*\varphi_i^*\omega_i^{p_i+1}\wedge \omega_0^{n-p_i-1}}{(n-p_i-1)!(p_i+1)!}-\Ric(\omega_0, J_0^{\varphi_0^{-1}})\wedge\dfrac{\omega_0^{n-1}}{(n-1)!}-c_0\dfrac{\omega_0^n}{n!}&=&0\\
c_1\dfrac{\omega_1^n}{n!}-\dfrac{{\varphi_1}_*(f_1)_*(\varphi_0^{-1})_*\omega_0^{n-p_1}\wedge \omega_1^{p_1}}{(n-p_1)!p_1!}&=&0\\
\vdots\\
c_k\dfrac{\omega_k^n}{n!}-\dfrac{{\varphi_k}_*(f_k)_*(\varphi_0^{-1})_*\omega_0^{n-p_k}\wedge \omega_k^{p_k}}{(n-p_k)!p_k!}
&=&0
\end{matrix}\right.\\
&\left\{\begin{matrix}\displaystyle 
\sum_{i=0}^k\dfrac{f_i^*\omega_{i,\varphi_i}^{p_i+1}\wedge \omega_{0,\varphi_0}^{n-p_i-1}}{(p_i+1)!(n-p_i-1)!}-\varphi_0^*\Ric(\omega_0, J_0^{\varphi_0^{-1}})\wedge\dfrac{\varphi_0^*\omega_0^{n-1}}{(n-1)!}-c_0\dfrac{\omega_{0,\varphi_0}^n}{n!}&=&0\\
\dfrac{\varphi_0^*\omega_0^{n-p_1}\wedge f_1^*\varphi_1^*\omega_1^{p_1}}{(n-p_1)!p_1!}-c_1\dfrac{f_1^*\varphi_1^*\omega_1^n}{n!}&=&0\\
\vdots\\
\dfrac{\varphi_0^*\omega_0^{n-p_k}\wedge f_k^*\varphi_k^*\omega_k^{p_k}}{(n-p_k)!p_k!}-c_k\dfrac{f_k^*\varphi_k^*\omega_k^n}{n!}
&=&0
\end{matrix}\right.\\
&\left\{\begin{matrix}\displaystyle 
\sum_{i=0}^k\dfrac{f_i^*\omega_{i,\varphi_i}^{p_i+1}\wedge \omega_{0,\varphi_0}^{n-p_i-1}}{(p_i+1)!(n-p_i-1)!}-\Ric(\omega_{0,\varphi_0}, J_0)\wedge\dfrac{\omega_{0,\varphi_0}^{n-1}}{(n-1)!}-c_0\omega_{0,\varphi_0}^n&=&0\\
\dfrac{\omega_{0,\varphi_0}^{n-p_1}\wedge f_1^*\omega_{1,\varphi_1}^{p_1}}{(n-p_1)!p_1!}-c_1\dfrac{f_1^*\omega_{1,\varphi_1}^n}{n!}&=&0\\
\vdots\\
\dfrac{\omega_{0,\varphi_0}^{n-p_k}\wedge f_k^*\omega_{k,\varphi_k}^{p_k}}{(n-p_k)!p_k!}-c_k\dfrac{f_k^*\omega_{k,\varphi_k}^n}{n!}
&=&0.
\end{matrix}\right.
\end{align*}
Finally, if $f_i=id$, and $p_i=0$, then $f_i^*\omega_{i,\varphi_i}=\omega_{i,\varphi_i}$,  the equations become 
\[\left\{\begin{matrix}\displaystyle
\sum_{i=0}^k(\omega_{i,\varphi_i} -\Ric(\omega_{0,\varphi_0}), J_0)\wedge\omega_{0,\varphi_0}^{n-1}-c_0\omega_{0,\varphi_0}^n&=&0\\
\omega_{0,\varphi_0}^{n}-c_1\omega_{1,\varphi_1}^n&=&0\\
\vdots\\
\omega_{0,\varphi_0}^{n}-c_k\omega_{k,\varphi_k}^n
&=&0,
\end{matrix}\right.\]  which is the ccscK equation. 
\end{proof}
As a  remark, $c_i$ are constants along the whole orbit. Also, we can replace $\omega_i$ by $a_i\omega_i$, so the equation becomes 
\[\left\{\begin{matrix}\displaystyle
\sum_{i=1}^ka_i\left(\dfrac{\omega_{i,\varphi_i}^{p_i+1}}{(p_i+1)!}\wedge \dfrac{\omega_{0,\varphi_0}^{n-p_i-1}}{(n-p_i-1)!}\right)-\Ric(\omega_{0,\varphi_0}, J_0)\wedge\dfrac{\omega_{0,\varphi_0}^{n-1}}{(n-1)!}-b_0\dfrac{\omega_{0,\varphi_0}^n}{n!}&=&0\\
\dfrac{\omega_{0,\varphi_0}^{n-p_1}}{(n-p_1)!}\wedge \dfrac{\omega_{1,\varphi_1}^{p_1}}{p_1!}-b_1\dfrac{\omega_{1,\varphi_1}^n}{n!}&=&0\\
\vdots\\
\dfrac{\omega_{0,\varphi_0}^{n-p_k}}{(n-p_k)!}\wedge \dfrac{\omega_{k,\varphi_k}^{p_k}}{p_k!}-b_k\dfrac{\omega_{k,\varphi_k}^n}{n!}
&=&0,
\end{matrix}\right.\] where $b_i$ are the normalizing constants.
\subsection{An alternate setup for a special case of the coupled K{\"a}hler Yang-Mills equation}
We first construct the moment map equation described in \cite{AGG13} for $U(1)$ case. Notice that to solve the equation, we first need 
\[\mu(J,f)=\alpha_0\mu_{\cJ}(J)+\alpha_1\mu_1^*(f)+\alpha_2\mu_0^*(f)\] under a suitable subspace.
We define   
\[\Omega_{\cJ,01;\alpha_0,\alpha_1,\alpha_2}=\alpha_0\Omega_{\cJ}-\alpha_1\Omega_0^*+\alpha_2\Omega_1^*,\]
that is, for $g=f^{-1}: Y\rightarrow X $
\[(\alpha_2\Omega_1^*-\alpha_1\Omega_0^*)(\delta g_1,\delta g_2)=\int_Y \omega_X(\delta g_1,\delta g_2) (\alpha_2 \omega_Y^{[2]}\wedge g^*\omega_X^{[n-2]}-(\alpha_1)\omega_Y\wedge g^*\omega_X^{[n-1]}).\] Notice that we take dual moment map as we need 
\[\int_Y (\alpha_1\omega_Y\wedge \omega_X^{[n-1]}-\alpha_2\omega_X^{[n]})=0,\] so we cannot choose this as the K\"{a}hler form. 

Also, we take \[\cYM_{01}\subset \{(J,f,g)\in \cJ(X,\omega_X)\times \map(Y,X;n-1)^+\times \map(Y,X;n-2)^+| f=g^{-1}, J^f\in \cJ(X,\omega_X) \}\] such that 
\[\Omega_{\cJ,01;\alpha_0,\alpha_1,\alpha_2}>0 \}.\] Then we have the following proposition.
\begin{prop}\label{prop: KYM for line bundle}
$\cYM_{01}$ is K{\"a}hler and closed under the action. Moreover, if
\[\alpha_0\Omega_{\cJ}-\alpha_1\Omega_0^*+\alpha_2\Omega_1^*>0,\] then the map $\mu_{\cJ,01}:\cYM_{01}\rightarrow Lie(\ham(X,\omega_X)\times\ham(Y,\omega_Y))^*$ 
\[\mu(J,f)=\left(\begin{matrix}
\alpha_0\dfrac{\Ric(X,\omega_X, J)\wedge\omega_X^{n-1}}{(n-1)!}-\alpha_1\dfrac{\omega_X^{n-1}}{(n-1)!}\wedge f^*\omega_Y+\alpha_2 \dfrac{\omega_X^{2}\wedge f^*\omega_Y^{n-2}}{(n-2)!2!}+z\omega_X^n \\
+\alpha_2\dfrac{f_*\omega_X^{n-1}}{(n-1)!}\wedge \omega_Y-\alpha_1\dfrac{f_*\omega_X^n}{n!}
\end{matrix}\right)\] is a moment map, 
where $z=\dfrac{\bar{S}}{2}-c_{10}\alpha_1-c_{11}\alpha_2$,  and here we choose $\alpha_1$ such that  $\alpha_1-\alpha_2=0$. As a corollary, $\mu_{\cJ,01}=0$ iff 
\[\left\{\begin{matrix}
\alpha_0\dfrac{\Ric(X,\omega_X, J)\wedge\omega_X^{n-1}}{(n-1)!}+\alpha_2 \dfrac{\omega_X^{2}\wedge f^*\omega_Y^{n-2}}{(n-2)!2!}=c\dfrac{\omega_x^n}{n!} \\
\dfrac{f_*\omega_X^{n-1}}{(n-1)!}\wedge \omega_Y=d\dfrac{f_*\omega_X^n}{n!}
\end{matrix}\right.,\] where $d=\dfrac{\alpha_1}{\alpha_2}$, $c=\alpha_1d+z$. 
\end{prop}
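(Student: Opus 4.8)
The plan is to run the three–step recipe of Steps~1--3 of the introduction, exactly as it was carried out for the generalized ccscK equation in Theorems~\ref{thm: Kahler moment map of ccscK general} and~\ref{thm: ccscK equation gerenal}, now starting from the three blocks $\mu_{\cJ}$ on $\cJ(X,\omega_X)$, $\mu_0$ on $\map(X,Y)$ and $\mu_1^{*}=\mu_{n-2,\omega_Y,\omega_X}$ on $\map(Y,X;n-2)^{+}$. On the product $\cM:=\cJ(X,\omega_X)\times\map(X,Y)\times\map(Y,X;n-2)^{+}$ equipped with $\alpha_0\,\pi_{\cJ}^{*}\Omega_{\cJ}-\alpha_1\,\pi^{*}\Omega_0+\tfrac{2}{n}\alpha_2\,\pi^{*}\Omega_1$, the product Hamiltonian group has moment map $(\alpha_0\mu_{\cJ},\,-\alpha_1\mu_0,\,\tfrac{2}{n}\alpha_2\mu_1^{*})$ by Theorem~\ref{thm: main theorem of moment map} and rescaling. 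I would then restrict to $\cYM_{01}\subset\cM$, invoke the combining Lemma, and compose with $\iota^{*}$ for a suitable embedding $\iota$ of $\ham(X,\omega_X)\times\ham(Y,\omega_Y)$ into the six--factor product (one copy-or-inverse choice per factor, dictated --- as in the Remark after Step~3 --- by the minus signs appearing in the displayed formula). Evaluating the resulting moment map at $(J,f,f^{-1})$ and using $g=f^{-1}$, i.e.\ $g^{*}\omega_X=f_{*}\omega_X$ and $g_{*}\omega_Y=f^{*}\omega_Y$, to transport the $\map(Y,X;n-2)^{+}$--contribution onto $X$ reproduces the stated $\mu_{\cJ,01}$; the two normalising constants of $\mu_0$ and of $\mu_1^{*}$ are by Theorem~\ref{thm: main theorem of moment map} the topological numbers $c_{10},c_{11},c_{20},c_{21}$, and requiring each component of $\mu_{\cJ,01}$ to integrate to $0$ fixes $z=\tfrac{\bar S}{2}-c_{10}\alpha_1-c_{11}\alpha_2$ from the first slot and $c_{20}\alpha_1-c_{21}\alpha_2=0$ from the second.

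Two structural facts must precede the combining Lemma. First, $\cYM_{01}$ is closed under $H=\ham(X,\omega_X)\times\ham(Y,\omega_Y)$: the action $(\sigma,\eta)\cdot(J,f,g)=(\sigma\cdot J,\eta\circ f\circ\sigma^{-1},\sigma\circ g\circ\eta^{-1})$ preserves $f=g^{-1}$ on the nose, preserves $J^{f}\in\cJ(X,\omega_1)$ by the computation in Theorem~\ref{thm: Kahler moment map of ccscK general}, and preserves $\omega_Y^{2}\wedge g^{*}\omega_X^{n-2}>0$ --- the last automatically, since on the locus $J^{f}\in\cJ(X,\omega_1)$ pulling it back by $f$ turns it into $f^{*}\omega_Y^{2}\wedge\omega_X^{n-2}>0$, a wedge of powers of the two K\"ahler forms $f^{*}\omega_Y$ and $\omega_X$. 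Second, $\cYM_{01}$ is a complex submanifold: as in Lemma~\ref{lem: Y is complex}, the map $(J,f,g)\mapsto(J,J^{f},g\circ f)$ into $\cJ(X,\omega_X)\times\cJ(X,\omega_1)\times\map(X,X)$ is holomorphic for $\hat J$ and $\cYM_{01}$ is the preimage of $\cJ(X,\omega_X)\times\cJ(X,\omega_1)\times\{\mathrm{id}\}$; equivalently $\cYM_{01}$ is the graph of $f\mapsto f^{-1}$ over the space $\cY_{0}$ of Step~3, already known to be complex.

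The hard part --- and the reason for the qualification ``by choosing suitable $\alpha_0,\alpha_1,\alpha_2$'' --- is positivity of $\alpha_0\Omega_{\cJ}-\alpha_1\Omega_0+\tfrac{2}{n}\alpha_2\Omega_1$ on $\cYM_{01}$: because of the minus sign in front of $\Omega_0$, the ``sum of positive forms'' argument used in Theorem~\ref{thm: Kahler moment map of ccscK general} fails. The key observation is that the constraint $g=f^{-1}$ forces the $\Omega_0$-- and $\Omega_1$--contributions onto a single set of tangent directions --- the $\delta f$ directions, with $\delta g=-Dg\cdot\delta f$ --- so one is reduced to showing that $\tfrac{2}{n}\alpha_2\Omega_1-\alpha_1\Omega_0$ is positive there. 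By $\hat J$--invariance of $f^{*}\omega_Y$ one has $\Omega_0(\delta f,\hat J\delta f)=\tfrac{1}{n!}\int_X|\delta f|^{2}_{g_{J^{f}}}\,\omega_X^{n}$, and transporting the $\Omega_1$--term onto $X$ (using $g=f^{-1}$ and $J\in\cJ(X,\omega_X)$) gives $\Omega_1(\delta f,\hat J\delta f)=\int_X Q(\delta f,\delta f)\,f^{*}\omega_Y^{2}\wedge\omega_X^{n-2}$ for a pointwise positive-definite $Q$; both contributions are thus pointwise inner products weighted by positive densities, so on a fixed orbit the difference becomes positive once $\alpha_2/\alpha_1$ is taken large (with $\alpha_0>0$ arbitrary). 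Making this pointwise comparison precise, and identifying the open invariant locus and the range of $(\alpha_0,\alpha_1,\alpha_2)$ on which it holds, is the one genuinely delicate step; after that the combining Lemma yields the moment map and its formula, and the ``iff'' corollary is pure algebra --- setting the second component to zero gives $\tfrac{f_{*}\omega_X^{n-1}}{(n-1)!}\wedge\omega_Y=d\,\tfrac{f_{*}\omega_X^{n}}{n!}$ with $d=\alpha_1/\alpha_2$, and pulling this back by $f$ and substituting into the first component turns the latter into $\alpha_0\tfrac{\Ric(X,\omega_X,J)\wedge\omega_X^{n-1}}{(n-1)!}+\alpha_2\tfrac{\omega_X^{2}\wedge f^{*}\omega_Y^{n-2}}{(n-2)!2!}=c\,\omega_X^{n}$ with $c=\alpha_1 d+z$.
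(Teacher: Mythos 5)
Your structural steps --- closure of $\cYM_{01}$ under the action, complexity via the graph of $f\mapsto f^{-1}$ over $\cY_0$, the combining lemma followed by $\iota^{*}$ for a sign-adjusting embedding, the bookkeeping of $c_{10},c_{11},c_{20},c_{21}$ and of $z$, and the purely algebraic derivation of the corollary --- all match what the paper does; its own proof is essentially ``notice that $\cYM_{01}\cong\cY_0$'' plus a check of $J$-invariance of each summand, so on those points you have if anything supplied more detail than the original.

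The divergence, and the genuine gap, is in how you handle the minus sign in front of $\Omega_0$. You propose to prove $\tfrac{2}{n}\alpha_2\Omega_1-\alpha_1\Omega_0>0$ on the $\delta f$-directions by taking $\alpha_2/\alpha_1$ large. This cannot be closed as stated: after transporting everything to $X$, the two quadratic forms are weighted by the densities $\omega_X^{n}$ (for $\Omega_0$) and $f^{*}\omega_Y^{2}\wedge\omega_X^{n-2}$ (for the transported $\Omega_1$), and the ratio of the second to the first is a positive function with no uniform lower bound over $\cYM_{01}$, nor even over a single complexified orbit, which is not compact; moreover the two quadratic forms in $\delta f$ are taken with respect to metrics related by $Df^{-1}$, whose norm is likewise unbounded. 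Hence no fixed choice of $\alpha_2/\alpha_1$ makes the difference positive on the whole domain, and ``suitable $\alpha_i$'' cannot mean ``sufficiently large $\alpha_2$''. The paper sidesteps the issue entirely: under the identification $g=f^{-1}$ the summand $-\alpha_1\Omega_0$ is replaced by a positive multiple of the dual form $\Omega_{n-1,\omega_Y,\omega_X}$ on $\map(Y,X;n-1)^{+}$ (written $\Omega_0^{*}$ in the proof), which is a positive $(1,1)$-form for the induced complex structure $J'$ there --- one checks $J'Dg=DgJ^{f}$, so $J'$ is conjugate to $J\in\cJ(X,\omega_X)$ --- while still producing the moment map $-\tfrac1n\mu_0$, by the earlier computation $\mu_0^{*}(f^{-1})=-\tfrac1n\mu_0(f)$. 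With that reading the combined form is a sum of positive invariant forms and is K\"ahler for every positive $\alpha_0,\alpha_1,\alpha_2$, with no largeness condition. You should replace your domination argument by this dualization; the rest of your outline then goes through.
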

\begin{proof}
	Notice that $\cYM_{01}$ is a submanifold of $\cY_0$, so the complex structure is defined directly by $\cY_0$. Then $\Omega_{\cJ}$ is $J$ invariant and $\Omega_0$ is $J^f$ invariant. Also, if we  define $inv:\map(X,Y)\rightarrow \map(Y,X)$, and we define $J'$ on $\map(Y,X)$ such that  \[inv_*( J^f\delta f)=J' inv_*\delta f,\] then 
	\[J'Df^{-1}= Df^{-1}J^f=JDf^{-1}.\] That means the map $inv$ is a biholomorphism, and $\Omega^*$ is also K\"{a}hler if $\Omega$ is. As $\omega_X$ is $J$-invariant, $\Omega_1$ is also $J$-invariant, hence 
	\[\Omega_{\cJ,01;\alpha_0,\alpha_1,\alpha_2}=\alpha_0\Omega_{\cJ}-\alpha_1\Omega_0^*+\alpha_2\Omega_1^*\]  is $J$-invariant, which implies $\cYM_{01}$ is K{\"a}hler.
	
	Moreover, the moment map is given by
 \begin{align*}
&\mu_{\cJ,{01}}(J,f)\\
=&\left(\begin{matrix}
\alpha_0\dfrac{\Ric(X,\omega_X, J)\wedge\omega_X^{n-1}}{(n-1)!}-\alpha_1\dfrac{\omega_X^{n-1}}{(n-1)!}\wedge f^*\omega_Y+\alpha_2 \dfrac{\omega_X^{2}\wedge f^{-1}_*\omega_Y^{n-2}}{(n-2)!2!}-\left(\dfrac{\bar{S}}{2}-c_{10}\alpha_1-c_{11}\alpha_2\right)\omega_X^n \\
\alpha_2\dfrac{{f^{-1}}^*\omega_X^{n-1}}{(n-1)!}\wedge \omega_Y-\alpha_1\dfrac{f_*\omega_X^n}{n!}-\left(c_{20}\alpha_1-c_{21}\alpha_2\right)\dfrac{\omega_Y^n}{n!}
\end{matrix}\right)\\
=& \left(\begin{matrix}
\alpha_0\dfrac{\Ric(X,\omega_X, J)\wedge\omega_X^{n-1}}{(n-1)!}-d\dfrac{\omega_X^{n-1}}{(n-1)!}\wedge f^*\omega_Y+\alpha_2 \dfrac{\omega_X^{2}\wedge f^*\omega_Y^{n-2}}{(n-2)!2!}+z\omega_X^n \\
\alpha_2\dfrac{f_*\omega_X^{n-1}}{(n-1)!}\wedge \omega_Y-d\dfrac{f_*\omega_X^n}{n!}
\end{matrix}\right),
\end{align*} 
where $z=\dfrac{\bar{S}}{2}-c_{10}\alpha_1-c_{11}\alpha_2$,  and we choose $d=\alpha_1$ such that  $c_{20}\alpha_1-c_{21}\alpha_2=0$.

The last part is obvious.
\end{proof}
Hence we get the same  moment map equation for the K{\"a}hler Yang-Mill's equation with $G=U(1)$ case (see \cite{AGG13} for general). As a remark, we can easily generalize it into $U(1)^n$ case. We can generalized this moment map by considering the following equation: 
\[\Omega_{\cJ,pq}:=\alpha_0\Omega_{\cJ}-\dfrac{\alpha_1(n-p)}{n}\Omega_p^*+\dfrac{\alpha_2(q+1)}{n}\Omega_q^*.\] 
Define  \[\cYM_{pq}\subset \{(J,f,g)\in \cJ(X,\omega_0)\times\map(Y,X;n-p)^+\times \map(Y,X;n-q-1)^+|g=f^{-1}, J^f\in\cJ(Y,\omega_Y) \}\] such that $\Omega_{\cJ,pq}>0$. That is, $f\in \map(X,Y;p)^+\cap\map(X,Y;q)^+$. Then $\cYM_{pq}$ is closed under the action of $\ham(X,\omega_X)\times\ham(Y,\omega_Y)$, and the map \[\mu_{\cJ,p,q,\alpha_0,\alpha_1,\alpha_2}(J,f):
=\alpha_0\mu_{\cJ}(J)-\dfrac{\alpha_1(n-p)}{n}\mu_p(f)+\dfrac{\alpha_2(q+1)}{n}\mu_q(f)\] is the moment map for 

\begin{align*}
\mu_{\cJ,pq}(J,f)=&\left(\begin{matrix}
\alpha_0\dfrac{\Ric(X,\omega_X, J)\wedge\omega_X^{n-1}}{(n-1)!}+\alpha_1\dfrac{\omega_X^{n-p-1}}{(n-p-1)!}\wedge \dfrac{f^*\omega_Y^{p+1}}{(p+1)!}-\alpha_2 \dfrac{\omega_X^{n-q-1}\wedge f^{-1}_*\omega_Y^{q+1}}{(n-q-1)!(q+1)!}+z\omega_X^n \\
-\alpha_2\dfrac{{f^{-1}}^*\omega_X^{n-q}}{(n-q)!}\wedge \dfrac{\omega_Y^q}{q!}+\alpha_1\dfrac{f_*\omega_X^{n-p}}{(n-p)!}\wedge\dfrac{\omega_Y^{p}}{p!}-\left(c_{20}\alpha_1-c_{21}\alpha_2\right)\dfrac{\omega_Y^n}{n!}
\end{matrix}\right).
\end{align*}
Moreover, we can choose $\alpha_1$ such that $c_{20}\alpha_1-c_{21}\alpha_2=0$. \\

\subsection{Coupled DHYM types equation}
In \cite{ScSt19}, Schlitzer and Stoppa studied  coupled Deformed Hermitian Yang-Mills equation using Extended Gauged group theory. We now using the theory in this note to recover the coupled  DHYM equation.\\

Recall that the DHYM equation is given by the following: Let $(X,\omega,L)$ be a projective manifold, and $\alpha=\sqrt{-1}F(L)$. Then the DHYM is given by 
\[Im(e^{\sqrt{-1}\theta}(\omega+\sqrt{-1}\alpha))^n=0\] 
with $Re(e^{\sqrt{-1}\theta}(\omega+\sqrt{-1}\alpha))^n>0$.
Here $\theta$ is some constant defined by the class of $\omega$ and $\alpha$. 
Expend the expression $e^{\sqrt{-1}\theta}(\omega+\sqrt{-1}\alpha)^n$, we get 
\[\text{Imarginay part}: \cos\theta \sum_{r=0}^k (-1)^rC_{2r+1}^n\omega^{n-2r-1}\wedge \alpha^{2r+1}+\sin\theta  \sum_{r=0}^{k}(-1)^{r}C_{2r}^n\omega^{n-2r}\wedge \alpha^{2r}=0;\]
\[\text{Real part}: \cos\theta \sum_{r=0}^{k}(-1)^{r}C_{2r}^n\omega^{n-2r}\wedge \alpha^{2r}-\sin\theta \sum_{r=0}^k (-1)^rC_{2r+1}^n\omega^{n-2r-1}\wedge \alpha^{2r+1}>0.\]
Here $k$ is the value such that the $2k=n-1$ or $n$.  

Under the previous construction, consider
\[\sum_{r=0}^{k}(-1)^r\cos\theta C_{2r}^n\mu_{2r}-\sin\theta\sum_{s=0}^l(-1)^{r}C_{2r+1}\mu_{2r+1}, \]
where $k$ is chosen such that $2k\leq n-1$, $2l+1\leq n-1 $  under the domain 
\[\cY_{dHYM}\subset \bigcap_{p=0}^{n-1} \map(X,Y;p)^+\] such that 
\[\cos\theta \sum_{r=0}^{k}(-1)^{r}C_{2r}^n \omega^{n-2r}\wedge f^*\alpha^{2r}-\sin\theta \sum_{r=0}^k (-1)^rC_{2r+1}^n\omega^{n-2r-1}\wedge f^*\alpha^{2r+1}>0.\]

Suppose this space is non empty, 
the equation is given by 
\[\begin{matrix}
\displaystyle \cos\theta \sum_{r=0}^{k}(-1)^{r}C_{2r}^n \omega^{n-2r-1}\wedge f^*\alpha^{2r+1}-\sin\theta \sum_{r=0}^{k-1} (-1)^rC_{2r+1}^n\omega^{n-2r-2}\wedge f^*\alpha^{2r+2} &=& c_1\omega^n\\
\displaystyle  \cos\theta \sum_{r=0}^{k}(-1)^{r}C_{2r}^n f_*\omega^{n-2r}\wedge \alpha^{2r}-\sin\theta \sum_{r=0}^k (-1)^rC_{2r+1}^n f_*\omega^{n-2r-1}\wedge \alpha^{2r+1} &=& c_2\alpha^n
\end{matrix}\]
Rewrite it, we get 
\[\begin{matrix}
\displaystyle  \cos\theta \sum_{r=0}^{k}(-1)^{r}C_{2r}^n \omega^{n-2r-1}\wedge f^*\alpha^{2r+1}+\sin\theta \sum_{r=1}^{k} (-1)^rC_{2r+1}^n\omega^{n-2r}\wedge f^*\alpha^{2r} &=&c_1\omega^n\\
\displaystyle  \cos\theta \sum_{r=0}^{k}(-1)^{r}C_{2r}^n f_*\omega^{n-2r}\wedge \alpha^{2r}-\sin\theta \sum_{r=0}^{k} (-1)^rC_{2r+1}^n f_*\omega^{n-2r-1}\wedge \alpha^{2r+1} &=& c_2\alpha^n
\end{matrix},\]
where $k$ is the value such that the $2k=n-1$ or $n$. 
Notice that the 2 form
\[\Omega(\delta f_1, \delta f_2)= \int_X \alpha(\delta f_1, \delta f_2)\left(\cos\theta \sum_{r=0}^{k}(-1)^{r}C_{2r}^n f_*\omega^{n-2r}\wedge \alpha^{2r}-\sin\theta \sum_{r=0}^{k} (-1)^rC_{2r+1}^n f_*\omega^{n-2r-1}\wedge \alpha^{2r+1}\right)\] define a positive symplectic form iff 
\[\cos\theta \sum_{r=0}^{k}(-1)^{r}C_{2r}^n\omega^{n-2r}\wedge \alpha^{2r}-\sin\theta \sum_{r=0}^k (-1)^rC_{2r+1}^n\omega^{n-2r-1}\wedge \alpha^{2r+1}>0.\]
Hence, when this $\Omega$, we get the domain of the moment map. If we also restrict the subgroup to be  $\ham(X,\omega)$,  it is the DHYM equation. So we can recover a moment map set up in \cite{CXY17}. However, we cannot recover the coupled DHYM using this moment map as we will couple the scalar curvature with the imginary part, not the real part. 

To recover the setup of the coupled deformed HYM, we consider another setup, namely, 
\[\mu_{\cJ}+\cos\theta \sum_{s=0}^l (-1)^sC_{2s+1}^n\mu_{2s+1}+\sin\theta  \sum_{r=0}^{k}(-1)^{r}C_{2r}^n\omega^{n-2r}\mu_{2r};\]
where $k,l$ is chosen such that $2k\leq n-1$, $2l+1\leq n-1 $. Denote the space as $\cY_{dHYM}'$ similar to the definition of $\cY_{dHYM}$, and we can define 
\[\cY_{cdHYM}\subset \cJ_{int}\times \cY_{dHYM}'\] to be the largest K\"{a}hler submanifold which is closed under the orbit similar to the setup of gerenal ccscK. Then the resulting moment map equation is given by 
\[\begin{matrix}
\displaystyle  Ric(\omega,J)\wedge\omega^{n-1}+ \cos\theta \sum_{r=0}^{k}(-1)^{r}C_{2r}^n \omega^{n-2r}\wedge f^*\alpha^{2r}-\sin\theta \sum_{r=0}^{k} (-1)^rC_{2r+1}^n \omega^{n-2r-1}\wedge f^*\alpha^{2r+1} &=& c_1\omega^n\\
\displaystyle  \cos\theta \sum_{r=0}^{k}(-1)^{r}C_{2r}^n f_*\omega^{n-2r-1}\wedge \alpha^{2r+1}+\sin\theta \sum_{r=1}^{k} (-1)^rC_{2r+1}^nf_*\omega^{n-2r}\wedge \alpha^{2r} &=&c_2\alpha^n
\end{matrix}\]
if $c_2$ is positive. In particular, if  we consider the orbit space \[ (\ham(X,\omega)\times \ham(X,\alpha))^{\CC}\cdot \{f_0=id\},\] then the equation can be reformulated as
\[\begin{matrix}\displaystyle  Ric(\omega_{\varphi})\wedge\omega_{\varphi}^{n-1}+ \cos\theta \sum_{r=0}^{k}(-1)^{r}C_{2r}^n \omega_{\varphi}^{n-2r}\wedge \alpha_{\psi}^{2r}-\sin\theta \sum_{r=0}^{k} (-1)^rC_{2r+1}^n \omega_{\varphi}^{n-2r-1}\wedge \alpha_{\psi}^{2r+1} &=& c_1\omega_{\varphi}^n\\
\displaystyle  \cos\theta \sum_{r=0}^{k}(-1)^{r}C_{2r}^n \omega_{\varphi}^{n-2r-1}\wedge \alpha_{\psi}^{2r+1}+\sin\theta \sum_{r=1}^{k} (-1)^rC_{2r+1}^n\omega_{\varphi}^{n-2r}\wedge \alpha_{\psi}^{2r} &=&c_2\alpha_{\psi}^n
\end{matrix}.\]
Finally, to avoid the sign problem, we may replace all $-\mu_r$ into $\mu_r^*$, then we can make sure the $\Omega$ is positive.
\section{Application}\label{sec: application}

\subsection{Obstructions on solving generalized ccscK}
For moment maps on the complexified orbit, there are some standard results (see \cite{Wang04}). For example, we can define the Futaki invariant, Calabi functional and Mabuchi functional that can provide some obstructions of the moment map equation $\mu=0$ (see \cite{Don01}, \cite{Don02}, \cite{PS2009} for cscK, \cite{DaPi19} for ccscK, and \cite{AGG13} for K{\"a}hler Yang Mill). We will consider the generalized ccscK equation 
\[\mu_{\cJ,\vec{p}}:\cO_{\cJ,id}\rightarrow Lie\left(\prod_{i=0}^k\ham(X_i,\omega_i)\right)^*.\]

For fix $f_i$, we can define a map $f_i^*:\diff(X_i, X_i)\rightarrow \diff(X_0,X_0)$ by 
\[f_i^*\varphi:= f_i^{-1}\circ\varphi \circ f_i.\] We also denote $(f_i)_*=(f_i^{-1})^*$. Then we can define 
\[G_0^{\vec{f}}:=Aut(X_0,L_0)\cap \cap_{i=1}^kf_i^*Aut(X_i,L_i),\] and 
\[G_j^{\vec{f}}=(f_j)_*G_0^{\vec{f}}.\] 

\begin{lemm}
$G_j^{\vec{f}}$ are subgroup of $Aut(X_j,L_j)$. Moreover, the embedding map 
\[\displaystyle\iota: G_0^{\vec{f}}\rightarrow \prod_{i=0}^kAut(X_i,L_i)\] defined by 
\[\iota(\varphi)=(\varphi, (f_1)_*\varphi,\cdots, (f_k)_*\varphi)\] is an homomorphism, and $G_0^{\vec{f}}$ is the stabilizer of $(J,\vec{f})$ as a subgroup of $\displaystyle\prod_{i=0}^kAut(X_i,L_i)$.
\end{lemm}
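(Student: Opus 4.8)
The plan is to verify the three assertions in sequence, each reducing to a direct unwinding of definitions. First I would show each $G_j^{\vec f}$ is a subgroup of $\aut(X_j,L_j)$. Since $G_0^{\vec f}=\aut(X_0,L_0)\cap\bigcap_{i=1}^k f_i^*\aut(X_i,L_i)$ is an intersection of subgroups of $\diff(X_0,X_0)$ — each $f_i^*\aut(X_i,L_i)$ being the conjugate of a subgroup by the fixed diffeomorphism $f_i$, hence itself a subgroup — it is a subgroup of $\aut(X_0,L_0)$. Then $G_j^{\vec f}=(f_j)_*G_0^{\vec f}$ is the image of a subgroup under the conjugation isomorphism $(f_j)_*=(f_j^{-1})^*$, so it is a subgroup of $(f_j)_*\aut(X_0,L_0)$; and from the defining condition $\varphi\in f_j^*\aut(X_j,L_j)$ for $\varphi\in G_0^{\vec f}$ we get $(f_j)_*\varphi\in \aut(X_j,L_j)$, so $G_j^{\vec f}\subset\aut(X_j,L_j)$ as claimed.

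Next I would check that $\iota(\varphi)=(\varphi,(f_1)_*\varphi,\dots,(f_k)_*\varphi)$ is a homomorphism. Each component $(f_i)_*=(f_i^{-1})^*$ is conjugation by the fixed map $f_i^{-1}$, i.e. $(f_i)_*\varphi=f_i\circ\varphi\circ f_i^{-1}$, which is multiplicative in $\varphi$; the product of homomorphisms into the direct product $\prod_i\aut(X_i,L_i)$ is again a homomorphism, and the image lands in that product by the first part. For injectivity, the first coordinate already recovers $\varphi$.

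Finally, for the stabilizer claim I would compute the action of $\iota(\varphi)$ on $(J,f_1,\dots,f_k)$ using the formula from Theorem~\ref{thm: Kahler moment map of ccscK general}, namely $(\sigma_0,\dots,\sigma_k)\cdot(J,f_1,\dots,f_k)=(D\sigma_0 J D\sigma_0^{-1},\ \sigma_1\circ f_1\circ\sigma_0^{-1},\dots,\sigma_k\circ f_k\circ\sigma_0^{-1})$. With $\sigma_0=\varphi$ and $\sigma_j=(f_j)_*\varphi=f_j\circ\varphi\circ f_j^{-1}$, the $j$-th map-component becomes $\sigma_j\circ f_j\circ\sigma_0^{-1}=f_j\circ\varphi\circ f_j^{-1}\circ f_j\circ\varphi^{-1}=f_j\circ\varphi\circ\varphi^{-1}=f_j$, so $\iota(\varphi)$ fixes each $f_j$ exactly when this cancellation is meaningful, i.e. automatically; and the $\cJ$-component $D\varphi\, J\, D\varphi^{-1}$ equals $J$ precisely because $\varphi\in\aut(X_0,L_0)$ preserves the complex structure $J_0$ (acting holomorphically), while the membership $\varphi\in f_i^*\aut(X_i,L_i)$ is exactly what is needed for the conjugated structures $J^{f_i}$ to be preserved — equivalently for $\sigma_j$ to act holomorphically on $X_j$. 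Conversely, if $(\sigma_0,\dots,\sigma_k)\in\prod_i\aut(X_i,L_i)$ stabilizes $(J,\vec f)$, then $D\sigma_0 J D\sigma_0^{-1}=J$ forces $\sigma_0\in\aut(X_0,L_0)$ preserving $J_0$, and $\sigma_j\circ f_j\circ\sigma_0^{-1}=f_j$ forces $\sigma_j=f_j\circ\sigma_0\circ f_j^{-1}=(f_j)_*\sigma_0$; combined with $\sigma_j\in\aut(X_j,L_j)$ this gives $\sigma_0\in f_j^*\aut(X_j,L_j)$ for every $j$, hence $\sigma_0\in G_0^{\vec f}$ and $(\sigma_0,\dots,\sigma_k)=\iota(\sigma_0)$.

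I expect the main obstacle to be bookkeeping rather than conceptual: one must be careful about whether $\aut(X_i,L_i)$ is taken as automorphisms of the polarized manifold (preserving $J_i$ and lifting to $L_i$) versus merely symplectomorphisms, and ensure the identification of $(f_i)_*$ with honest conjugation $f_i\circ(\cdot)\circ f_i^{-1}$ is consistent with the push-forward/pull-back conventions used for the $\cZ_i$ action throughout Section~\ref{sec: ccscK}. Once those conventions are pinned down, every step is a one-line diffeomorphism identity.
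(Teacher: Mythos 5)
Your proposal is correct and follows essentially the same route as the paper: the subgroup claim via conjugation $f_i\circ(\cdot)\circ f_i^{-1}$, the homomorphism property componentwise, and the stabilizer claim by the cancellation $\sigma_j\circ f_j\circ\sigma_0^{-1}=f_j$ together with the converse $\sigma_j=f_j\circ\sigma_0\circ f_j^{-1}$. Your treatment is in fact slightly more complete than the paper's, since you also explicitly check the $\cJ$-component $D\varphi\,J\,D\varphi^{-1}=J$ of the stabilizer condition, which the paper leaves implicit.
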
  
\begin{proof}
 Let $\varphi,\psi \in f_i^*Aut(X_i,L_i)$ Then $f_i \circ\varphi\circ f_i^{-1}, f_i\circ\psi\circ f_i^{-1}\in Aut(X_i,L_i)$. Then \[(f_i \circ\varphi\circ f_i^{-1})\circ(f_i\circ\psi\circ f_i^{-1})^{-1}=f_i\circ\varphi\circ\psi^{-1}\circ f_i^{-1}\in Aut(X_i,L_i),\] hence $\varphi\circ \psi^{-1}\in f_i^*Aut(X_i,L_i)$.

For the second part, first, 
\[\pi_i(\iota(\varphi)\iota(\psi)^{-1})=(f_i\circ \varphi \circ f_i^{-1})\circ (f_i\circ \psi \circ f_i^{-1})^{-1}=f_i\circ\varphi\circ\psi^{-1}\circ f_i^{-1}=\pi_i(\iota(\varphi\circ\psi^{-1})).\] 

It is well known that we can identify $Aut(X_0, L_0)$ with $G_J$.  We can identify $f\in\mathfrak{g}_J$ with $\xi_f\in \mathfrak{aut}(X_0,L_0)$  defined by 
\[\mathfrak{g}_J:=\{f\in \mathfrak{g}_J^{\CC}| \dbar \xi_{f}=0, \iota_{\xi_f}\omega=df.\}\]  Also, for $\varphi\in G_0^{\vec{f}}$, 
\[((f_i)_*\varphi)\cdot f_i= f_i\circ \varphi\circ f_i^{-1}\circ f_i\circ \varphi^{-1}=f_i.\] 

Finally, if $\displaystyle (\varphi_0,\cdots,\varphi_k)\in \prod_{i=0}^kAut(X_i,L_i)$ such that $\varphi_i\circ f_i\circ \varphi_0^{-1}=f_i$, then 
\[\varphi_i= f_i\circ \varphi_0\circ f_i^{-1},\] which implies $\varphi_0\in G_0^{\vec{f}}$. 
\end{proof}

\begin{coro}\label{cor: Aut is reductive}
	Suppose $(X_i,L_i)$ is a projective manifold with line bundles with their respective curvatures $\omega_0,...,\omega_k$. Suppose generalized ccscK has a solution, then $\displaystyle \bigcap_{i=0}^kAut(X_i,L_i)$ is reductive.
\end{coro}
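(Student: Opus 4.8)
The plan is to exhibit a solution of generalized ccscK as a zero of the moment map $\mu_{\cJ,\vec p}$ somewhere on the complexified orbit $\cO_{\cJ,id}$, and then to run the standard Matsushima-type argument in its moment-map form (\cite{Don00}, \cite{Don01}, \cite{Wang04}) at that point.

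First I would use Theorem \ref{thm: ccscK equation gerenal}, applied with $\vec f=(id,\dots,id)$: a solution $(\varphi_0,\dots,\varphi_k)$ of the generalized ccscK system is the same datum as a point $q=(\varphi_0,\dots,\varphi_k)\cdot(J,id,\dots,id)\in\cO_{\cJ,id}$ at which $\mu_{\cJ,\vec p}(q)=0$. By Theorem \ref{thm: Kahler moment map of ccscK general}, $\cO_{\cJ,id}$ lies inside the K\"ahler manifold $(\cY_{\vec p},\Omega_{J,\vec p},\hat J)$ and carries an isometric Hamiltonian action of the compact group $G:=\prod_{i=0}^k\ham(X_i,\omega_i)$. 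By the preceding lemma, the stabilizer of the base point $p=(J,id,\dots,id)$ inside $\prod_{i=0}^k\mathrm{Aut}(X_i,L_i)$ is precisely $\bigcap_{i=0}^k\mathrm{Aut}(X_i,L_i)$, and under the usual identification of $\mathrm{Aut}(X_i,L_i)$ with the ``complexified Hamiltonian group'' it is the complexification of the compact stabilizer $G_p$.

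The heart of the argument is the infinitesimal Cartan decomposition of the isotropy algebra at $q$. Write an element of the stabilizer Lie algebra in the form $\xi+\hat J\eta$ with $\xi,\eta\in\mathrm{Lie}(G)$, so that the hypothesis reads $\xi^{\#}_q+\hat J\,\eta^{\#}_q=0$. Pairing $\mu_{\cJ,\vec p}$ with $\eta$ and differentiating along $\hat J\eta^{\#}$, then using equivariance of $\mu_{\cJ,\vec p}$, the relation $\mu_{\cJ,\vec p}(q)=0$, and the K\"ahler identity $g=\Omega_{J,\vec p}(\cdot,\hat J\cdot)$, one finds $\|\eta^{\#}_q\|_g^2=\pm h_{[\xi,\eta]}(q)=0$; hence $\eta^{\#}_q=0$ and then $\xi^{\#}_q=0$. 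Thus the stabilizer of $q$ equals the complexification of its compact part, so it is a reductive complex Lie group. Since $q$ and $p$ lie on the same orbit, their isotropy groups are conjugate (equivalently, the element of the complexified orbit carrying $p$ to $q$ realizes an isomorphism between them), so $\bigcap_{i=0}^k\mathrm{Aut}(X_i,L_i)=\mathrm{Stab}(p)$ is also the complexification of a compact group; being in any case a finite-dimensional complex algebraic group (an intersection of the algebraic groups $\mathrm{Aut}(X_i,L_i)$), it is reductive.

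The main obstacle is not this computation but the infinite-dimensional bookkeeping behind it: $\ham_{J}^{\CC}(X,\omega)$ is not a genuine group (Remark after Theorem \ref{thm: Kahler moment map of ccscK general}), so ``complexified orbit'' and ``conjugacy of stabilizers'' must be made precise through the honest action of $\prod_i\mathrm{Diffeo}(X_i)$ and through the identification of the isotropy Lie algebras with spaces of holomorphic vector fields, exactly as in \cite{Don01} and \cite{Wang04}; once $q$ has been moved back to $p$ in this way and the isotropy is read off on the finite-dimensional group $\prod_i\mathrm{Aut}(X_i,L_i)$, the decomposition above applies verbatim. A minor point to verify along the way is that $\Omega_{J,\vec p}$ restricted to $\cO_{\cJ,id}$ is positive in the directions used, which is provided by Theorem \ref{thm: Kahler moment map of ccscK general}.
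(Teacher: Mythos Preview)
Your proposal is correct and follows essentially the same route as the paper. The paper's own proof is a two-line appeal to \cite{Wang04}, Corollary~12, together with the preceding lemma identifying the stabilizer at the base point $(J,id,\dots,id)$ with $\bigcap_{i=0}^k\mathrm{Aut}(X_i,L_i)$; you have simply unpacked what that citation contains, namely the Cartan-decomposition argument at a zero of the moment map, and made explicit the passage from the stabilizer at the solution $q$ back to the stabilizer at the base point (the paper handles this implicitly by replacing the $\omega_i$ with the solution metrics so that the base point itself becomes the zero).
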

\begin{proof}
	We use the result in \cite{Wang04}, corollary 12. Suppose $\mu(J_X^{\varphi_0^{-1}},\varphi_0,\varphi_1,...,\varphi_k)=0$ has a solution. Then $G_{\varphi_0,\varphi_1,...,\varphi_k}^{\CC}$ is reductive. By assuming $(\omega_{0,h_0},\cdots,\omega_{k,h_k})$ be the solution, we have $f_i=id$ and 
	\[\displaystyle G_0^{\vec{f}}=G_0^{id}=\bigcap_{i=0}^kAut(X_i,L_i)\]   is reductive.
\end{proof}

We can also define the Calabi functional, Futaki invariant and Mabuchi functional as follow. 
\begin{defi}\label{def: Futaki invariant}
Let $\xi=(\xi_0,...,\xi_{k})$ be an $\CC^*$ action on $\displaystyle\prod_{i=0}^kAut(X_i,L_i)$, where 
\[\iota_{\xi_i}\omega_i=dh_i, \dbar_{J_i}{\xi_i(t)}=0.\] Then the Futaki  invariant for the moment map defined in Theorem \ref{thm: ccscK equation gerenal} is defined by 
\begin{align*}
F_{\cJ,\vec{p}}(\xi):=&\langle \mu_{\cJ,\vec{p}}(f), \xi\rangle\\
=&\int_{X_0}h_0\left(\sum_{i=1}^k \dfrac{f_{i,0}^*\omega_i^{p_i+1}\wedge \omega_0^{n-p_i-1}}{(p_i+1)!(n-p_i-1)!}-\Ric(\omega_0)\wedge \dfrac{\omega_0^{n-1}}{(n-1)!}-c_0\dfrac{\omega_0^n}{n!}\right)\\
&+\sum_{i=1}^k\int_{X_i}h_i\left(\dfrac{\omega_i^{p_i}\wedge {f_{i,0}}_*\omega_0^{n-p_i}}{p_i!(n-p_i)!}-c_i\dfrac{\omega_i^n}{n!}\right).
\end{align*}
 The Futaki invariant of the ccscK equation is the case  $f_{i,0}=id$ and $p_i=0$ for all $i=1,...,k$.
\end{defi}
Again, by the standard result (for example, see proposition 6 in \cite{Wang04}, theorem 3.9 in \cite{LSW22} for the independence; or see \cite{Fut83} for the KE case), we have
\begin{coro}\label{cor: solution exists implies Futaki invariant vanish}
Futaki invariant is independent of the choice of $\omega_i$ with the given class. Moreover, if the Futaki invariant is non zero for some holomorphic vector field, then this moment map equation has no solution in the given K{\"a}hler classes. 
\end{coro}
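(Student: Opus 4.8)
The plan is to deduce the obstruction assertion from the well-definedness assertion, and to establish well-definedness by a direct variational computation of Futaki's type. For the reduction: suppose $\mu_{\cJ,\vec p}$ has a zero, realized by Kähler potentials giving metrics $\omega_{0,\varphi_0},\dots,\omega_{k,\varphi_k}$ on $X_0,\dots,X_k$ in the prescribed classes. Then every component of $\mu_{\cJ,\vec p}$ vanishes identically as a top-degree form on the corresponding $X_i$, so by the defining formula of Definition~\ref{def: Futaki invariant} the number $F_{\cJ,\vec p}(\xi)=\langle\mu_{\cJ,\vec p},\xi\rangle$, evaluated at this point, is a sum of integrals of functions against the zero form; hence $F_{\cJ,\vec p}(\xi)=0$. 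Granting that $F_{\cJ,\vec p}(\xi)$ does not depend on which representatives of the Kähler classes are used, this forces $F_{\cJ,\vec p}(\xi)=0$ whenever a solution exists; contrapositively, if $F_{\cJ,\vec p}(\xi)\neq 0$ for some holomorphic vector field $\xi$, no solution exists in the given classes. So the corollary reduces to the independence statement.

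For independence I would fix $\xi=(\xi_0,\dots,\xi_k)$ with $\dbar_{J_i}\xi_i=0$ and $\iota_{\xi_i}\omega_{i,\varphi_i}=dh_i$, choose a smooth path $\omega_{i,\varphi_i(t)}=\omega_i+\sddb\varphi_i(t)$ (equivalently a path in the orbit $\cO_{J,\vec f}$), and show $\tfrac{d}{dt}F_{\cJ,\vec p}(\xi)=0$. The variations entering the computation are of three standard kinds: $\tfrac{d}{dt}\,\omega_{i,\varphi_i(t)}^{m}=m\,\sddb\dot\varphi_i\wedge\omega_{i,\varphi_i(t)}^{m-1}$ and $\tfrac{d}{dt}f_i^*\omega_{i,\varphi_i(t)}^{m}=m\,f_i^*(\sddb\dot\varphi_i)\wedge f_i^*\omega_{i,\varphi_i(t)}^{m-1}$, both $d$-exact since $\sddb\dot\varphi_i$ is a constant multiple of $d(d^c\dot\varphi_i)$ with $d^c\dot\varphi_i$ a global $1$-form; $\tfrac{d}{dt}\Ric(\omega_{0,\varphi_0(t)},J_0)=-\sddb(\Delta_{\omega_{0,\varphi_0(t)}}\dot\varphi_0)$, again exact; and $\tfrac{d}{dt}h_i(t)$, the function determined by $\dot\varphi_i$ through $\iota_{\xi_i}\sddb\dot\varphi_i=d\dot h_i(t)$, which is solvable precisely because $\dbar_{J_i}\xi_i=0$ makes the left-hand side closed and, after the usual normalization, exact. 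Substituting these, each $\tfrac{d}{dt}(\text{component of }\mu_{\cJ,\vec p})$ becomes $d\beta_i$ for a $(2n-1)$-form $\beta_i$ on $X_i$; integrating $\int_{X_i}h_i\,d\beta_i=-\int_{X_i}dh_i\wedge\beta_i=-\int_{X_i}\iota_{\xi_i}\omega_{i,\varphi_i}\wedge\beta_i$ by Stokes, and adding the $\dot h_i$-contributions $\int_{X_i}\dot h_i\,(\text{component})$, one is left on each $X_i$ with a total derivative after using $\iota_{\xi_i}\omega_{i,\varphi_i}=dh_i$ once more together with $\dbar_{J_i}\xi_i=0$ to rewrite the relevant Lie derivatives $\mathcal L_{\xi_i}$ as $d$-exact. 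Hence $\tfrac{d}{dt}F_{\cJ,\vec p}(\xi)=0$. Conceptually this is the statement that, in the moment-map picture of Theorem~\ref{thm: Kahler moment map of ccscK general}, the pairing $\langle\mu_{\cJ,\vec p},\xi\rangle$ is locally constant along $\cO_{J,\vec f}$ for $\xi$ in the Lie algebra of the stabilizer, equivalently that the Mabuchi-type functional of Definition~\ref{def: Calabi functional and Mabuchi functional} is affine with slope $F_{\cJ,\vec p}(\xi)$ along the ray generated by $\hat{J}\xi$; this is \cite{Fut83}, \cite{Wang04} Proposition~6, and \cite{Don01} adapted to the present integrand.

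The routine parts — the three variational formulas, the bookkeeping of Stokes' theorem, the matching of $\dot h_i$-terms against the $dh_i\wedge\beta_i$-terms — I would not write out in full. The one point that genuinely uses that $\mu_{\cJ,\vec p}$ \emph{is} a moment map, and which I expect to be the main obstacle, is the cancellation among the cross-terms in which the \emph{same} factor $\omega_{0,\varphi_0}$ is varied: $\omega_{0,\varphi_0}$ appears both in the first component of $\mu_{\cJ,\vec p}$ (paired with $h_0$ on $X_0$) and, via $\tfrac{\omega_{0,\varphi_0}^{n-p_i}}{(n-p_i)!}\wedge\tfrac{f_i^*\omega_{i,\varphi_i}^{p_i}}{p_i!}$, in the $i$-th component (paired with $h_i$ on $X_i$); after Stokes these pieces must cancel against one another and against the $\Ric$-variation. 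Tracking this cancellation is exactly where the $\iota$-pullback construction of Lemma~\ref{lem: moment map of ccscK 1} and the pushforward identities $f_{i\,*}=(f_i^{-1})^*$ are used, i.e. where one invokes that the components of $\mu_{\cJ,\vec p}$ fit together equivariantly. Once that cancellation is verified the independence follows, and the obstruction statement is immediate from the first paragraph.
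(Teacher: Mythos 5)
Your proposal is correct and matches the paper's treatment: the paper gives no written proof of this corollary, deferring entirely to the standard moment-map result (\cite{Wang04}, Proposition 6; \cite{Fut83}), which is precisely the reduction your first and last paragraphs carry out, with the vanishing-at-a-solution step and the contrapositive exactly as in your opening paragraph. Your middle paragraph additionally sketches the concrete variational verification that the cited abstract result packages; that is more detail than the paper supplies and is consistent with it.
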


Besides, we can define the Calabi functional, which is  $||\mu||^2$ .

By \cite{Wang04}, corollary 13, we have the following: 
\begin{coro}\label{cor: extremal metric and ccscK}
We define the extremal metric corresponding to $\mu_{\cJ,\vec{p}}$ to be the critical point of $\cC_{\cJ,\vec{p}}$. Then the extremal metric solves $\mu_{\cJ,\vec{p}}=0$ (in the domain $\cO_{J, id}$) iff the Futaki invariaant are zero for all holomorphic vector field. 
\end{coro}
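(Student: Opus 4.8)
The plan is to deduce the statement from the general moment--map result \cite{Wang04}, Corollary 13, by transcribing its hypotheses and conclusion into the present setting. Throughout I write $K=\prod_{i=0}^k\ham(X_i,\omega_i)$, let $z$ denote a point of the complexified orbit $\cO_{J,id}\cong K^{\CC}/K\cdot(J,id)$, and use the $L^2$--type pairing that defines $\|\cdot\|$ to identify $Lie(K)^*\cong Lie(K)$, so that $\cC_{\cJ,\vec p}(z)=\langle\mu_{\cJ,\vec p}(z),\mu_{\cJ,\vec p}(z)\rangle=\|\mu_{\cJ,\vec p}(z)\|^2$.

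First I would record the Euler--Lagrange equation for $\cC_{\cJ,\vec p}$ along $\cO_{J,id}$. Using the moment--map identity $\langle d\mu_z(v),\xi\rangle=\omega_z(X_\xi,v)$ together with the fact that the variations along the complexified orbit are the ``imaginary'' directions $JX_\xi$ (exactly as in the Mabuchi computation already displayed before this corollary), a one--line computation gives
\[
\frac{d}{dt}\Big|_{t=0}\cC_{\cJ,\vec p}\big(e^{-\sqrt{-1}t\xi}\cdot z\big)=\pm\,2\,g_z\big(X_{\mu_{\cJ,\vec p}(z)},X_\xi\big),
\]
so $z$ is a critical point of $\cC_{\cJ,\vec p}$ precisely when $X_{\mu_{\cJ,\vec p}(z)}=0$, i.e. when $\mu_{\cJ,\vec p}(z)$, viewed in $Lie(K)$, lies in the Lie algebra of the stabiliser of $z$. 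By the lemma identifying the stabiliser of $(J,id)$ with $G_0^{id}=\bigcap_{i=0}^k Aut(X_i,L_i)$ (embedded via $\varphi\mapsto(\varphi,\dots,\varphi)$), this is exactly the condition that $\mu_{\cJ,\vec p}(z)$ be the holomorphic \emph{extremal vector field} on each $X_i$; so the critical points of $\cC_{\cJ,\vec p}$ are precisely the generalised extremal metrics.

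Then comes the key identity: for such a critical point $z_0$ the element $\mu_{\cJ,\vec p}(z_0)$ is a holomorphic vector field in the sense of Definition \ref{def: Futaki invariant}, and directly from the definition of $F_{\cJ,\vec p}$ one has $F_{\cJ,\vec p}\big(\mu_{\cJ,\vec p}(z_0)\big)=\big\langle\mu_{\cJ,\vec p}(z_0),\mu_{\cJ,\vec p}(z_0)\big\rangle=\cC_{\cJ,\vec p}(z_0)$. Hence if the Futaki invariant vanishes for every holomorphic vector field --- in particular for $\mu_{\cJ,\vec p}(z_0)$ --- then $\cC_{\cJ,\vec p}(z_0)=0$, so $\mu_{\cJ,\vec p}(z_0)=0$ and the extremal metric solves the equation. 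Conversely, if the extremal metric satisfies $\mu_{\cJ,\vec p}(z_0)=0$ then the generalised ccscK system has a solution, so by Corollary \ref{cor: solution exists implies Futaki invariant vanish} the Futaki invariant vanishes on all holomorphic vector fields. This is \cite{Wang04}, Corollary 13, applied verbatim to $\mu_{\cJ,\vec p}$.

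I expect the only genuinely delicate point to be verifying that the hypotheses of \cite{Wang04}, Corollary 13 really apply here: besides the geodesic convexity of $\cM_{\cJ,\vec p}$ already recorded in Corollary \ref{cor: unique solution}, one must check that $\mu_{\cJ,\vec p}(z_0)$ --- a priori only an element of the stabiliser Lie algebra of $(J,id)$ inside $Lie(K)$ --- genuinely assembles into a tuple of holomorphic vector fields of the form $(\xi_0,(f_1)_*\xi_0,\dots,(f_k)_*\xi_0)$ on which $F_{\cJ,\vec p}$ is defined, and that $F_{\cJ,\vec p}$ is independent of the chosen orbit representative, so that the identity $F_{\cJ,\vec p}(\mu_{\cJ,\vec p}(z_0))=\cC_{\cJ,\vec p}(z_0)$ is meaningful. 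Both follow from the lemma identifying the stabiliser with $G_0^{\vec f}$ and from the invariance statement in Corollary \ref{cor: solution exists implies Futaki invariant vanish}.
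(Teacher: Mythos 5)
Your proposal is correct and follows exactly the route the paper intends: the paper offers no proof of this corollary, merely citing \cite{Wang04}, Corollary 13, and your argument is the standard unwinding of that citation (critical points of $\|\mu\|^2$ on the complexified orbit force $\mu(z_0)$ into the stabiliser Lie algebra, then $F_{\cJ,\vec p}(\mu(z_0))=\|\mu(z_0)\|^2$ closes the loop, with the converse given by Corollary \ref{cor: solution exists implies Futaki invariant vanish}). The only caveat, which affects the paper equally, is that the identification $Lie(K)^*\cong Lie(K)$ and the metric $g_z$ depend on the point $z$, so the finite-dimensional result is being applied formally in the infinite-dimensional setting — but that is the level of rigour at which the whole section operates.
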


 Let $K$ be a Lie group, and $K^{\CC}$ be the complexify orbit. Suppose $K$ acts on a space $X$ with a hamilitonian group action,  and $\mu: X\rightarrow Lie(K)^*$, we can define a $K$ invariant one form on $(K^{\CC}/K)$, defined by the following: for any $v\in Lie(K)$, 
\[\alpha\left(\dfrac{d}{dt}e^{-\sqrt{-1}tv}\cdot g\right):=\langle \mu(g\cdot z), v\rangle.\] 
 It is well-defined and independent of the choice of $g' \in K\cdot g$ as
\[\langle\mu(kg\cdot z),\Ad_k\xi \rangle=\langle\Ad_k\mu(g
\cdot z), Ad_k\xi\rangle=\langle \mu(g\cdot z),\xi\rangle.\]
\begin{lemm}
$\alpha$ is closed. Therefore, it is an exact form, and hence,  there is a functional $\cM:K^{\CC}/K \rightarrow \RR$ defined by 
\[\cM(g):=\int_{0}^1\alpha(g_t)dt,\] where $g_t$ is any curve connecting a fix point $g_0$ and $g$.
\end{lemm}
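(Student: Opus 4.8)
The plan is to establish the two assertions in order: first that the one-form $\alpha$ on $K^{\CC}/K$ is closed, and second that, being closed on a contractible space, it is exact and therefore has a primitive $\cM$ given by integration along a path. I would begin by setting up coordinates suited to the computation: identify $T_g(K^{\CC}/K)$ at a point $g$ with $\sqrt{-1}\,Lie(K)$ via the infinitesimal generators $v\mapsto \tfrac{d}{dt}\big|_{t=0} e^{-\sqrt{-1}tv}\cdot g$, exactly as in the definition of $\alpha$. The key input is the standard moment-map identity: for the $K$-action on $(X,\Omega)$ with moment map $\mu$, and for $v,w\in Lie(K)$ with induced vector fields $X_v, X_w$ on $X$, one has $d\langle\mu,v\rangle = \iota_{X_v}\Omega$, and consequently $\langle\mu,[v,w]\rangle = \Omega(X_v,X_w)$ by equivariance of $\mu$. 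Differentiating $\alpha$ along the generators $v,w\in \sqrt{-1}\,Lie(K)$ and using $d\alpha(V,W) = V(\alpha(W)) - W(\alpha(V)) - \alpha([V,W])$, the three terms reorganize — using the moment-map identity for the first two and the bracket relation $[X_v,X_w] = -X_{[v,w]}$ for the third — into an expression that vanishes because $\Omega$ is a symplectic (in fact Kähler) form restricted to the complex orbit $\cO_{J,\vec f}$, where $\Omega = \Omega_{J,\vec p}$ is $\hat J$-invariant by Theorem \ref{thm: Kahler moment map of ccscK general}. The cancellation is the same one that appears in the classical Mabuchi-functional computations of \cite{Don01}, \cite{Wang04}; I would cite those rather than reproduce every line.

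Once $d\alpha = 0$ is in hand, exactness is immediate: $K^{\CC}/K$ is diffeomorphic to a vector space (it is $\sqrt{-1}\,Lie(K)$ via the exponential map, or more concretely it retracts to the base point), so $H^1_{dR}(K^{\CC}/K) = 0$ and $\alpha = d\cM$ for some smooth $\cM$. Concretely I would define $\cM(g) := \int_0^1 \alpha(\dot g_t)\,dt$ along any path $g_t$ from a fixed $g_0$ to $g$; well-definedness (path-independence) is exactly the statement $d\alpha = 0$ together with simple connectivity, via Stokes on a homotopy between two paths. The $K$-invariance of $\cM$ follows from the $K$-invariance of $\alpha$ established in the excerpt (the computation $\langle\mu(kg\cdot z),\Ad_k\xi\rangle = \langle\mu(g\cdot z),\xi\rangle$), since one can transport the path by the $K$-action without changing the integrand.

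I expect the main obstacle to be purely bookkeeping rather than conceptual: one must be careful that the generators $\tfrac{d}{dt} e^{-\sqrt{-1}tv}\cdot g$ do not commute as vector fields on $K^{\CC}/K$ — their bracket produces exactly the $[v,w]$ term — and that the factors of $\sqrt{-1}$ are tracked correctly when passing between $Lie(K)$, $\sqrt{-1}\,Lie(K)$, and $Lie(K^{\CC})$. A second point requiring a line of justification is that $\alpha$ is genuinely a smooth one-form on the orbit space $\cO_{J,\vec f}$ (identified with $\prod_i \ham^{\CC}_{J_i}(X_i,\omega_i)$ modulo $\prod_i \ham(X_i,\omega_i)$, which is only formally a quotient of infinite-dimensional groups); here I would work formally, as is standard in this subject, treating $\cM$ as a functional on the space of Kähler potentials $(\varphi_0,\dots,\varphi_k)$ and deferring any analytic subtleties, exactly as \cite{Wang04} and \cite{DaPi19} do. The geodesic convexity of $\cM$ and the identification of its critical points with solutions of $\mu_{\cJ,\vec p}=0$ would then be the content of the subsequent definition and corollaries, not of this lemma.
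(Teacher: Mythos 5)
Your proposal is correct and follows essentially the same route as the paper: the paper also computes $d\alpha(\xi,\eta)$ on the generators $\left.\frac{d}{dt}\right|_{t=0}e^{-\sqrt{-1}t\xi}\cdot g$ (taking $[\xi,\eta]=0$ to suppress the bracket term you carry along), applies the moment-map identity to get $\omega(JX_{\xi},X_{\eta})-\omega(JX_{\eta},X_{\xi})$, and concludes by the symmetry of $g(\cdot,\cdot)=\omega(\cdot,J\cdot)$; exactness then follows from contractibility of $K^{\CC}/K$ exactly as you say. The only point worth sharpening is that the final cancellation is not a consequence of $\Omega$ being symplectic alone but precisely of its $J$-compatibility (symmetry of $\Omega(\cdot,J\cdot)$), which you correctly flag parenthetically via the K\"ahler condition.
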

\begin{proof}
	Assume $[\xi,\eta]=0$, then by identifying $Lie(K)$ and $Lie(K)^*$, 
	\begin{align*}
	d\alpha(\xi,\eta)=&\left\langle \left.\dfrac{d}{dt}\right|_{t=0}\mu\left(e^{-\sqrt{-1}t\xi}\cdot z\right), \eta\right\rangle-\left.\dfrac{d}{dt}\right|_{t=0}\left\langle \mu\left(e^{\sqrt{-1}t\eta}\cdot z\right), \xi\right\rangle\\
	=& \langle d\mu (JX_{\xi}),\eta\rangle- \langle d\mu (JX_{\eta}),\xi\rangle\\
	=& \omega(JX_{\xi},X_{\eta})-\omega(JX_{\eta},X_{\xi})\\
	=&0,
	\end{align*}
	where $X_{\xi}$ is the vector field $ \left.\dfrac{d}{dt}\right|_{t=0}e^{-\sqrt{-1}t\xi}\cdot z$. Therefore it is closed.
\end{proof}

As a result, given a moment map, we can define the Mabuchi functional by  
\[\cM_{\mu}(g):=\int_0^1\alpha(\dot{g_t})dt,\] where $g_0=id$. 

In our case, we can define $K^{\CC}$ as a complex manifold (the orbit space). Notice that \[K^{\CC}/K \cdot {id}\cong \prod_{i=0}^kPSH(X_i,\omega_i),\] so we can define the Calabi functional and Mabuchi functional by the following. 
\begin{defi}
	Let $(X_i,\omega_i)$ be K{\"a}hler manifold, then we denote 
	\[H_j^{i,p}(h_i,h_j):=\dfrac{n!}{(n-p)!p!}\dfrac{\omega_{i,h_i}^{n-p}\wedge\omega_{j,h_j}^p}{\omega_{j,h_j}^n},\] and the mean is defined by
\[\underline{H_j^{i,p}}(h_i,h_j):=\int_X H_j^{i,p}(h_i,h_j)\dfrac{\omega_j^{n}}{n!}.\]
	\end{defi}
As
\[Lie(\ham(X_i,\omega_i))\cong\{\varphi_i\in C^{\infty}(X)| d\varphi=\iota_{X_{\varphi}}\omega_i\}/\RR,\] 
and 
\[\int_{X_j}\varphi_j \dfrac{\omega_j^{n-p}\wedge f^*\omega_i^{p}}{(n-p)!(p)!} =\int_{\sigma_j(X_j)}\varphi_j(\sigma_j(x))\left.\dfrac{\omega_{j,h_j}^{n-p}\wedge \omega_{i,\sigma_i}^{p}}{(n-p)!!}\right|_{\sigma_j(x)}=\int_{X_j}\varphi_i H_0^{i,p_i+1}(h_0,h_i)\dfrac{\omega_{0,h_0}^n}{n!},\] the explicit formula of Calabi and Mabuchi functional is given by the following:
\begin{defi}\label{def: Calabi functional and Mabuchi functional}
	The Calabi functional $\displaystyle\cC_{\cJ,\vec{p}}:\prod_{i=0}^k\psh(X_i,\omega_i)\rightarrow \RR$ is defined by the formula
	\begin{align*}
	\cC_{\cJ,\vec{p}}(\vec{h})=&|| \mu_{\cJ,\vec{p}}(\vec{\sigma_h})||^2\\\
	=&\int_{X_0}\left|\sum_{i=1}^kH_0^{i,p_i+1}(h_0,h_i)-S_{h_0}-\sum_{i=1}^k\underline{H_0^{i,p_i+1}(h_0,h_i)}+\underline{S_{h_0}}\right|^2\dfrac{\omega_{0,h_0}^n}{n!}\\
	&+\sum_{j=1}^k\int_{X_j} |H_{j}^{0,n-p_i}(h_0,h_i)-\underline{H_{j}^{0,n-p_i}}(h_0,h_i)|^2\dfrac{\omega_{j,h_j}^n}{n!}.
	\end{align*}
	
The Mabuchi functional corresponding to $\mu_{\cJ,\vec{p}}$ is given by
\[\cM_{\cJ,\vec{p}}: \prod_{i=0}^k\psh(X_i,\omega_i)\rightarrow \RR\] such that the variational formula is 
\begin{align*}
d\cM_{\cJ,\vec{p}}|_{h_0,...,h_k}(\vec{\varphi})
:=&\langle\mu_{\cJ,\vec{p}}((\sigma_0,\cdots,\sigma_k)\cdot (J,\vec{f})), \vec{\xi_{\varphi}}\rangle\\
=&\int_{X_0}\varphi_0\left(\sum_{i=1}^kH_0^{i,p_i+1}(h_0,h_i)-S_{h_0}+\underline{S}-\sum_{i=1}^k\underline{H_0^{i,p_i+1}(h_0,h_i)}\right)\dfrac{\omega_{0,h_0}^n}{n!}\\
&+\sum_{j=1}^k\int_{X_j} \varphi_j\left(H_{j}^{0,n-p_i}(h_0,h_i)-\underline{H_{j}^{0,n-p_i}}(h_0,h_i)\right)\dfrac{\omega_{j,h_j}^n}{n!},
\end{align*}
where $\varphi\in C^{\infty}(X,\RR)$.

\end{defi}
Following the standard result of moment map on the comlex orbit (for example, see \cite{Don02},\cite{Wang04}), as the geodesic is given by $e^{-\sqrt{-1}t\xi}\cdot g$, 
\[\cM'(t)=\langle \mu(e^{-\sqrt{-1}t\xi}\cdot z), \xi\rangle,\]
\[\cM''(t)=\omega(-J\xi,\xi)=||\xi||^2>0.\] 

We have the following corollary.
\begin{coro}\label{cor: unique solution}
$\cM_{\cJ,\vec{p}}$ is convex along smooth geodesics. Hence the solution of the generalized ccscK is the minimum of  $\cM_{\cJ,\vec{p}}$.
\end{coro}

Notice that by \cite{Dar14}, not any two K\"{a}hler potential can be connected by the smooth geodesic in general, not even the limit of a sequence of smooth geodesic. Therefore, in general, the convexity of Mabuchi functional for smooth geodesic cannot imply the critical point is unique. However, under some special case, we will still have uniqueness result directly.

Let $(X,L_0,\cdots, L_k)$ to be a polarized toric manifold and the curvature of the toric equivariant line bundle $L_i$ is $\omega_i$ which are positive. Let $P_i$ be the moment polytopes corresponding to $L_i$. We also denote$P_i$ is defined by the equations 
\[\cap_{\alpha} \{l_i^{\alpha}(x)\geq 0\},\] where $l_i^{\alpha}(x)$ are affine functions.  Recall that (See \cite{Gua99} \cite{Don02}, \cite{Gue14}), the space of the $(S^1)^n$ invariant K\"{a}hler form with the K\"{a}hler class $[\omega_i]$ 
\[\{\varphi\in C^{\infty}(X,\RR)|\omega_i+\ddbar\varphi>0, \varphi(\theta\cdot x)=\varphi(x), \theta\in (S^1)^n\} \] is isometric to the space 
\[\cH_i:=\{u\in C^{\infty}(P_i^0)| u\text{ is convex},u_i=\sum_{\alpha} (l_i^{\alpha}(x)\log(l_i^{\alpha}(x))), u-u_i\in C^{\infty}(P_i)\},\] with the geodesic is given by $u+tv$, $v\in C^{\infty}(P_i, \RR)$. Therefore, the orbit space is isometric to the space
\[\cH_0\times \cdots \times \cH_k\] which is geodesicly convex. Therefore, as a direct consequence of \ref{cor: unique solution}, if we have two minimum point, we can connect them by a stricly convex geodesic, which lead a contradiction. Therefore,  we have

%
\begin{coro}\label{unique in toric} Let $(X,L_0,\cdots, L_k)$ to be a polarized toric manifold and the curvature of the toric equivariant line bundle $L_i$ is $\omega_i$. Then the $(S^1)^n$ invariant solution of the equation  \[\left\{\begin{matrix}\displaystyle
\sum_{i=1}^k\left(\dfrac{f_i^*\omega_{i,\varphi_i}^{p_i+1}}{(p_i+1)!}\wedge \dfrac{\omega_{0,\varphi_0}^{n-p_i-1}}{(n-p_i-1)!}\right)-\Ric(\omega_{0,\varphi_0}, J_0)\wedge\dfrac{\omega_{0,\varphi_0}^{n-1}}{(n-1)!}-c_0\dfrac{\omega_{0,\varphi_0}^n}{n!}&=&0\\
\dfrac{\omega_{0,\varphi_0}^{n-p_1}}{(n-p_1)!}\wedge \dfrac{f_1^*\omega_{1,\varphi_1}^{p_1}}{p_1!}-c_1\dfrac{f_1^*\omega_{1,\varphi_1}^n}{n!}&=&0\\
\vdots\\
\dfrac{\omega_{0,\varphi_0}^{n-p_k}}{(n-p_k)!}\wedge \dfrac{f_k^*\omega_{k,\varphi_k}^{p_k}}{p_k!}-c_k\dfrac{f_k^*\omega_{k,\varphi_k}^n}{n!}
&=&0
\end{matrix}\right.\] is unique (if exists).  
\end{coro}

\section{ K{\"a}hler construction for coupled equation p}\label{sec: Kahler coupled equation p}

In this section, we will try to find a suitable space for the coupled equation p which is a K{\"a}hler manifold. However, the case is much more subtle then the pervious case. The problem is, unlike $\cJ \times \map(X,Y;p)^+$, it is not easy to find a good complex submanifold inside $\map(X,Y;p)^+$ such that both $f^*\omega_Y$ and $\omega_X$ form of $X$ and $f_*\omega_X$ and $\omega_Y$ are Kahler forms of $Y$. Before we go on our main disscusion, notice that if we restrict the group to be either $\ham(X,\omega_X)$ or $\ham(Y,\omega_Y)$ we do have a good complex submanifold. For the first subgroup, the coupled moment map equation become \[\omega_X^{n-p-1}\wedge f^*\omega_Y^{p+1}=c_1\omega_X^n.\] The second subgroup gives 
\[f_*\omega_X^{n-p}\wedge \omega_Y^p=c_2\omega_Y^n.\] We have seen this trick when we re-construct the deformed HYM.

The method we suggest is the following: we consider $F_f:\map(X,X)\times \map(Y,Y)\rightarrow \map(X,Y)$ by 
\[F_f(\sigma,\eta)=\eta \circ f\circ \sigma^{-1},\] and consider the pull back image $F_f^{-1}(\map(X,Y;p)^+)$. Notice that $F_f^{-1}(\Omega_p)$ is not a symplectic form as it may be degenerated. Then we can find a "lagrest complex submanifold" $\cX_p^+$, and the orbit space $\ham^{\CC}(X,\omega_X)\times \ham^{\CC}(Y,\omega_Y)$  inside $F^{-1}(\map(X,Y;p)^+)$. And we will show that if $F_f(\cX_p^+)$ and $F_f(\ham^{\CC}(X,\omega_X)\times \ham^{\CC}(Y,\omega_Y))$ are complex manifold, then these are the spaces for the moment map picture for moment map $p$.

Let $J_X$ to be an integrable almost complex structure of $X$. Then for any diffeomorphism $g:X\rightarrow Y$, we can define an almost complex structure of $Y$ by 
\[J_Y:=J_X^g:=Dg J_XDg^{-1}.\] This is integrable as the complex local coordinate of $Y$ can be defined by $X$ and $g$, namely, if \[\{U_i,\varphi_i:U_i\rightarrow \Omega_i\subset \CC^n\}\] are complex local coordinate of $X$, then  $\{g(U_i),\psi_i:=\varphi_i\circ g^{-1}:g(U_i)\rightarrow \Omega_i\}$  with transition map 
\[\psi_j\circ \psi_i^{-1}|_{\psi_i(g^{-1}(U_i\cap U_j))}=\varphi_j\circ \varphi_i^{-1}|_{\varphi_i(U_i\cap U_j)}\] defines the complex local coordinate of $Y$.

However, let 
$(X,\omega_X,J_X), (Y,\omega_Y,J_Y)$ be two K{\"a}hler manifold. Notice that $J_X$ is compatible with $\omega_X$ doesn't implies $J_X^g$ is compatible with $\omega_Y$.

\begin{defi}
	Let $(X,\omega_X,J_X)$, $(Y,\omega_Y,J_Y)$ be compact K{\"a}hler manifolds. 
	Define \[\cJ(X,\omega_X):=\{J\in\cJ_{int}(X)| \omega_X(J\cdot, J\cdot)=\omega_X(\cdot,\cdot), \omega_X(J\cdot, \cdot)>0\}.\]
\end{defi}

Define $F_f:\map(X,X)\times\map(Y,Y)\rightarrow\map(X,Y)$ to be 
\[F_f(\varphi,\psi):=\psi\circ f\circ \varphi^{-1}.\]
We also denote $J_X^{\varphi}:=D\varphi \circ J_X D\varphi^{-1}$ for any $\varphi\in\map(X,X)$ (and similarly for $J_Y^{\psi}$.)  Notice that 
\[D\varphi^{-1}J_X^{\varphi}=J_X D\varphi^{-1}.\] Then we define the following:
\begin{defi}
	Let $(X,\omega_X,J_X)$, $(Y,\omega_Y,J_Y)$ be compact K{\"a}hler manifolds. Then we define 
	\[\Kmap_{\omega_Y}(X,Y;J_X):=\{f\in \diff(X,Y)| J_X^{f}=Df J_X Df^{-1}\in\cJ(Y,\omega_Y)\}.\]
	We denote $\Kmap(X,Y)=\Kmap_{\omega_Y}(X,Y;J_X)$ if there is no confusion on the K{\"a}hler form.  
\end{defi}
As a remark, we can also define $\Kmap(X,Y)$ by fixing $J_Y$ and moving $J_X$.
\begin{lemm}\label{lem: complex structure}
	The manifold $(\map(X,Y), J)$, where $J\delta f:=Df J_X Df^{-1}\delta f=J_X^f $, is a complex manifold.
\end{lemm}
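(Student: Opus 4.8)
By definition $J$ is the endomorphism of $T\map(X,Y)$ which at a point $f$ sends $\delta f\in T_f\map(X,Y)=\Gamma(f^*TY)$ to $J_X^f\circ\delta f$, where $J_X^f=Df\,J_X\,Df^{-1}$. Since $(J_X^f)^2=Df\,J_X^2\,Df^{-1}=-\mathrm{id}$ we get $J^2=-\mathrm{id}$, and $J$ depends smoothly on $f$ once one fixes the usual Fr\'echet model of a neighbourhood of $f$ in $\map(X,Y)$ via the exponential chart of an auxiliary Riemannian metric on $Y$ (so that $T_f\map(X,Y)$ is identified with $\Gamma(f^*TY)$ and Lie brackets of vector fields are computed there). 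So $J$ is an almost complex structure, and the whole content is integrability, i.e. the vanishing of
\[
N_J(v,w)=[v,w]+J[Jv,w]+J[v,Jw]-[Jv,Jw]
\]
for vector fields $v,w$ on $\map(X,Y)$.

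The plan is to mimic, in this simpler setting, the Nijenhuis computation carried out in the proof of Lemma~\ref{lem: Y is complex}; indeed the present $J$ is precisely the ``$\map$-factor'' of the almost complex structure $\hat J$ treated there. The one identity that does the work is the directional derivative of $f\mapsto J_X^f$: for $v\in T_f\map(X,Y)$, writing $B_v:=Dv\,Df^{-1}$, one has
\[
\delta_v\big(J_X^f\big)=\delta_v\big(Df\,J_X\,Df^{-1}\big)=\big(Dv\,Df^{-1}\big)J_X^f-J_X^f\big(Dv\,Df^{-1}\big)=[B_v,\,J_X^f],
\]
exactly the formula appearing in the computation of $DF$ in Lemma~\ref{lem: Y is complex}. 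Since in the chart the coordinate vector fields commute, the terms of $N_J(v,w)$ split into (a) the terms measuring non-integrability of the fixed complex structure $J_X$ on $X$ and of $J_X^f$ on $Y$, which vanish because $J_X$ is integrable and, for every $f$, $(Y,J_X^f)$ is a complex manifold (its charts are $\varphi_i\circ f^{-1}$, as recalled just above the statement), and (b) the ``cross terms'' produced by the $f$-dependence of $J$ through $J_X^f$. Substituting $\delta_v(J_X^f)=[B_v,J_X^f]$ into the cross terms and expanding, everything collapses after cancellations of the type $J_X^f\big(Df\,A\,Df^{-1}\big)w=Df\,(J_XA)\,Df^{-1}w$, precisely as in the final display of the proof of Lemma~\ref{lem: Y is complex}; hence $N_J\equiv 0$. (Conceptually, the same fact is visible from the map $f\mapsto J_X^f$ into $\cJ_{int}(Y)$ being holomorphic, combined with $\cJ(Y,\omega_Y)\subset\cJ_{int}(Y)$ being complex, cf. Lemma~\ref{lem: J is complex}; but for $\map(X,Y)$ the direct computation is the cleanest route since that map is not injective.)

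The main obstacle is entirely organizational rather than conceptual: one must (i) pin down the Fr\'echet-manifold model of $\map(X,Y)$ so that $[v,w]$ is meaningful, (ii) express $J$ and differentiate it in that model to obtain the formula $\delta_v(J_X^f)=[B_v,J_X^f]$, and (iii) keep track of the rather large number of terms in $N_J(v,w)$ so that the cancellations really are the ones above. No new idea beyond the proof of Lemma~\ref{lem: Y is complex} is required.
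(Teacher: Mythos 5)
Your proposal takes a genuinely different route from the paper, and as written it does not close. The paper's argument is a short transport argument: $\map(Y,X)$ carries the standard integrable complex structure $\delta g\mapsto J_X\circ\delta g$ (the mapping space into the fixed complex manifold $(X,J_X)$), the inversion $inv:\map(X,Y)\to\map(Y,X)$, $f\mapsto f^{-1}$, is a diffeomorphism whose differential $v\mapsto -Df^{-1}\circ v\circ f^{-1}$ conjugates $J_X^f$ into $J_X$; hence $J$ is the pullback of an integrable complex structure under a diffeomorphism and is integrable, with no Nijenhuis computation at all.

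The gap in your direct computation is that the cancellation mechanism you import from the proof that $\hat J$ is integrable (the unlabelled lemma preceding Lemma~\ref{lem: Y is complex}) is the wrong one for this statement. In that proof the terms actually computed are the cross terms between the $\cJ$-factor and the $\map$-factor, while the pure $\map\times\map$ terms are dismissed by invoking ``$J^f$ is integrable'' --- a finite-dimensional statement about $Y$ which is, in effect, what the present lemma is supposed to upgrade to the mapping space. Those pure terms are exactly what you must control here, and the identity $\delta_v(J_X^f)=[B_v,J_X^f]$ is not enough: the bracket $[Jv,Jw]$ forces you to differentiate $J_X^f$ in the direction $J_X^fv$, and $B_{J_X^fv}=D(J_X^fv)\,Df^{-1}$ is \emph{not} $J_X^fB_v$ --- it picks up extra terms from the base-point dependence of $Df\,J_X\,Df^{-1}$ (second derivatives of $f$ and first derivatives of $J_X$), and it is precisely in cancelling those that the integrability of $J_X$ must enter. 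So the heart of the argument is asserted rather than carried out. The computation can surely be completed, but the inversion trick gives the result for free, and I would recommend adopting it.
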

\begin{proof}
	It is obvious that $(\map(Y,X),J_X)$ is a complex manifold as $J_X$ is integrable. Notice that the map $inv:\map(X,Y)\rightarrow \map(Y,X)$ defined by 
	\[inv(f)=f^{-1}\] is a diffeomorphism, hence we can consider $J$ on $\Kmap(X,Y)$ by \[inv_*J_X=D\, inv^{-1} J_X D\, inv.\] For any $v\in T_f\Kmap(X,Y)$, $D(inv(v))=Df^{-1} v \circ f^{-1} \in T_{f^{-1}}\Kmap(Y,X)$. Therefore, for any $w\in T_f^{-1}\Kmap(Y,X)$,
	\[inv_*J(v) = Df(J_XDf^{-1} v \circ f^{-1})\circ f=Df J_X Df^{-1}v. \] Therfore, $(\Kmap(X,Y),J)$ is a complex manifold with integrable almost complex structure $J$. 
\end{proof}
\begin{lemm}\label{lem: complex structure of Kmap}
	$\Kmap(X,Y)$ and $\Kmap(X,Y;p)^+$ is a complex submanifold. 
\end{lemm}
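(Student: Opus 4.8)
The plan is to reproduce, for $\Kmap(X,Y)$, the strategy used in the proof of Lemma~\ref{lem: Y is complex}: exhibit it as the preimage of a complex submanifold under a holomorphic map. Concretely, define
\[
\Phi:\map(X,Y)\longrightarrow \cJ_{int}(Y),\qquad \Phi(f):=J_X^{f}=Df\,J_X\,Df^{-1}.
\]
By the discussion preceding Lemma~\ref{lem: complex structure}, $J_X^{f}$ is an integrable almost complex structure on $Y$, so $\Phi$ is well defined, and (the one–factor, $A$–free version of the formula $DF(A,\vec v)=A+\sum[Dv_iDf_i^{-1},J^{f_i}]$ from Lemma~\ref{lem: Y is complex}) its differential is
\[
D\Phi_f(v)=[\,Dv\,Df^{-1},\,J_X^{f}\,]=Dv\,J_X\,Df^{-1}-J_X^{f}\,Dv\,Df^{-1},
\]
which is smooth in $f$. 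By definition $\Kmap(X,Y)=\Kmap_{\omega_Y}(X,Y;J_X)=\Phi^{-1}\bigl(\cJ(Y,\omega_Y)\bigr)$, and similarly $\Kmap(X,Y;p)^{+}=\Phi^{-1}\bigl(\cJ(Y,\omega_Y)\bigr)\cap\map(X,Y;p)^{+}$.

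The first main step is to check that $\Phi$ is holomorphic with respect to the complex structure $J$ on $\map(X,Y)$ from Lemma~\ref{lem: complex structure} (for which $Jv=J_X^{f}v$) and the standard complex structure $\widetilde J$ on $\cJ_{int}(Y)$ (for which $\widetilde J_{J'}A=J'A$): that is, $D\Phi_f(J_X^{f}v)=J_X^{f}\,D\Phi_f(v)$ for every $v\in T_f\map(X,Y)$. This is the single–factor instance of the identity $\widetilde J\,DF=DF\,\hat J$ already verified in the proof of Lemma~\ref{lem: Y is complex}, and the computation is the same — expand both sides using $D\Phi_f(v)=[Dv\,Df^{-1},J_X^{f}]$, keeping track of the $f$–dependence of $J_X^{f}$, and use $(J_X^{f})^{2}=-\id$. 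Granting this, and since $\cJ(Y,\omega_Y)$ is a complex submanifold of $\cJ_{int}(Y)$ by Lemma~\ref{lem: J is complex}, the preimage $\Kmap(X,Y)=\Phi^{-1}\bigl(\cJ(Y,\omega_Y)\bigr)$ is a complex submanifold of $(\map(X,Y),J)$: at each $f\in\Kmap(X,Y)$ its tangent space is $\{\,v:D\Phi_f(v)\in T_{J_X^{f}}\cJ(Y,\omega_Y)\,\}$, which is $J$–complex because $D\Phi_f$ is complex linear and, as shown inside Lemma~\ref{lem: J is complex}, the linear equation cutting out $T\cJ(Y,\omega_Y)$ is itself complex linear; smoothness of the preimage is obtained exactly as in that lemma, i.e.\ from linearity of the defining condition together with the Frobenius argument there, now precomposed with $\Phi$, rather than from a bare transversality claim.

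Finally, $\Kmap(X,Y;p)^{+}=\Kmap(X,Y)\cap\map(X,Y;p)^{+}$, and $\map(X,Y;p)^{+}=\{f:\omega_X^{n-p}\wedge f^{*}\omega_Y^{p}>0\}$ is an open subset of $\map(X,Y)$ since positivity of a (top–degree) form is an open condition; hence $\Kmap(X,Y;p)^{+}$ is an open subset of the complex submanifold $\Kmap(X,Y)$ and therefore itself a complex submanifold. I expect the main obstacle to be the holomorphicity verification for $\Phi$ — specifically, correctly differentiating through the $f$–dependence of $J_X^{f}$ in $D\Phi_f$ — together with the observation that $\Phi$ is in general far from being a submersion (it is, e.g., constant along the right $\aut(X,J_X)$–orbits), so that the preimage must be recognized as a genuine submanifold via the linearity/Frobenius mechanism of Lemma~\ref{lem: J is complex}; once that is settled, the $p$–positive statement is immediate.
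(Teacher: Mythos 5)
Your overall strategy is the one the paper uses, just repackaged: the paper realizes $\Kmap(X,Y)$ as the image, under the injective holomorphic projection, of the graph $\cW=\{(f,J')\in\map(X,Y)\times\cJ(Y,\omega_Y)\mid Df\,J_X\,Df^{-1}=J'\}$, while you work directly with the map $\Phi(f)=J_X^f$ and take $\Phi^{-1}(\cJ(Y,\omega_Y))$. The holomorphicity verification you defer to Lemma \ref{lem: Y is complex} is exactly the linearized-equation check the paper carries out explicitly here (showing that $(J_X^f\sigma,J'A)$ satisfies the linearization of $f_*J'-J_X=0$), your formula $D\Phi_f(v)=[Dv\,Df^{-1},J_X^f]$ is correct, and the openness argument for $\Kmap(X,Y;p)^+$ matches the paper's.

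The one step where your substitute argument does not hold up is smoothness. You correctly observe that $\Phi$ is far from a submersion, so $\Phi^{-1}(\cJ(Y,\omega_Y))$ is not a submanifold by general nonsense; but the proposed repair --- ``the Frobenius argument of Lemma \ref{lem: J is complex}, precomposed with $\Phi$'' --- is not an argument. Frobenius in Lemma \ref{lem: J is complex} applies to the distribution $A\mapsto\{\omega(Av,Jw)+\omega(Jv,Aw)=0\}$ on $\cJ_{int}$, which is cut out by equations of fixed codimension and shown to be involutive there. Pulling that family of linear conditions back through the non-surjective maps $D\Phi_f$ gives the subspaces $\{v: D\Phi_f(v)\in T_{J_X^f}\cJ(Y,\omega_Y)\}$, which a priori need not have constant rank in $f$ nor form an involutive distribution, so neither Frobenius nor a rank theorem applies as stated. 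The paper avoids this by working on the graph: it defines $F(f,J')=f_*J'-J_X$ on $\map(X,Y)\times\cJ(Y,\omega_Y)$, notes that the partial derivative in the $J'$-direction, $A\mapsto Df^{-1}ADf$, is a bounded isomorphism (with two-sided norm bounds), concludes via the implicit function theorem that $\cW$ is locally a graph and hence smooth, and only then projects down. If you want to keep your formulation in terms of $\Phi$ alone, you need to supply an actual mechanism for smoothness of the fiber $\Phi^{-1}(\cJ(Y,\omega_Y))$ --- e.g.\ reintroduce the auxiliary $\cJ(Y,\omega_Y)$ factor as the paper does --- rather than appeal to Lemma \ref{lem: J is complex}.
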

\begin{proof}
	Consider $\map(X,Y)\times \cJ(Y,\omega_Y)$ with the product complex structure $J(\sigma, A)=(J_X^f\sigma, J_YA)$ for all $(\sigma,A)\in T_{f,J_Y}\map(X,Y)\times \cJ(Y,\omega_Y)$. Then we have a subvariety \[\cW:=\{(f, J_Y)| Df(J_X^0)Df^{-1}=J_Y\},\] here $J_X^0$ is fixed. We can rewrite the relation as $f_*J_Y-J_X=0$ as an endmorphism. By this, we can consider the map $F:\map(X,Y)\times \cJ(Y,\omega_Y)\rightarrow End(TX)$ by 
	\[F(f, J_Y)=f_*J_Y-J_X.\] Then $\cW=\{F(f, J_Y)=0\}.$ When we differenate with respect to $J_Y$ direction, say $A$, then 
	\[\delta_A F(f, J_Y)=\left.\dfrac{d}{dt}\right|_{t=0}(Df^{-1}J_Y^tDf)=Df^{-1}ADf\] which is bounded and  indeed $c|A|<|\delta_A F(f, J_Y)|<C|A| $ for some $c,C$, and for any norm. So $\cW$ is locally a graph, which gives the smooth structure of $\cW$.
	
	We now show $\cW$ is a complex subvariety. Let $\dfrac{d}{dt}f_t=\sigma$ and $\dfrac{d}{dt}J_Y^t=A$ at $t=0$. Then the condition on tangent space is given by
	\[-Df^{-1} D\sigma Df^{-1} J_YDf+Df^{-1} A Df+Df^{-1}A D\sigma=0.\] We now see if the vector $(J_X^f\sigma, J_YA)$ with $J_X^f=J_Y$ satisfies this relation. Notice that $J_X^f=J_Y$ implies 
	\[J_X Df^{-1}=Df^{-1}J_Y.\] So
	\begin{align*}
	&-Df^{-1} DJ_X^f\sigma Df^{-1} J_YDf+Df^{-1} J_Y ADf+Df^{-1}J_Y DJ_X^f\sigma\\
	&=-J_XDf^{-1} D\sigma Df^{-1} J_YDf+J_XDf^{-1}  ADf+Df^{-1}J_X^f J_YD\sigma\\
	&=-J_XDf^{-1} D\sigma Df^{-1} J_YDf+J_XDf^{-1}  ADf+J_XDf^{-1} J_YD\sigma\\
	=&J_X (-Df^{-1} D\sigma Df^{-1} J_YDf+Df^{-1} A Df+Df^{-1}A D\sigma)\\
	=&0.
	\end{align*}
	Also, we need to show that the map $\pi: \cW\rightarrow \map(X,Y)$ is injective, holomorphic and the image is $\Kmap(X,Y)$. The  injectivity is obvious as if $\pi(f, J_Y)=\pi(f',J_Y')$, then $f=f'$. When $f=f'$, $J_X^f=J_X^{f'}$. By the definition, $J_Y\in \cJ(X,\omega_X)$, hence it is $\Kmap_{\omega_X}(X,Y)$. Finally it is holomorphic as this is the restriction of the projection map $\pi:\map(X,Y)\times \cJ(Y,\omega_Y)\rightarrow \map(X,Y)$ which is holomorphic. \\
	
	Notice that $\Kmap(X,Y;p)^+$ is an open subset of $\Kmap$. As $\omega_X^{n-p}\wedge\omega_Y^p$ is $J$ invariant for $(J_X,J_Y)\in \cJ(X,\omega_X)\times\cJ(Y,\omega_Y)$, so this is a complex submanifold.
\end{proof}
\begin{rema}
Using the same argument, we can prove that $\Kmap(X,Y)$ is a complex submanifold of $(\map(X,Y), J_Y)$ as well.  
\end{rema}

\begin{defi}
	Let $(X,\omega_X,J_X)$, $(Y,\omega_Y,J_Y)$ be compact K{\"a}hler manifolds. Then we define 
	\[\Kmap_{\omega_Y}(X,Y;J_X):=\{f\in \diff(X,Y)| J_X^{f}=Df J_X Df^{-1}\in\cJ(Y,\omega_Y)\}.\]
	We denote $\Kmap(X,X)=\Kmap_{\omega_X}(X,X,J_X)$ if there is no confusion on the choice of the K{\"a}hler form.  Then we define 
	\[\cX_{f,p}^+:=(\Kmap(X,X)\times \Kmap(Y,Y))\cap F_f^{-1}(\map(X,Y;p)^+).\] 
	Moreover, let $(v,w)\in T_{(\varphi,\psi)}(\map(X,X)\times\map(Y,Y))$ we also define an almost complex structure on $\map(X,X)\times\map(Y,Y)$ by
	\[J_{\map}(v,w):=(J_{X}^{\varphi}v,J_Y^{\psi}w).\]
\end{defi}

We now prove the main proposition in this section:
\begin{prop}\label{prop:  pseudo moment map}
	Let $f:X\rightarrow Y$ is a biholomorphism,	and define an action  $\ham(X,\omega_X)\times \ham(Y,\omega_Y)$ on $\map(X,X)\times \map(Y,Y)$ which is given by 
	\[(\sigma,\eta)\cdot (\varphi,\psi):=(\sigma\circ \varphi,\eta\circ\psi). \] Then
	\begin{enumerate}
		\item $F_f$ commutes with the group action.
		\item $\cX_{f,p}^+$ is a complex submanifold, 
		\item The action $\ham(X,\omega_X)\times \ham(Y,\omega_Y)$ is closed in $\cX_{f,p}^+$.
		\item  $F_f^*\Omega_p$ is  $J_{\map}$ invariant.
	\end{enumerate}
\end{prop}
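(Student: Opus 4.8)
The plan is to dispatch (1)--(3) by short direct arguments and to put the real effort into (4).

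For (1) one just unwinds the two actions: for $(\sigma,\eta)\in\ham(X,\omega_X)\times\ham(Y,\omega_Y)$,
\[
F_f\big((\sigma,\eta)\cdot(\varphi,\psi)\big)=(\eta\circ\psi)\circ f\circ(\sigma\circ\varphi)^{-1}=\eta\circ\big(\psi\circ f\circ\varphi^{-1}\big)\circ\sigma^{-1}=(\sigma,\eta)\cdot F_f(\varphi,\psi),
\]
since $H$ acts on $\map(X,Y)$ by $(\sigma,\eta)\cdot g=\eta\circ g\circ\sigma^{-1}$. For (2), Lemma~\ref{lem: complex structure of Kmap} (applied once to the pair $(X,X)$ and once to $(Y,Y)$) says $\Kmap(X,X)$ and $\Kmap(Y,Y)$ are complex submanifolds, carrying the complex structures $\delta\varphi\mapsto J_X^{\varphi}\delta\varphi$ and $\delta\psi\mapsto J_Y^{\psi}\delta\psi$; hence their product is a complex submanifold of $\map(X,X)\times\map(Y,Y)$ whose complex structure is precisely $J_{\map}$. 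Since $\map(X,Y;p)^+$ is open in $\map(X,Y)$ and $F_f$ is smooth, $F_f^{-1}(\map(X,Y;p)^+)$ is open, so $\cX_{f,p}^+$, being the intersection of this open set with the complex submanifold $\Kmap(X,X)\times\Kmap(Y,Y)$, is a complex submanifold.

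For (3) I would first check that the action preserves $\Kmap(X,X)\times\Kmap(Y,Y)$. If $\sigma\in\ham(X,\omega_X)$ then $\sigma^{*}\omega_X=\omega_X$, and $J_X^{\sigma\circ\varphi}=D\sigma\circ J_X^{\varphi}\circ D\sigma^{-1}$ is integrable (push-forward of the integrable $J_X^{\varphi}$); the identity $\sigma^{*}\omega_X=\omega_X$ together with $J_X^{\varphi}\in\cJ(X,\omega_X)$ yields at once $\omega_X(J_X^{\sigma\varphi}\,\cdot\,,J_X^{\sigma\varphi}\,\cdot\,)=\omega_X$ and $\omega_X(J_X^{\sigma\varphi}\,\cdot\,,\,\cdot\,)>0$, so $\sigma\circ\varphi\in\Kmap(X,X)$, and likewise $\eta\circ\psi\in\Kmap(Y,Y)$. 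Next, by (1) and Lemma~\ref{lem: closedness of positivity}, $F_f((\sigma,\eta)\cdot(\varphi,\psi))=(\sigma,\eta)\cdot F_f(\varphi,\psi)\in\map(X,Y;p)^+$, so $F_f^{-1}(\map(X,Y;p)^+)$ is preserved as well; hence $H$ preserves $\cX_{f,p}^+$.

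The content is in (4). Fix $(\varphi,\psi)$ with $\psi\in\Kmap(Y,Y)$ and set $g:=F_f(\varphi,\psi)=\psi\circ f\circ\varphi^{-1}\in\map(X,Y;p)^+$. Differentiating $t\mapsto\psi_t\circ f\circ\varphi_t^{-1}$ along $(v,w)\in T_{(\varphi,\psi)}$ gives, as a section of $g^{*}TY$,
\[
DF_f(v,w)\big|_{x}=w\big|_{f\varphi^{-1}(x)}-D\psi\,Df\,D\varphi^{-1}\,v\big|_{\varphi^{-1}(x)}\ \in\ T_{g(x)}Y .
\]
The key step is to observe that $DF_f$ intertwines $J_{\map}$ with the fibrewise complex structure $\mathcal J$ on $g^{*}TY$ defined by $\mathcal J|_{x}:=J_Y^{\psi}\big|_{g(x)}$. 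On the $w$-term this is immediate from the definition of $J_Y^{\psi}$; on the $v$-term,
\[
D\psi\,Df\,D\varphi^{-1}\,(J_X^{\varphi}v)=D\psi\,Df\,J_X\,(D\varphi^{-1}v)=D\psi\,J_Y\,Df\,(D\varphi^{-1}v)=\mathcal J\,\big(D\psi\,Df\,D\varphi^{-1}v\big),
\]
using $D\varphi^{-1}J_X^{\varphi}=J_X D\varphi^{-1}$ for the first equality and the hypothesis that $f$ is a biholomorphism, $Df\circ J_X=J_Y\circ Df$, for the middle one. Since $\psi\in\Kmap(Y,Y)$ means $J_Y^{\psi}\in\cJ(Y,\omega_Y)$, the form $\omega_Y$ is $J_Y^{\psi}$-invariant, so pointwise on $X$
\[
\omega_Y\big(DF_f(J_{\map}V),DF_f(J_{\map}W)\big)=\omega_Y\big(\mathcal J\,DF_fV,\mathcal J\,DF_fW\big)=\omega_Y\big(DF_fV,DF_fW\big);
\]
integrating this identity against $\tfrac{1}{(n-p)!\,p!}\,\omega_X^{n-p}\wedge g^{*}\omega_Y^{p}$ gives $(F_f^{*}\Omega_p)(J_{\map}V,J_{\map}W)=(F_f^{*}\Omega_p)(V,W)$, which is (4). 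The only genuine obstacle is this last part, and within it the single idea is small: one must get the variational formula for $DF_f$ right and notice that ``$f$ holomorphic'' is exactly what turns $DF_f\circ J_{\map}$ into $\mathcal J\circ DF_f$, after which the defining property of $\Kmap(Y,Y)$ finishes the argument; parts (1)--(3) are bookkeeping.
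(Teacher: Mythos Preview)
Your proof is correct and follows essentially the same route as the paper's. The only notable difference is in (2): where the paper re-establishes smoothness of $\Kmap(X,X)$ via the implicit function theorem and then verifies $J$-invariance by a direct tangent-space computation, you simply invoke Lemma~\ref{lem: complex structure of Kmap} for the pairs $(X,X)$ and $(Y,Y)$ and take the product, which is cleaner; for (4) your argument---computing $DF_f$, using $D\varphi^{-1}J_X^{\varphi}=J_XD\varphi^{-1}$, the biholomorphy $Df\,J_X=J_Y\,Df$, then $D\psi\,J_Y=J_Y^{\psi}D\psi$ and finally $J_Y^{\psi}$-invariance of $\omega_Y$---is exactly the chain of equalities the paper writes out.
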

\begin{proof}
	\begin{enumerate}
		\item $F_f(\sigma\circ \varphi,\eta\circ\psi)=\eta\circ\psi \circ f\circ\varphi^{-1}\sigma^{-1}=(\sigma,\eta)\cdot F_f(\varphi,\psi)$.
		\item To show $\cX_{f,p}^+$ is a smooth manifold, we only need to show $\Kmap(X,X)$ is smooth. As $F_f$ is continuous, so it implies $\cX_{f,p}^+$ is an open subset, hence it is smooth. \\
		
		Define  $G:\cJ(X,\omega_X)\times \Kmap(X,X)\rightarrow \End(\Gamma(TX))$ by 
		\[G(J,\varphi)=J_X^{\varphi}-J.\] Then $\Kmap(X,X)\cong\{(J,\varphi)|G(J,\varphi)=0.\}$ Also, let $A\in T_{J,\varphi}\cJ(X,\omega_X)$,  then
		\[DG_{J,\varphi}(A,0)=-Id,\] hence the implicit function theorem implies that there exists $H:U \subset \Kmap(X,X)\rightarrow V\subset \cJ(X,\omega_X)$ which for $G:U\times V\rightarrow \End(\Gamma(TX))$, we have 
		\[G(J,\varphi)=G(H(\varphi),\varphi)=0.\] Therefore, $(U,H)$ gives a local coordinate, which implies $\Kmap(X,X)$ is smooth. Hence $\Kmap(Y,Y)$ is also smooth, and thus $\cX_{f,p}^+$ is smooth.\\
		
		We now show $\cX_{f,p}^+$ is $J$ invariant. Again, as $F_f^{-1}(\map(X,Y;p))$ is open, and $\Kmap(X,X)$ and $\Kmap(Y,Y)$ have the same defining function, we only need to show $\Kmap(X,X)$ is $J_X^{\varphi}$ invariant. Then the argument can be used as showing $\Kmap(Y,Y)$ is also $J_Y^{\psi}$ invariant.  Suppose $\sigma\in T_{\varphi}\Kmap(X,X)$.  The equation we have is the following: for all $(v,w)\in T_xX$ 
		\[\varphi^*\omega_X(J_X v, J_Xw )=\varphi^*\omega_X(v, w).\]
		Differentiating it  along $v$, we get
		\[\omega_X(D\sigma J_X v,  D\varphi J_Xw)+\omega_X(D\varphi J_Xv, D\sigma J_X w)=\omega_X(D\sigma v, D\varphi w)+\omega_X(D\varphi v, D\sigma w).\]
		Now 
		\begin{align*}
		&\omega_X( J_X^{\varphi}D\sigma J_Xv, D\varphi J_Xw)+\omega_X(D\varphi J_Xv, J_X^{\varphi}D\sigma J_Xw)-\omega_X(J_X^{\varphi}D\sigma v,D\varphi w)-\omega_X(D\varphi v, J_X^{\varphi}D\sigma w)\\
		=&\omega_X( D\sigma J_Xv, J_X^{\varphi}D\varphi J_Xw)+\omega_X(J_X^{\varphi}D\varphi J_Xv, D\sigma J_Xw)-\omega_X(D\sigma v,J_X^{\varphi}D\varphi w)-\omega_X(J_X^{\varphi}D\varphi v,D\sigma w)\\
		=&\omega_X( D\sigma J_Xv, D\varphi J_X  J_Xw)+\omega_X( D\varphi J_X  J_Xv, D\sigma J_Xw)-\omega_X(D\sigma v, D\varphi J_X  w)-\omega_X( D\varphi J_X  v,D\sigma w)
		\end{align*}
		We let $u=J_Xw$, then $w=-J_Xu$, hence the expression becomes 
		\begin{align*}
		&\omega_X( D\sigma J_Xv, D\varphi J_X  u)-\omega_X( D\varphi v, D\sigma u)-\omega_X(D\sigma v, D\varphi u)+\omega_X( D\varphi J_X  v,D\sigma J_Xu)\\
		=&\omega_X( D\sigma J_Xv, D\varphi J_X  u)+\omega_X( D\varphi J_X  v,D\sigma J_Xu)-\omega_X( D\varphi v, D\sigma u)-\omega_X(D\sigma v, D\varphi u)=0.
		\end{align*}
		\item As $F_f$ preserves the group action,  we only need to show $\Kmap(X,X)\times \Kmap(Y,Y)$ is closed under the action. Let $(\sigma,\eta)\in \ham(X,\omega_X)\times \ham(Y,\omega_Y)$. Then 
		\[(\sigma\circ \varphi)^*\omega_X=\varphi^*\sigma^*\omega_X=\varphi^*\omega_X,\] hence $(\sigma\circ \varphi)^*\omega_X$ is $J_X$-invariant. Similarly, $(\eta \circ\psi)^*\omega_Y$ is $J_Y$-invariant.
		\item  For $(v,w), (v',w')\in T_{\varphi,\psi}\Kmap(X,X)\times \Kmap(Y,Y)$, 
		\[D{F_p}_{\varphi,\psi}(v,w)|_x=w|_{f\circ \varphi^{-1}(x)}-D\psi Df D\varphi^{-1} v|_{\varphi^{-1}(x)},\] so
		\begin{align*}
		& F_p^*\Omega_p(J_{\map}(v,w),J_{\map}(v',w')) &\\
		=& F_p^*\Omega_p(J_X^{\varphi}v, J_Y^{\psi}w),(J_X^{\varphi}v', J_Y^{\psi}w') &\\
		=&
		\Omega_p(J_Y^{\psi}w-D\psi Df D\varphi^{-1} J_X^{\varphi}v, J_Y^{\psi}w'-D\psi Df D\varphi^{-1} J_X^{\varphi}v') &\\
		=& \Omega_p(J_Y^{\psi}w-D\psi Df J_XD\varphi^{-1} v, J_Y^{\psi}w'-D\psi Df J_XD\varphi^{-1} v') & (\because D\varphi^{-1} J_X^{\varphi}=J_X D\varphi)\\
		=& \Omega_p(J_Y^{\psi}w-D\psi J_YDf D\varphi^{-1} v, J_Y^{\psi}w'-D\psi J_YDf D\varphi^{-1} v') & (\because J_Y Df=Df J_X)\\
		=& \Omega_p(J_Y^{\psi}(w-D\psi Df D\varphi^{-1} v), J_Y^{\psi}(w'-D\psi Df D\varphi^{-1} v')) & (\because D\psi J_Y=J_Y^{\psi} D\psi)\\
		=& \int_X \omega_Y(J_Y^{\psi}(w-D\psi Df D\varphi^{-1} v), J_Y^{\psi}(w'-D\psi Df D\varphi^{-1} v'))\omega_X^{n-p}\wedge\omega_Y^p & \\
		=& \int_X \omega_Y((w-D\psi Df D\varphi^{-1} v), (w'-D\psi Df D\varphi^{-1} v'))\omega_X^{n-p}\wedge\omega_Y^p & ( \because J_Y^{\psi}\in \cJ(Y,\omega_Y))\\
		=&  F_p^*\Omega_p((v,w),(v',w')).
		\end{align*} 
		Hence it is $J_{\map}$-invariant. 
	\end{enumerate}
\end{proof}
As $\cX_{f,p}^+$ is a complex manifold, we observe that if $(v,w)\in T_{(\varphi,\psi)}(\cX_{f,p}^+)$, then  $J_{\map}(v,w)=(J_X^{\varphi}v,J_Y^{\psi}w)\in T_{(\varphi,\psi)}(\cX_{f,p}^+)$. As $\omega_Y(J_Y^{\psi}u,u)>0$ if $u\neq 0$, so for any $(v,w)$, if 
\[w-D\psi Df D\varphi^{-1}v\neq 0,\] then we can choose 
\[(v',w')=-J_{\map}(v,w).\] However, if $w=D\psi Df D\varphi^{-1}v$, then it is degenerate. Indeed, the problem is $F_f$ may not be injective. Indeed, if we consider $X=Y$, $\omega_X=\omega_Y$, then $f=id$ solve the problems, but for any $\sigma\in \ham^{\CC}(X)$, $(\sigma, \sigma)$ will solve the equation as well. Therefore, we cannot apply the theory directly.

As  $\cX_{f,p}^+$ is closed under the action of  $\ham(X,\omega_X)\times\ham(Y,\omega_Y)$ and $F_f$ preserves the action, we can still consider the orbit space \[\cO_f:=\ham^{\CC}(X,\omega_X)\times\ham^{\CC}(Y,\omega_Y)\cdot (id,id)\subset (\cX_{f,p}^+)\] (as we mentioned before, $\ham^{\CC}(X,\omega_X)\times\ham^{\CC}(Y,\omega_Y)$ is not a group). Notice that it may not be a manifold, but only a complex variety.

 Indeed, suppose there exists $\sigma\in \ham^{\CC}(X,\omega_X)\cap\ham^{\CC}(Y,\omega_Y)$, and $\omega_X$, $\omega_Y$ solved coupled equation $p$, then $\sigma^*\omega_X$, $\sigma^*f^*\omega_Y$ also solved equation $p$. This example exists, say,
\begin{exam}
Consider $(X,\omega_0,\omega_1)$ with 
\[[\omega_0]=[\omega_1].\] Then by definition, there exists $\sigma\in \ham^{\CC}(X,\omega_0)$ such that 
\[\sigma^*\omega_1=\omega_0.\] Then 
\[\omega_0^{n-p-1}\wedge\sigma^*\omega_1^{p+1}=\omega_0^n; {\ } \sigma_*\omega_0^{n-p}\wedge\omega_1^p=\omega_1^n,\] that is $(id,\sigma)\in \cO_{id}$ solves the equation. Moreover, for any $\eta\in \ham^{\CC}(X,\omega_0)=\sigma^*\ham^{\CC}(X,\omega_1)$, $(\eta,\sigma\eta)\in\cO_{id}$ and
\[\sigma\circ\eta\circ\eta^{-1}=\sigma \] implies it also solves the same moment map equation.  
\end{exam} 
Notice that we can consider the equivalent class, namely, 
\[(\sigma,\eta)\sim (\sigma',\eta')\text{ if }F_f((\sigma,\eta))=F_f((\sigma',\eta')),\] that is,
\[\eta'\circ f\circ \sigma'^{-1}=\eta\circ f\circ \sigma^{-1}.\] 
Notice that we may simply consider $[\cO_{f}]\subset \map(X,Y;p)^+$. Hence we can restrict the moment map into  $[\cO_{f}]$ if it is a manifold.

\begin{coro}
 Let $(X,\omega_X)$	and $(Y,\omega_Y)$ be two K{\"a}hler manifolds with two K{\"a}hler forms, and $f$ is a biholomorphism. Suppose $[\cO_f]$ is a manifold, then
$\displaystyle \mu_{p}:[\cO_f]\rightarrow Lie\left(\prod_{i=0}^k\ham(X_i,\omega_i)\right)^*$ is a moment map. In particular, if $\ham(X,\omega_X)\cap f^*\ham(Y,\omega_Y)={id}$ , then $\mu_p$ is well defined. 
\end{coro}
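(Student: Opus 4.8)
The plan is to descend the moment map picture established in Proposition \ref{prop:  pseudo moment map} from the (degenerate, non-Hausdorff) orbit $\cO_f \subset \cX_{f,p}^+$ to its quotient $[\cO_f] \subset \map(X,Y;p)^+$, and then identify the descended object with the restriction of $\mu_p$. First I would recall that by Proposition \ref{prop:  pseudo moment map} the map $F_f : \map(X,X)\times\map(Y,Y)\to \map(X,Y)$ intertwines the $\ham(X,\omega_X)\times\ham(Y,\omega_Y)$-actions, $\cX_{f,p}^+$ is a complex submanifold closed under the action, and $F_f^*\Omega_p$ is $J_{\map}$-invariant; since $\Omega_p$ itself is $J$-invariant on the complex manifold $\map(X,Y;p)^+$ (it is the restriction of the Kähler form from Theorem \ref{thm: main theorem of moment map}), the pullback $F_f^*\Omega_p$ is a closed $J_{\map}$-invariant two-form on $\cX_{f,p}^+$. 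Its kernel at a point $(\varphi,\psi)$ consists precisely of the vectors $(v,w)$ with $w = D\psi\, Df\, D\varphi^{-1} v$, i.e. of the tangents to the fibers of $F_f$, by the kernel computation in the proof of Proposition \ref{prop:  pseudo moment map}. Hence, when $[\cO_f] = F_f(\cO_f)$ is a manifold (equivalently the $F_f$-fibers through $\cO_f$ are the leaves of a regular foliation), $F_f$ exhibits $\cO_f \to [\cO_f]$ as a submersion with isotropic fibers, and $F_f^*\big(\Omega_p|_{[\cO_f]}\big) = F_f^*\Omega_p|_{\cO_f}$, so $\Omega_p|_{[\cO_f]}$ is the unique two-form whose pullback is $F_f^*\Omega_p|_{\cO_f}$. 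In particular $\Omega_p|_{[\cO_f]}$ is a genuine (non-degenerate) symplectic — indeed Kähler — form on $[\cO_f]$, because along directions transverse to the fibers $F_f^*\Omega_p$ is non-degenerate by the positivity argument following Proposition \ref{prop:  pseudo moment map} (using $\omega_Y(J_Y^\psi u, u) > 0$ for $u \ne 0$).

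Next I would verify the defining moment-map identity directly on $[\cO_f]$. Let $(\varphi,\psi) \in \mathrm{Lie}(\ham(X,\omega_X)\times\ham(Y,\omega_Y))$ with induced vector field $X_{(\varphi,\psi)}$ on $\map(X,Y;p)^+$ and induced vector field $\widehat X_{(\varphi,\psi)}$ on $\cX_{f,p}^+$; since $F_f$ intertwines the actions, $DF_f(\widehat X_{(\varphi,\psi)}) = X_{(\varphi,\psi)}$. For any $v' \in T_g[\cO_f]$ with $g = F_f(\varphi_0,\psi_0)$, pick a lift $v \in T_{(\varphi_0,\psi_0)}\cO_f$ with $DF_f(v) = v'$. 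Then
\[
\big(\iota_{X_{(\varphi,\psi)}}\Omega_p\big)_g(v') = \big(\iota_{X_{(\varphi,\psi)}}\Omega_p\big)_g(DF_f v) = \big(F_f^*\iota_{\widehat X_{(\varphi,\psi)}}\Omega_p\big)(v) = \big(\iota_{\widehat X_{(\varphi,\psi)}} F_f^*\Omega_p\big)(v) = dH_{(\varphi,\psi)}\circ F_f\big|_{(\varphi_0,\psi_0)}(v),
\]
where the last equality is Theorem \ref{thm: main theorem of moment map} applied to the Hamiltonian $H_{(\varphi,\psi)}$ of $\mu_p$ — this is legitimate because the computation in the proof of that theorem factors through $F_f$ (the vector field identities $\tfrac{d}{dt}f_t^*\omega_Y = df^*\iota_v\omega_Y$ used there are exactly what one gets by differentiating along $F_f(\varphi_t,\psi_t)$). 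Since $H_{(\varphi,\psi)}\circ F_f$ depends only on $F_f(\varphi_0,\psi_0) = g$ (the integrand $\omega_X^{n-p-1}\wedge f^*\omega_Y^{p+1}$ etc. are built from $g$ alone), $dH_{(\varphi,\psi)}\circ F_f(v) = dH_{(\varphi,\psi)}(v')$, and $\mu_p|_{[\cO_f]}$ is therefore a moment map. The normalizing constants $c_1, c_2$ are constant along $[\cO_f]$ because $[\cO_f]$ lies in a single orbit of the (connected) complexified group, hence is path-connected, so the remark after Theorem \ref{thm: main theorem of moment map} applies.

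For the final assertion, suppose $\ham(X,\omega_X) \cap f^*\ham(Y,\omega_Y) = \{\id\}$. I would show this forces $F_f$ to be injective on $\cO_f$, so that $[\cO_f]$ is automatically a manifold (the hypothesis of the first part) and no degeneracy occurs. If $F_f(\varphi,\psi) = F_f(\varphi',\psi')$, i.e. $\psi\circ f\circ\varphi^{-1} = \psi'\circ f\circ\varphi'^{-1}$, then $\psi'^{-1}\psi = (f\circ\varphi'^{-1})\circ(f\circ\varphi^{-1})^{-1}$; writing $\sigma := \varphi'\varphi^{-1}$ one gets $\psi'\psi^{-1} = f\circ\sigma\circ f^{-1}$, and since $\psi,\psi' \in \ham^{\CC}(Y,\omega_Y)$-type maps while $\varphi,\varphi'$ are $\ham^{\CC}(X,\omega_X)$-type, one checks $\sigma \in \ham(X,\omega_X)$ and $f\circ\sigma\circ f^{-1} \in \ham(Y,\omega_Y)$, whence $\sigma \in \ham(X,\omega_X)\cap f^*\ham(Y,\omega_Y) = \{\id\}$ and $(\varphi,\psi) = (\varphi',\psi')$; then $\Omega_p|_{[\cO_f]}$ being Kähler and $\mu_p$ being a moment map follow from the first part.

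\textbf{Main obstacle.} The genuinely delicate point is not the moment-map computation (which is essentially Theorem \ref{thm: main theorem of moment map} pulled back along $F_f$) but the smooth/Kähler structure on the quotient $[\cO_f]$: one must ensure $F_f$ restricted to $\cO_f$ has locally constant rank so that $[\cO_f]$ is a smooth Hausdorff manifold and the pushed-forward form is well-defined and non-degenerate. In infinite dimensions this requires care — the "quotient" is really by the equivalence $(\sigma,\eta)\sim(\sigma',\eta')$ iff $F_f(\sigma,\eta) = F_f(\sigma',\eta')$, and controlling this equivalence is exactly where the non-injectivity of $F_f$ (illustrated by the example with $[\omega_0] = [\omega_1]$) bites; the hypothesis $\ham(X,\omega_X)\cap f^*\ham(Y,\omega_Y) = \{\id\}$ is what removes it and makes $[\cO_f] \cong \cO_f$.
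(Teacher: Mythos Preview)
Your proof takes a considerably longer route than the paper's. The paper's argument is two lines: $[\cO_f]$ is, by construction, a subset of $\map(X,Y;p)^+$ that is closed under the $\ham(X,\omega_X)\times\ham(Y,\omega_Y)$-action, and $\mu_p$ is already a moment map on the ambient space by Theorem~\ref{thm: main theorem of moment map}; restricting a moment map to an invariant submanifold (once one is handed such a thing) yields a moment map for the restricted form, and that is all. You instead descend from $\cO_f$ through $F_f$, identifying the kernel of $F_f^*\Omega_p$ with the fiber directions and checking that the Hamiltonian identity pushes down. This is not incorrect, and it does make the K\"ahler structure on $[\cO_f]$ more explicit than the paper bothers to (the paper is silent on non-degeneracy of $\Omega_p|_{[\cO_f]}$), but it is a detour: the ambient moment map already exists on $\map(X,Y;p)^+$, so once $[\cO_f]$ is assumed to be a submanifold there is nothing to verify beyond invariance.

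For the ``in particular'' clause, your injectivity argument has a gap. You set $\sigma = \varphi'\varphi^{-1}$ and then assert ``one checks $\sigma \in \ham(X,\omega_X)$'', but $\varphi,\varphi'$ lie in the \emph{complexified} orbit $\ham_{J_X}^{\CC}(X,\omega_X)\cdot\id$, which (as the paper itself remarks) is not a group; there is no reason for $\varphi'\varphi^{-1}$ to land in the real Hamiltonian group, nor even to be well-defined as an element of $\ham^{\CC}$. The hypothesis $\ham(X,\omega_X)\cap f^*\ham(Y,\omega_Y)=\{\id\}$ constrains only the real intersection, so to force injectivity of $F_f$ on the full complexified orbit you would need either to upgrade the hypothesis to the complexified groups or to supply an argument linking the fiber of $F_f$ through a general point of $\cO_f$ back to the real intersection. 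The paper's proof does not fill this in either, so you are not missing something the paper provides --- but your attempt to justify it does not go through as written.
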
 
\begin{proof}
Notice that $[\cO_f]\subset \map(X,Y;p)^+$, and it is closed under the action. Hence $\mu_p|_{[\cO_f]}$ is well defined.
\end{proof} 
\section{moment map for embedding}
In the previous theory, we always assume $X=Y$ as a same K\"{a}hler manifold, and $f_0=id$. We now provide a case that  $X$ and $Y$ are  not diffeomorphic.  

Let $(X,\omega_X),(Y,\omega_Y)$ be two symplectic manifolds with dimensions $n,m$, where $n\leq m$.  Define $\Emap(X,Y)$ to be the space of embedding maps and 
\[\Emap(X,Y;p)^+:=\{f\in\Emap(X,Y)| \omega_X^{n-p}\wedge f^*\omega_Y^p>0\}.\]
Notice that $f^{-1}$ is well defined on $f(X)$ and for this case, $f_*=f^{-1}$ on $f(X)$. 
Let $Z\subset Y$ be a $k$ dimensional submanifold. Then we denote $\delta_{Z}$ be the $m-k$ current on $Y$, which 
\[\int_Y\delta_Z\wedge\alpha:=\int_Z\alpha\] for all $k$ forms $\alpha$ on $Y$.   
\begin{lemm}	
	Let $(X,\omega_X), (Y,\omega_Y)$ are two symplectic manifolds with finite volume with respect to $\omega_X, \omega_Y$, and let $0\leq p\leq n-1$. Then the moment map 
	$\mu_p:\Emap(X,Y;p)^+\rightarrow Lie(\ham(X,\omega_X)\times\ham(Y,\omega_Y))^*$ is given by \[\mu_p(f)=\left(\dfrac{n}{n-p}\left(c_1\dfrac{\omega_X^n}{n!}-\dfrac{\omega_X^{n-p-1}\wedge f^*\omega_Y^{p+1}}{(n-p-1)!(p+1)!}\right), \dfrac{m}{m-p}\left( c_2\dfrac{\omega_Y^m}{m!}-\delta_{f(X)}\wedge\left(f_*(\omega_X^{n-p}\wedge f^*\omega_Y^p)\right)\right)\right).\]
\end{lemm}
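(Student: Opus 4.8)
The plan is to run the proof of Theorem~\ref{thm: main theorem of moment map} essentially verbatim, the only genuinely new feature being that, when $n<m$, the push-forward $f_*$ of a form on $X$ is no longer a smooth form on $Y$ but a current carried by the embedded submanifold $f(X)$; once this is absorbed into the notation (the $\delta_{f(X)}$ in the statement), the moment-map computation proceeds as before. Note that for $p=0$ we have $\Emap(X,Y;0)^+=\Emap(X,Y)$ (since $\omega_X^n>0$ automatically) and both normalising constants degenerate to $1$, so the content is really in the case $1\le p\le n-1$.

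First I would fix the symplectic data. A tangent vector to $\Emap(X,Y;p)^+$ at $f$ is a section $\delta f\in\Gamma(f^*TY)$, a vector field along the embedding $f$, and I keep the two-form of Definition~\ref{def: map(X,Y;p)^+}, namely $\Omega_p(\delta_1 f,\delta_2 f)=\frac{1}{(n-p)!p!}\int_X\omega_Y(\delta_1 f,\delta_2 f)\,\omega_X^{n-p}\wedge f^*\omega_Y^p$. Since $\omega_Y$ is nondegenerate on all of $TY$ and not merely on $\mathrm{image}(Df)$, the integrand vanishes for every $\delta_2 f$ only if $\delta_1 f\equiv 0$, so $\Omega_p$ is a (weakly nondegenerate) symplectic form on the open set $\Emap(X,Y;p)^+\subset\Emap(X,Y)$, and it is closed because $\omega_Y$ is. Then, exactly as in Lemma~\ref{lem: closedness of positivity}, I would check that the action $(\sigma,\eta)\cdot f=\eta\circ f\circ\sigma^{-1}$ preserves $\Emap(X,Y;p)^+$ and $\Omega_p$, using $\sigma^*\omega_X=\omega_X$ and $\eta^*\omega_Y=\omega_Y$; the induced vector field at $f$ is $X_{(\varphi,\psi)}=\xi_\psi\circ f-f_*\xi_\varphi\in\Gamma(f^*TY)$, with $f_*\xi_\varphi$ legitimate because $Df$ is injective.

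For the moment-map identity, set $H_{(\varphi,\psi)}(f)=\langle\mu_p(f),(\varphi,\psi)\rangle$. The key reduction is that pairing the second slot with $\psi$ becomes an integral over $X$: $\langle\delta_{f(X)}\wedge f_*(\omega_X^{n-p}\wedge f^*\omega_Y^p),\psi\rangle=\int_{f(X)}\psi\,(f^{-1})^*(\omega_X^{n-p}\wedge f^*\omega_Y^p)=\int_X(\psi\circ f)\,\omega_X^{n-p}\wedge f^*\omega_Y^p$, while the $c_2\omega_Y^m/m!$ term is $f$-independent; thus $H_{(\varphi,\psi)}$ is an integral over $X$ and the entire variational computation lives on $X$. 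Along a path $f_t$ with $\dot f_0=\delta f\in\Gamma(f^*TY)$ one has $\frac{d}{dt}\big|_0 f_t^*\omega_Y=d(f^*\iota_{\delta f}\omega_Y)$ (Cartan's formula for vector fields along $f$, valid since only the germ of the isotopy along $f(X)$ enters), which is exact, so Stokes on $X$ lets me move $d$ onto $\varphi$ resp.\ onto $\psi\circ f$, using $d\varphi=\iota_{\xi_\varphi}\omega_X$ and $d(\psi\circ f)=f^*(\iota_{\xi_\psi}\omega_Y)$. The $\varphi$-slot reproduces Theorem~\ref{thm: main theorem of moment map} after one applies the (straightforward) generalisation of Lemma~\ref{lem: weak interior product commute} in which the interior product $\iota_v\beta$ is replaced by an arbitrary one-form $b$: $n\,\iota_{\xi_\varphi}\omega_X\wedge b\wedge\omega_X^{n-1-p}\wedge(f^*\omega_Y)^p=-b(\xi_\varphi)\,\omega_X^{n-p}\wedge(f^*\omega_Y)^p$, taking $b=f^*\iota_{\delta f}\omega_Y$ and using $-(f^*\iota_{\delta f}\omega_Y)(\xi_\varphi)=\omega_Y(f_*\xi_\varphi,\delta f)$; this is what makes the constant $\frac{n}{n-p}$ on the $X$-side correct. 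The $\psi$-slot receives two contributions — from differentiating $\psi\circ f_t$ and from differentiating $(f_t^*\omega_Y)^p$ inside the volume form — which I would combine, working in a Darboux chart for $\omega_Y$ adapted to $T_{f(x)}Y=\mathrm{image}(Df_x)\oplus(\text{an }\omega_Y\text{-complement})$, and then conclude $dH_{(\varphi,\psi)}=\iota_{X_{(\varphi,\psi)}}\Omega_p$; finally $c_1,c_2$ are fixed by the requirement that $\mu_p(f)$ pair to zero with constants, i.e.\ that it land in $Lie(\ham(X,\omega_X)\times\ham(Y,\omega_Y))^*$.

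The main obstacle is precisely the $\psi$-slot: the one-forms $f^*\iota_{\xi_\psi}\omega_Y$ appearing after Stokes are pull-backs of interior products against the \emph{ambient} form $\omega_Y$, not interior products against $\omega_X$ or $f^*\omega_Y$, so Lemma~\ref{lem: weak interior product commute} does not apply to them directly; reconciling the two $\psi$-contributions against $\iota_{X_{(0,\psi)}}\Omega_p$ requires the local computation in the adapted Darboux frame, and it is there that the normalising constant $\frac{m}{m-p}$ — carrying the \emph{ambient} dimension $m$ rather than $n$ — has to be produced from the codimension $2m-2n$. Getting this constant right, and checking that the mixed identity holds after integration over $X$, is the delicate part. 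A secondary, routine point is to verify that $f\mapsto\delta_{f(X)}$, hence $f\mapsto\langle\mu_p(f),(\varphi,\psi)\rangle$, is differentiable and that the only weakly nondegenerate $\Omega_p$ still controls the moment map on the tangent spaces that occur.
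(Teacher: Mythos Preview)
Your overall strategy is correct and matches the paper's: rerun the computation of Theorem~\ref{thm: main theorem of moment map}, with the only new content in the $\psi$-slot because $f$ is now an embedding rather than a diffeomorphism. Where you diverge from the paper is in how you handle that slot.

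You propose to work entirely on $X$ and resolve the two $\psi$-contributions by a local computation in a Darboux frame for $\omega_Y$ adapted to the splitting $T_{f(x)}Y=\mathrm{image}(Df_x)\oplus(\text{complement})$. The paper instead avoids any local frame: it takes the tangent vector $v'\in\Gamma(f^*TY)$, rewrites it as $v\in T_g\map(f(X),Y)$ with $g_t=f_t\circ f^{-1}$, and then \emph{extends} $g_t$ to a family $\hat g_t:Y\to Y$. With this ambient extension in hand, the $\psi$-side computation becomes literally the $\psi$-side of Theorem~\ref{thm: main theorem of moment map} carried out on $Y$, so Lemma~\ref{lem: weak interior product commute} applies directly on the $m$-dimensional manifold $Y$; the coefficient $p/m$ drops out of that lemma (with $n$ replaced by $m$), and combining with the $d\psi(v\circ f)$ term gives the factor $(m-p)/m$. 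One then checks the result is independent of the chosen extension $\hat g_t$, which is immediate since only $f^*\iota_v\omega_Y$ enters the final expression. This is shorter than your adapted-frame argument and explains transparently why the constant is $m/(m-p)$: it is the dimension of the ambient manifold that feeds into Lemma~\ref{lem: weak interior product commute}, not the codimension $2m-2n$ as you suggest. Your route should also work, but the extension trick is the cleaner way to ``produce'' that constant.
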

\begin{proof}
	The proof is basically the same as the proof of theorem \ref{thm: main theorem of moment map}. The main difference is that $\Emap(X,Y)$ and $\map(Y,Y)$ is not a one-one correspondence. However, given $v'\in T_{f}\Emap(X,Y)$, $v'|_x\in T_{f(x)}Y$. Hence, we can still identify it as $v\in T_{g}\map(f(X),Y)$, where $g_t(y):= f_t\circ f^{-1}(y)$. After that, we extend this $g_t$ to $\hat{g_t}:Y\rightarrow Y$. Then the same proof can be applied. 
	\begin{align*}
	 &\left.\dfrac{d}{dt}\right|_{t=0}	 \int_{f_t(X)}\psi(y) \dfrac{{f_{t}}_*(\omega_X^{n-p}\wedge f_t^*\omega_Y^p)}{(n-p)!p!}\\
	 =&  \left.\dfrac{d}{dt}\right|_{t=0} \int_X \psi(f_t(x))\dfrac{\omega_X^{n-p}\wedge f_t^*\iota_v\omega_Y^p}{(n-p)!p!}\\
	 =& \int_X d\psi (v\circ f)  \dfrac{\omega_X^{n-p}\wedge f_t^*\iota_v\omega_Y^p}{(n-p)!p!}+p\int_X \psi(f(x))\dfrac{df^*\iota_v\omega_Y\wedge\omega_X^{n-p}\wedge f_t^*\iota_v\omega_Y^{p-1}}{(n-p)!(p)!}\\
	 =&\int_X \omega_Y(\xi_{\psi}, (v\circ f))  \dfrac{\omega_X^{n-p}\wedge f_t^*\iota_v\omega_Y^p}{(n-p)!p!}-\dfrac{p}{m}\int_{f(X)} \omega_Y(\xi_{\psi}, (v))  \dfrac{\omega_X^{n-p}\wedge f_t^*\iota_v\omega_Y^p}{(n-p)!p!} \\
	 =&\dfrac{m-p}{m}\int_X \omega_Y(\xi_{\psi}, (v\circ f))  \dfrac{\omega_X^{n-p}\wedge f_t^*\iota_v\omega_Y^p}{(n-p)!p!}.
	\end{align*}
	 Notice that this is independent of the choice of extension of $g$ as the term $f_t^*\iota_v\omega_Y$ only depends on $v$ and $f_t$, but not the extension $\hat{v}:=\dot{\hat{g_t}}|_{t=0}$.
\end{proof}
\begin{rema}\label{rem: notation of moment map p}
Notice that as $p$ is fixed, we can take $\Omega_X':=\dfrac{n}{(n-p)}\Omega_X$, $\Omega_Y'=\dfrac{m}{m-p}\Omega_Y$ to remove the leading coefficient. We will denote this moment map as $\mu_p$ from now on.
\end{rema}
\begin{rema}
When $Y$ is compact, given any function $\psi$, 
\[\psi-\dfrac{1}{Vol(Y)}\int_Y \psi \dfrac{\omega_Y^m}{m!}\in Lie(\ham(Y,\omega_Y)).\] However, for the case where $Y$ is non compact, and $\displaystyle\int_Y\dfrac{\omega_Y^m}{m!}=\infty$, we cannot normalized $\psi$. So we need to assume $\displaystyle\int_Y\psi\omega_Y^m=0$.  
\end{rema} 
\begin{rema}
For $p=n$,  the map $\mu_n(f): \Emap(X,Y;p)^+\rightarrow Lie(\ham(Y,\omega_Y);n)^*$ is a moment map. Therefore, we can still get a non-trivial moment map for $p=n$ if this is an embedding. 
\end{rema}

\appendix
\section{Analytic compuation of convexity of $\cM_{\cJ,p}$}
In this section, we will show that the Mabuchi functional $\cM_{\cJ,p}$ is stictly convex along the smooth geodesic $(h_{0,t},\cdots, h_{k,t})$, where the geodesic equation is given by 
\[\ddot{h}_{i,t}-|\nabla \dot{h}_{i,t}|_{\omega_{i,h_i}}^2=0\] 
 for all $0\leq i\leq k$.
As the standard Mabuchi functional $\cM_{\cJ_p}$ is well known to be convex, it suffices to consider $\cM_{p}=\cM_{\cJ,p}-\cM_{\cJ}$. For simplicity, we will only consider the case $k=1$. We also denote $\varphi_{i,t}=\dot{h_{i,t}}$, and   
$\omega^{[k]}:= \dfrac{\omega^k}{k!}$. As 
\[|\nabla \varphi_i|_{\omega_i}^2=\dfrac{\omega_i(X_{\varphi_i}, JX_{\varphi_i})}{\omega_i^{[n]}},\]
The geodesic equation with Lemma \ref{lem: weak interior product commute} implies that 
\[|\nabla \varphi_i|_{\omega_i}^2\omega_i^{n-p}\wedge \alpha^{p}=\sqrt{-1}n\dd \varphi\wedge \dbar \varphi_i\wedge\omega_i^{n-p-1}\wedge \alpha^{p}. \] Hence 
\[\dfrac{n-p}{n}\int_X|\nabla \varphi_i|_{\omega_i}^2\omega_i^{[n-p]}\wedge \alpha^{[p]}=-\int_X \varphi\wedge \ddbar \varphi_i\wedge\omega_i^{[n-p-1]}\wedge \alpha^{[p]}. \] 
As 
\[d\cM_{p}(\varphi_{0,t},\varphi_{1,t}):=\int_X \varphi_{0,t}(\omega_{0,t}^{[n-p-1]}\wedge \omega_{1,t}^{[p+1]}-c_1\omega_{0,t}^{[n]})+\int_X \varphi_{1,t}(\omega_{0,t}^{[n-p]}\wedge \omega_{1,t}^{[p]}-c_2 \omega_{1,t}^{[n]}),\]
\begin{align*}
&\dfrac{d^2}{dt^2}\cM_{p}(h_{0,t}, h_{1,t})\\
=& \int_X \dot{\varphi}_{0,t}(\omega_{0,t}^{[n-p-1]}\wedge \omega_{1,t}^{[p+1]}-c_1\omega_{0,t}^{[n]})+\int_X \dot{\varphi}_{1,t}(\omega_{0,t}^{[n-p]}\wedge \omega_{1,t}^{[p]}-c_2 \omega_{1,t}^{[n]})\\
&+\int_X \varphi_{0,t}(\ddbar \varphi_{0,t}\wedge\omega_{0,t}^{[n-p-2]}\wedge \omega_{1,t}^{[p+1]}-c_1\ddbar \varphi_{0,t}\wedge\omega_{0,t}^{[n-1]})\\
&+\int_X \varphi_{1,t}(\ddbar \varphi_{0,t}\wedge \omega_{0,t}^{[n-p-1]}\wedge \omega_{1,t}^{[p]})+\int_X \varphi_{0,t}(\ddbar \varphi_{1,t}\wedge\omega_{0,t}^{[n-p-1]}\wedge \omega_{1,t}^{[p]})\\
&+\int_X \varphi_{1,t}(\ddbar \varphi_{1,t}\wedge \omega_{0,t}^{[n-p]}\wedge \omega_{1,t}^{[p-1]}-c_2\ddbar \varphi_{1,t}\wedge \omega_{1,t}^{[n-1]})\\
=& \int_X \dot{\varphi}_{0,t}(\omega_{0,t}^{[n-p-1]}\wedge \omega_{1,t}^{[p+1]}-c_1\omega_{0,t}^{[n]})+\int_X \dot{\varphi}_{1,t}(\omega_{0,t}^{[n-p]}\wedge \omega_{1,t}^{[p]}-c_2 \omega_{1,t}^{[n]})\\
&-\int_X |\nabla \varphi_{0,t}|_{\omega_{0,t}}^2(c_1\omega_{0,t}^{[n]}-\dfrac{n-p-1}{n}\omega_{0,t}^{[n-p-1]}\wedge \omega_{1,t}^{[p+1]})\\
&-\int_X |\nabla\varphi_{1,t}|_{\omega_{1,t}}^2( \dfrac{p}{n}\omega_{0,t}^{[n-p]}\wedge \omega_{1,t}^{[p]}-c_2 \omega_{1,t}^{[n]})\\
&+\int_X \varphi_{1,t}(\ddbar \varphi_{0,t}\wedge \omega_{0,t}^{[n-p-1]}\wedge \omega_{1,t}^{[p]})+\int_X \varphi_{0,t}(\ddbar \varphi_{1,t}\wedge\omega_{0,t}^{[n-p-1]}\wedge \omega_{1,t}^{[p]})\\
=&\dfrac{p+1}{n}\int_X |\nabla \varphi_{0,t}|_{\omega_{0,t}}^2\omega_{0,t}^{[n-p-1]}\wedge \omega_{1,t}^{[p+1]}+\dfrac{n-p}{n}\int_X |\nabla \varphi_{1,t}|_{\omega_{1,t}}^2 \omega_{0,t}^{[n-p]}\wedge \omega_{1,t}^{[p]}\\
&+\int_X \varphi_{1,t}(\ddbar \varphi_{0,t}\wedge \omega_{0,t}^{[n-p-1]}\wedge \omega_{1,t}^{[p]})+\int_X \varphi_{0,t}(\ddbar \varphi_{1,t}\wedge\omega_{0,t}^{[n-p-1]}\wedge \omega_{1,t}^{[p]})\\
=&\dfrac{(p+1)\sqrt{-1}}{n-p-1}\int_X\dd \varphi_{0,t}\wedge \dbar  \varphi_{0,t} \wedge \omega_{0,t}^{[n-p-2]}\wedge \omega_{1,t}^{[p+1]}+\dfrac{\sqrt{-1}(n-p)}{p}\int_X \dd \varphi_{1,t} \wedge \dbar \varphi_{1,t} \wedge \omega_{0,t}^{[n-p]}\wedge \omega_{1,t}^{[p-1]}\\
&-\sqrt{-1}(\int_X \dd\varphi_{1,t}\wedge \dbar \varphi_{0,t}\wedge \omega_{0,t}^{[n-p-1]}\wedge \omega_{1,t}^{[p]}+\int_X \dd\varphi_{0,t}\wedge \dbar \varphi_{1,t}\wedge\omega_{0,t}^{[n-p-1]}\wedge \omega_{1,t}^{[p]})
\end{align*}
Using the same proof as in Lemma \ref{lem: weak interior product commute}, and 
\[\sqrt{-1} \dd \varphi_{i,t}\wedge \dbar \varphi_{j,t}=d \varphi_{i,t}\wedge d^c\varphi_{j,t}=\omega_{i,t}(X_{\varphi_{i,t}}, \bullet)\wedge \omega_{j,t}(-JX_{\varphi_{j,t}},\bullet),\] the expression becomes
\begin{align*}
&\dfrac{1}{(n-p-1)!p!}\int_X \omega_{0,t}(X_{\varphi_{0,t}}, \bullet)\wedge \omega_{0,t}(-JX_{\varphi_{0,t}}, \bullet)\wedge \omega_{0,t}^{n-p-2}\wedge \omega_{1,t}^{p+1}\\
&+\dfrac{1}{(n-p-1)!p!}\int_X \omega_{1,t}(X_{\varphi_{1,t}}, \bullet)\wedge \omega_{1,t}(-JX_{\varphi_{1,t}}, \bullet)\wedge \omega_{0,t}^{n-p}\wedge \omega_{1,t}^{p-1}\\
&-\dfrac{2}{(n-p-1)!p!}\int_X \omega_{1,t}(X_{\varphi_{1,t}}, \bullet)\wedge \omega_{0,t}(-JX_{\varphi_{0,t}}, \bullet)\wedge \omega_{0,t}^{n-p-1}\wedge \omega_{1,t}^p\\
=&\dfrac{1}{(n-p-1)!p!}\int_X \omega_{0,t}(X_{\varphi_{0,t}}, JX_{\varphi_{0,t}})\wedge \omega_{1,t}\wedge \wedge \omega_{0,t}^{n-p-1}\wedge \omega_{1,t}^p\\
&+\dfrac{1}{(n-p-1)!p!}\int_X \omega_{1,t}(X_{\varphi_{1,t}}, JX_{\varphi_{1,t}})\wedge  \omega_{0,t}^{n-p-1}\wedge \omega_{1,t}^p\\
&-\dfrac{2}{(n-p-1)!p!}\int_X \omega_{1,t}(X_{\varphi_{1,t}}, JX_{\varphi_{0,t}})\wedge \omega_{0,t}^{n-p}\wedge \omega_{1,t}^p.
\end{align*}
We claim that if $\alpha, \beta$ are two forms, then 
\[\iota_{v}\iota_{w}\alpha\wedge\beta= \alpha\wedge \iota_{v}\iota_{w}\beta=\iota_{v}\iota_{w}\beta\wedge \alpha.\] With this claim, and 
\[\omega_{1,t}(X_{\varphi_{1,t}}, JX_{\varphi_{0,t}})=\omega_{1,t}(X_{\varphi_{0,t}}, JX_{\varphi_{1,t}}),\] the expression becomes
\begin{align*}
&\dfrac{1}{(n-p-1)!p!}\int_X \omega_{1,t}(X_{\varphi_{0,t}}, JX_{\varphi_{0,t}})\wedge \omega_{0,t}\wedge \wedge \omega_{0,t}^{n-p-1}\wedge \omega_{1,t}^p\\
&+\dfrac{1}{(n-p-1)!p!}\int_X \omega_{1,t}(X_{\varphi_{1,t}}, JX_{\varphi_{1,t}})\wedge  \omega_{0,t}^{n-p-1}\wedge \omega_{1,t}^p\\
&-\dfrac{2}{(n-p-1)!p!}\int_X \omega_{1,t}(X_{\varphi_{1,t}}, \bullet)\wedge \omega_{0,t}(-JX_{\varphi_{0,t}}, \bullet)\wedge \omega_{0,t}^{n-p-1}\wedge \omega_{1,t}^p\\
=&\dfrac{1}{(n-p-1)!p!}\int_X \omega_{1,t}(X_{\varphi_{0,t}}-X_{\varphi_{1,t}}, J(X_{\varphi_{0,t}}-X_{\varphi_{1,t}}))  \omega_{0,t}^{n-p}\wedge \omega_{1,t}^p
\end{align*}

We finally show the claim. 
To show that, observe that 
\[(\iota_v\iota_w \alpha) \wedge \beta=\iota_v (\iota_w \alpha\wedge \beta)+\iota_w\alpha \wedge \iota_v\beta=\iota_v ((\iota_w \alpha)\wedge \beta)+\iota_w(\alpha \wedge (\iota_v\beta))- \alpha\wedge \iota_w\iota_v\beta.\] Therefore, 
\[(\iota_v\iota_w \alpha) \wedge \beta-\alpha\wedge \iota_v\iota_{w}\beta=\iota_v ((\iota_w \alpha)\wedge \beta)+\iota_w(\alpha \wedge (\iota_v\beta)).\] Also, 
\[(\iota_v\iota_w \alpha) \wedge \beta-\alpha\wedge \iota_v\iota_{w}\beta=(\iota_w\iota_v \beta) \wedge \alpha-\beta\wedge \iota_w\iota_{v}\alpha=\iota_w ((\iota_v \beta)\wedge \alpha)+\iota_v(\beta \wedge (\iota_w\alpha)). \] as 
\[\iota_w ((\iota_v \beta)\wedge \alpha)+\iota_v(\beta \wedge (\iota_w\alpha))=-\iota_w (\alpha\wedge (\iota_v\beta) )-\iota_v( (\iota_w\alpha)\wedge \beta ),\] 
\[2((\iota_v\iota_w \alpha) \wedge \beta-\alpha\wedge \iota_v\iota_{w}\beta)=((\iota_v\iota_w \alpha) \wedge \beta-\alpha\wedge \iota_v\iota_{w}\beta)+(\iota_w\iota_v \beta) \wedge \alpha-\beta\wedge \iota_w\iota_{v}\alpha=0.\] 

\begin{rema}
We could point out that from this definition, we can see that $\cM_{p}$ is not strictly convex when $X_{\varphi}=X_{\phi}$. Hence we can see that $\mu_p$ is not a moment map unless we mod out this relation. 
\end{rema}

\begin{bibdiv}
\begin{biblist}

\bib{AB8313}{article}{
	title={The Yang-Mills equations oveouldr Riemann surfaces},
	author={Atiyah, M.},
	author={Bott, R.}
	journal={Philosophical Transactions of the Royal Society of London. Series A, Mathematical and Physical Sciences},
	year={1983},
	volume={308},
	pages={523 - 615}
}

\bib{AGG13}{article}{
	author={Alvarez-Consul, L.}
	author={Garcia-Fernandez, M.}
	author={Garcia-Prada, O.},
	title={Coupled equations for K{\"a}hler metrics and Yang-Mills connections},
	journal={Geom. Topol. },
	volume={17},
	date={2013},
	issue={5},
	pages={2731-2812},
	DOI={ 10.2140/gt.2013.17.2731},
}

\bib{Aub76}{article}{
   author={Aubin, T.},
   title={\'Equations du type Monge-Amp\`ere sur les vari\'et\'es k{\"a}hleriennes
   compactes},
   journal={C. R. Acad. Sci. Paris S\'er. A-B},
   volume={283},
   date={1976},
   number={3},
   pages={Aiii, A119--A121},
   review={\MR{0433520}},
}

\bib{CC18}{article}{
	author={Chen, X.X.},
author={Cheng, J.},
title={On the constant scalar curvature K{\"a}hler metrics, existence results},
 journal={	arXiv:1801.00656},
 date={2018}
}
\bib{CXY17}{article}{
author = {Collins,  Tristan}, 
author= {Xie, Dan},  
author={Yau, Shing-Tung},
year = {2017},
month = {12},
title = {The deformed Hermitian-Yang-Mills equation in geometry and physics},
journal={ arXiv:1712.00893},
}

\bib{Dar14}{article}{
 ISSN = {00029939, 10886826},
 URL = {http://www.jstor.org/stable/23810518},
 author = {Darvas, Tam\'{a}s},
 journal = {Proceedings of the American Mathematical Society},
 number = {8},
 pages = {2775--2782},
 publisher = {American Mathematical Society},
 title = {Morse theory and geodesics in the space of K\"{a}hler metrics},
 volume = {142},
 year = {2014}
}

 \bib{DaPi19}{article}{
	author={Datar, V. V. }
	author={ Pingali, V. P.  },
	title={On coupled constant scalar curvature K{\"a}hler metrics},
	journal={Journal of Symplectic Geometry},
	volume={18},
	number={4},
	page={961-994},
 	date={2020},} 

\bib{DaSm02}{article}
{
  author = {Daurtseva, N. A.},
author= {Smolentsev, N. K.},
  title = {On the space of almost complex structures},
   journal= {arXiv:math/0202139},
   year = {2002},
}

\bib{DoKr00}{book}{
author={Donaldson, S. K.},
author={Kronheimer,P.B.},
title={The geometry of four-manifolds},
publisher={Clarendon Press, Oxford ; Oxford University Press, New York},
date={1990}}

\bib{Don97}{article}{
author={Donaldson, S.K.},
title={Remarks on gauge theory, complex geometry and fourmanifold topology},
journal={Fields Medallists' Lectures},
page={384-403},
series={World Scientic},
volume={5},
year={1997} 
}

\bib{Don00}{article}{
   author={Donaldson, S. K.},
   title={Moment maps and diffeomorphisms [ MR1701920 (2001a:53122)]},
   conference={
      title={Surveys in differential geometry},
   },
   book={
      series={Surv. Differ. Geom.},
      volume={7},
      publisher={Int. Press, Somerville, MA},
   },
   date={2000},
   pages={107--127},
   review={\MR{1919424}},
   doi={10.4310/SDG.2002.v7.n1.a5},
}

\bib{Don01}{article}{
author={Donaldson, S. K.},
title={Scalar curvature and projective embeddings, I,},
journal={J. Differential Geom.},
volume={59},
pages={479-522},
year={2001},
}

\bib{Don02}{article}{
   author={Donaldson, S. K.},
   title={Scalar curvature and stability of toric varieties},
   journal={J. Differential Geom.},
   volume={62},
   date={2002},
   number={2},
   pages={289--349},
   issn={0022-040X},
   review={MR{1988506}},
}

\bib{Fuj92}{article}
{
author={Fujiki, A.},
title={Moduli space of polarized algebraic manifolds and K{\"a}hler metrics}, 
journal={Sugaku Expo.},
volume={5},
date={1992},
pages={173–191},
}

 \bib{Fut83}{article}{
author={Futaki, A.},
title={An obstruction to the existence of Einstein-Kahler metrics}, 
journal={Invent. Math.},
volume={73},
date={1983},
pages={437-443},
}

\bib{Gua99}{article}{
   author={Guan, Danial.},
   title={ On modified Mabuchi functional and Mabuchi moduli space of K\"{a}hler
metrics on toric bundles},
   journal={Math. Res. Lett.},
   volume={6},
   date={1999},
   number={5},
   pages={547--555},
doi={https://dx.doi.org/10.4310/MRL.1999.v6.n5.a7},
}

\bib{Gue14}{article}{
author = {Guedj, Vincent},
  title = {The metric completion of the Riemannian space of Kähler metrics},
journal = {	arXiv:1401.7857},
  year = {2014},
}

 \bib{FuMa95}{article}{
	author={Futaki, A.},
	author={Mabuchi, T.}
	title={Bilinear forms and extremal Kahler vector fields associated with Kahler classes}, 
	journal={Math. Ann.},
	volume={301},
	date={1995},
	pages={199-210},
}

    \bib{HuNy16}{article}{
   	author={Hultgren, J.}
   	author={Witt Nystr{\"o}m., D.},
   	title={Coupled K{\"a}hler-Einstein Metrics.},
   	journal={Int. Math. Res. Not.},
   	volume={2019},
   	issue={21},
   	date={2018}
   	Pages={6765-6796},
   	doi={ https://doi.org/10.1093/imrn/rnx298}
   }

\bib{LSW22}{article}{
author = {Lee, King Leung},
  author={Sturm, Jacob},
author={Wang, Xiaowei},
  title = {Moment map, convex function and extremal point},
  
  publisher = {arXiv:2208.03724},
  
  year = {2022},
}

\bib{LeSz15}{article}{
   author={Lejmi, M.},
   author={Sz\'ekelyhidi, G.},
   title={The J-flow and stability},
   journal={Adv. Math.},
   volume={274},
   date={2015},
   pages={404--431},
   issn={0001-8708},
   review={\MR{3318155}},
   doi={10.1016/j.aim.2015.01.012},
}

\bib{Lic57}{article}{
  author={Lichnerowicz,A.},
  title={Sur les transformations analytiques des varietes kahleriennes compactes},
   journal={C. R. Acad. Sci. Paris},
volume={244}
pages={3011-3013},
   date={1957}
}

\bib{Mab86}{article}{
   author={Mabuchi, T.},
   title={K-energy maps integrating Futaki invariants},
   journal={Tohoku Math. J.},
   volume={38},
   date={1986},
   number={4},
   pages={575--593},
}

\bib{Mat57}{article}{
  author={Matsushima, Y.},
  title={Sur la structure du groupe d'homeomorphismes analytiques d'une
certaine variete kahlerienne},
  journal={Nagoya Math. J.},
   volume={11},
  date={1957},
   pages={145--150},
}

\bib{MFK}{book}{
   author={Mumford, D.},
   author={Fogarty, J.},
   author={Kirwan, F.},
   title={Geometric invariant theory},
   series={Ergebnisse der Mathematik und ihrer Grenzgebiete (2) [Results in
   Mathematics and Related Areas (2)]},
   volume={34},
   edition={3},
   publisher={Springer-Verlag, Berlin},
   date={1994},
   pages={xiv+292},
   isbn={3-540-56963-4},
   review={\MR{1304906}},
}

\bib{Odaka2012}{article}{
   author={Odaka, Yuji},
   title={The Calabi conjecture and K-stability},
   journal={Int. Math. Res. Not. IMRN},
   date={2012},
   number={10},
   pages={2272--2288},
   issn={1073-7928},
   review={\MR{2923166}},
}

\bib{PS2004}{article}{
   author={Phong, D. H.},
   author={Sturm, J.},
   title={Scalar curvature, moment maps, and the Deligne pairing},
   journal={Amer. J. Math.},
   volume={126},
   date={2004},
   number={3},
   pages={693--712},
   issn={0002-9327},
   review={\MR{2058389}},
}

\bib{PS2009}{article}{
   author={Phong, D. H.},
   author={Sturm, J.},
   title={Lectures on stability and constant scalar curvature},
   conference={
      title={Handbook of geometric analysis, No. 3},
   },
   book={
      series={Adv. Lect. Math. (ALM)},
      volume={14},
      publisher={Int. Press, Somerville, MA},
   },
   date={2010},
   pages={357--436},
   review={\MR{2743451}},
}

\bib{ScSt19}{article}{
   author={Schlitzer, E.},
author={Stoppa, J.},
  title={Deformed Hermitian Yang-Mills connections, extended gauge group and scalar curvature},
  journal={arXiv:1911.10852},  
date={2019},
}

Deformed Hermitian Yang-Mills connections, extended gauge group and scalar curvature

\bib{Sz10}{article}{
   author={Sz\'{e}kelyhidi, G.},
  title={The K{\"a}hler-Ricci flow and K-polystability},
  journal={American Journal of Mathematics},  
volume={132},
  number={4},
date={2010},
page={1077-1090},
doi={10.1353/ajm.0.0128},
}

\bib{Sz14}{book}{
   author={Sz\'{e}kelyhidi, G.},
   title={An Introduction to Extremal K{\"a}hler Metrics},
   series={
Graduate Studies in Mathematics },
   volume={152},
   publisher={American Mathematical Society},
   date={2014},
   pages={192},
  isbn={978147041047},
}
\bib{Tian2000}{book}{
	author={Tian, G.},
	title={Canonical metrics in K{\"a}hler geometry},
	series={Lectures in Mathematics ETH Z\"{u}rich},
	note={Notes taken by Meike Akveld},
	publisher={Birkh{\"a}user Verlag, Basel},
	date={2000},
	pages={vi+101},
	isbn={3-7643-6194-8},
	review={\MR{1787650}},
	doi={10.1007/978-3-0348-8389-4},
}
\bib{Wang04}{article}{
	author={Wang, X.},
	title={Moment map, Futaki invariant and stability of projective
		manifolds},
	journal={Comm. Anal. Geom.},
	volume={12},
	date={2004},
	number={5},
	pages={1009--1037},
	issn={1019-8385},
	review={\MR{2103309}},
}

\bib{Yau}{article}{
   author={Yau, Shing Tung},
   title={On the Ricci curvature of a compact K{\"a}hler manifold and the
   complex Monge-Amp\`ere equation. I},
   journal={Comm. Pure Appl. Math.},
   volume={31},
   date={1978},
   number={3},
   pages={339--411},
   issn={0010-3640},
   review={\MR{480350}},
}

\end{biblist}
\end{bibdiv}

\end{document}